\newtheorem{Theorem}{Theorem}
\newtheorem{Proposition}[Theorem]{Proposition}
\newtheorem{Lemma}[Theorem]{Lemma}
\newtheorem{Corollary}[Theorem]{Corollary}
\newtheorem{Claim}[Theorem]{Claim}
\theoremstyle{definition}
\newtheorem{Definition}{Definition}
\newtheorem{Remark}{Remark}
\def\S{{\mathbf S}}
\def\FF{{\mathbb F}}
\def\CC{{\mathbb C}}
\def\ZZ{{\mathbb Z}}
\def\QQ{{\mathbb Q}}
\def\RR{{\mathbb R}}
\def\Fp{{\overline{\mathbb F}_p}}
\def\sll{\mathfrak{sl}}
\def\sl2{\sll_2\CC}
\def\SLL{\operatorname{SL}}
\def\SL2{\SLL_2\CC}
\begin{document}

\title{Examples of character varieties in characteristic $p$ and ramification}

\author{Luisa Paoluzzi and Joan Porti\footnote{Partially supported by the 
Spanish Mineco through grant MTM2015--66165--P}} 
\date{\today}

\maketitle

\begin{abstract}
We study $\mathrm{SL}_2(\FF)$-character varieties of knots over algebraically 
closed fields $\FF$. We give a sufficient condition in terms of the double 
branched cover of a $2$-bridge knot (or, equivalently, of its Alexander 
polynomial) on the characteristic of $\FF$, an odd prime, for the 
$\mathrm{SL}_2(\FF)$-character variety to present ramification phenomena. 
Finally we provide several explicit computations of character varieties to 
illustrate the result, exhibiting also other types of ramification. 
\end{abstract}


\section{Introduction}

Character varieties have turned out to be powerful tools in the comprehension 
of $3$-manifolds in general and knots in particular. In the case of knots,
different invariants are related or connected to character varieties. This is,
for instance, the case of the A-polynomial \cite{CooperLongApolynomial}, and 
the algebraic and geometric properties of the excellent component of a 
hyperbolic knot \cite{HLMJKTR, MPL, PetersenReid}. In this paper we are 
interested in a far less explored type of invariants, namely the finite set of 
odd prime numbers $p$ for which the character variety ramifies in
characteristic $p$. 

Recall that for a finitely presented group $\Gamma$ its 
$\mathrm{SL}_2(\CC)$-character variety, noted $X(\Gamma)$, is an algebraic set 
parameterizing, roughly speaking, the conjugacy classes of representations of 
$\Gamma$ into $\mathrm{SL}_2(\CC)$. The algebraic set $X(\Gamma)$ is determined 
by a finite set of polynomial equations with integers coefficients. If $p$ is a 
prime number, one can consider the polynomial equations obtained from the 
previous ones by reducing their coefficients mod $p$. It follows from work by 
Gonz\'alez--Acu\~na and Montesinos--Amilibia \cite{AcunaMontesinos} that these 
new equations define the variety of characters of representations of $\Gamma$ 
into $\mathrm{SL}_2(\FF)$, where $\FF$ is an algebraically closed field of 
characteristic $p$, provided that $p$ is odd.   

It is not hard to see that for almost every $p$ several features of the 
structure of the variety of characters of $\Gamma$ over $\FF$ coincide with
those of $X(\Gamma)$ over $\CC$: this is for instance the case of the number of
irreducible components of the algebraic set and their dimensions. We shall say
that \emph{$p$ ramifies} if there is a discrepancy between the behaviour of 
$X(\Gamma)$ and that of the character variety in characteristic $p$. We are
interested in understanding what types of ramification phenomena can appear in
character varieties of knots (i.e.~when $\Gamma$ is the fundamental group of a
knot exterior) and which primes ramify. 

The existence of two different types of ramification phenomena were pointed out
in previous papers by the authors. Both variations in the dimensions of
irreducible components and in their number can occur: the former phenomenon
appears for character varieties of orbifold structures of certain Montesinos
knots \cite{PP}, and the latter for knots admitting symmetries of order $p$ 
\cite{PP1}. In both cases, the appearance of ramification phenomena is related 
to the elementary fact that a matrix in $\mathrm{SL}_2(\FF)$ has order $p$ if 
and only if it is parabolic (or unitary), i.e.~has trace equal to $2$.

The present paper is devoted to exhibiting examples of yet another type of 
ramification phenomena that we might describe as \emph{order-$1$ ramification} 
in contrast to the ramification observed so far that we might call of 
\emph{order $0$}. 

To make this discussion more concrete, let us consider an explicit example: the
character variety of the figure-eight knot. It is well-known that this 
algebraic set can be defined by the equation $(x-2)(x^2+(1-t^2)x+t^2-1)$ where 
$t$ represents the trace of a meridian, while $x$ is the trace of the product 
of two generators of the group: a meridian and a conjugate of its inverse (see 
for instance \cite[Section~6]{AcunaMontesinos}, with the variables $z=t^2-x$ 
and $y=t^2$). The character variety consists of two $1$-dimensional irreducible 
components: the first one is the component of characters of abelian 
representations, while the second one is Thurston's excellent component 
containing the character of the lift to $\mathrm{SL}_2(\CC)$ of the hyperbolic 
holonomy of the knot. It is easy to convince oneself that, regardless of $p$, 
the variety of $\mathrm{SL}_2(\FF)$ always consists of two $1$-dimensional 
irreducible components. However, although both components are smooth and 
intersect in precisely two points over $\CC$, this is no longer the case for 
all $p$. Indeed, by computing the partial derivatives of the second component 
one has $2x-t^2+1$ and $-2t(x-1)$ and one realises that, for $p=5$, the second 
component is no more smooth at the point $t=0$ and $x=2$: the point is a 
cuspidal point and the two points of intersection between the two components 
collapse to this singular one in characteristic $5$. Observe that if one 
considers $\mathrm{PSL}_2$-characters instead of $\mathrm{SL}_2$-characters, 
that is if we take $T=t^2$ as a coordinate instead of $t$, then the point of 
coordinates $T=0$ and $x=2$ is smooth even in characteristic $5$ on each of the 
components, but the intersection between both components is transverse in 
characteristic zero or $p\neq 5$, and tangent in characteristic $p=5$.

\begin{Definition}
We say that $X(\Gamma)$ has a ramification of \emph{order $1$ type in
characteristic $p$} if in the character variety in characteristic $p$ either 
{ the type of intersection beteen two irreducible components 
changes (in particular, two transverse components become tangent)}, 
or the singularity type of a 
point changes (in particular, a singular point appears on a smooth component).
\end{Definition}

Note that ramifications of order $0$ and $1$ may appear at the same time if,
for instance, an irreducible component splits into two and they meet at a
point.

The following result gives a conceptual explanation of the reason why $5$ is a
ramified prime for the character variety of the figure-eight knot. 

\begin{Theorem}\label{th:bridge}
Let $K$ be a $2$-bridge knot, $\Delta_K(t)$ its Alexander polynomial, and $p$ a
prime. We consider $X(\pi_1(K))$. If $p$ divides $|\Delta_K(-1)|$, that is the
order of the homology of the $2$-fold branched cover of $K$, then $p$ ramifies.
\end{Theorem}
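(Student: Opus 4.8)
The plan is to produce a single point of $X(\pi_1(K))$ — the character of the reducible representation whose meridian has eigenvalues $\pm i$ — at which the intersection of the abelian and non-abelian components is transverse over $\CC$ but degenerates in characteristic $p$ exactly when $p\mid\Delta_K(-1)$. First I would set up the standard coordinates. Writing $\pi_1(K)=\langle a,b\mid wa=bw\rangle$ with $a,b$ meridians, I use $t=\operatorname{tr}\rho(a)=\operatorname{tr}\rho(b)$ and $x=\operatorname{tr}\rho(ab^{-1})$, so that $X(\pi_1(K))$ is the plane curve $\{P(t,x)=0\}$ for some $P\in\ZZ[t,x]$. Abelian representations factor through $H_1(K)=\ZZ$, hence send $ab^{-1}$ to the identity and satisfy $x=2$; thus $P$ factors over $\ZZ$ as $P(t,x)=(x-2)\,Q(t,x)$, with abelian component $\{x=2\}$ and non-abelian component $\{Q=0\}$. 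Since $ab^{-1}$ has exponent sum zero, twisting a representation by the sign character of $H_1(K)$ fixes $x$ and sends $t\mapsto -t$; this symmetry of the character variety forces $P$, and hence $Q$, to be a polynomial in $t^2$ and $x$, so that $Q(t,2)=R(t^2)$ for some $R\in\ZZ[s]$.

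The crux is to identify $R$ with a symmetrised Alexander polynomial. A reducible representation with meridian eigenvalues $\lambda,\lambda^{-1}$ has $t=\lambda+\lambda^{-1}$ and lies on $\{x=2\}$; by the theorem of Burde and de Rham its character also lies on $\{Q=0\}$ (it is a limit of irreducible representations) exactly when $\lambda^2$ is a root of $\Delta_K$. Writing $\mu=\lambda^2$ and $s=\mu+\mu^{-1}=t^2-2$, and using that $\Delta_K$ is symmetric up to units to define $\tilde\Delta_K$ by $\tilde\Delta_K(\mu+\mu^{-1})=\mu^{-g}\Delta_K(\mu)$, where $2g=\deg\Delta_K$, I would prove the identity $Q(t,2)=c\,\tilde\Delta_K(t^2-2)$ for a unit $c$, either through the Riley polynomial of the $2$-bridge presentation or by matching both sides as defining equations of the same finite set of reducible limit characters. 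Evaluating at $t=0$ gives $\mu=-1$, hence $Q(0,2)=c\,\tilde\Delta_K(-2)=\pm\,\Delta_K(-1)$.

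With this identity the theorem follows by comparing characteristics. Over $\CC$ one has $\Delta_K(-1)=\pm|H_1(\Sigma_2(K))|\neq 0$, where $\Sigma_2(K)$ is the $2$-fold branched cover, so $Q(0,2)\neq 0$: the point $(t,x)=(0,2)$ lies on the abelian component but not on the non-abelian one, and the intersection points, being the roots of the $t^2$-polynomial $R$, occur in pairs $\{\pm t_i\}$ with $t_i\neq 0$, where $\{x=2\}$ and $\{Q=0\}$ meet transversally. If instead $p\mid\Delta_K(-1)$, then $R(0)=Q(0,2)\equiv 0\pmod p$, so $(0,2)$ now lies on both components; because $R$ depends only on $t^2$, the value $t=0$ is a root of $R(t^2)$ of even multiplicity at least $2$, which is precisely the intersection multiplicity of $\{x=2\}$ with $\{Q=0\}$ at $(0,2)$. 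Thus a pair of transverse intersection points $\pm t_i$ collapses to the single tangential intersection $(0,2)$: the intersection type between the two components changes, so $p$ ramifies in the sense of order-$1$ ramification.

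I expect the main obstacle to be the identity $Q(t,2)=c\,\tilde\Delta_K(t^2-2)$: one must pass from the abstract Burde–de Rham correspondence, which only matches the \emph{roots} of the two sides, to an equality of polynomials up to a unit, controlling multiplicities and the leading coefficient so that the conclusion descends to characteristic $p$. The cleanest route is probably to compute $Q(t,2)$ directly from the Riley polynomial $\phi(s,u)$ of the $2$-bridge presentation and to recognise $\phi(s,0)$, after the standard change of variables, as the Alexander polynomial.
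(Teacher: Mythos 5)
Your argument is correct in outline but follows a genuinely different route from the paper's. The paper proves Theorem~\ref{th:bridge} by slicing along the \emph{other} coordinate line: it studies the fibre $Y=t^{-1}(0)$, identifies $Y^{ab}(K)$ with the $\frac{|\Delta_K(-1)|-1}{2}$ characters of binary dihedral representations (equivalently the roots of $P(0,x)=\Phi_\alpha(x)$), and uses the congruence $\Phi_\alpha(x)\equiv(x-2)^{(p^r-1)/2}\,\Phi_{\alpha'}(x)^{p^r}\bmod p$ to show that this finite set collapses from $\frac{\alpha-1}{2}$ to $\frac{\alpha'-1}{2}$ points in characteristic $p$ --- a pure counting (order-$0$) discrepancy that needs no transversality statement and no Alexander-polynomial identity. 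You instead slice along $x=2$ and detect an order-$1$ phenomenon, namely a new intersection point of multiplicity at least $2$ appearing at $(0,2)$; this is closer in content to Lemma~\ref{lemma:2bridgeatt0}(3), Remark~\ref{Remark:Discriminant} and Proposition~\ref{p:ram1} than to the paper's actual proof of Theorem~\ref{th:bridge}. Your approach buys a finer conclusion (it sees the tangency, not just the collapse), at the price of the identity $Q(t,2)\doteq\tilde\Delta_K(t^2-2)$, which you rightly single out as the crux.

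Two remarks on that crux. First, for this theorem you only need the single value $Q(0,2)$, not the full polynomial identity, and it is available without invoking Burde--de Rham or Riley: the paper's formula $P(0,x)=\Phi_\alpha(x)$ with $\Phi_\alpha(\lambda+\lambda^{-1})=\frac{\lambda^\alpha-1}{\lambda-1}\,\lambda^{(1-\alpha)/2}$ yields $Q(0,2)=\Phi_\alpha(2)=\alpha=|\Delta_K(-1)|$ upon letting $\lambda\to1$, which is exactly what your argument consumes. Second, your parenthetical claim that the intersections at $\pm t_i$ are transverse over $\CC$ is only guaranteed when the roots of $\Delta_K$ are simple (this is the content of \cite{HPS}, and is precisely the extra hypothesis in Proposition~\ref{p:ram1}); without it you should state the discrepancy more modestly --- $(0,2)$ is not an intersection point of the two components in characteristic $0$ but becomes one of even multiplicity at least $2$ in characteristic $p$ --- which is still a genuine change of the intersection locus and suffices to conclude that $p$ ramifies.
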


The above is a special case of more technical result for whose statement we 
need to introduce some notation first. Let $K$ be a knot and $M_n$ its $n$-fold 
cyclic branched cover. Let $\mu\in\pi_1(K)$ be an element representing a 
meridian of $K$. We have the following exact sequence of groups
$$1\longrightarrow H\longrightarrow \pi_1(K)/<<\mu^4>>\longrightarrow
\ZZ/2\ZZ\longrightarrow 1.$$
where $H$ can be seen as the orbifold fundamental group of the orbifold whose
underlying topological space is $M_2$ and whose singular locus is the lift of
$K$ with order $2$ singularity. The commutator subgroup $H'$ of $H$ is 
characteristic, so we can consider the quotient 
$\Gamma=(\pi_1(K)/<<\mu^4>>)/H'$. 

\begin{Theorem}\label{th:main}
Let $K$ be a knot, $\Delta_K(t)$ its Alexander polynomial, and $p$ a prime. We
consider $X(\Gamma)$, where $\Gamma$ is defined as above. If $p$ divides 
$|\Delta_K(-1)|$, that is the order of $H_1(M_2)$, then $p$ ramifies for 
$X(\Gamma)$.
\end{Theorem}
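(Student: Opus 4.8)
The plan is to exploit the fact that the construction forces $\Gamma$ to be a \emph{finite} group, so that $X(\Gamma)$ is $0$-dimensional, and then to track how its irreducible points degenerate when $p$ divides $N:=|H_1(M_2)|=|\Delta_K(-1)|$. Indeed the middle term $A:=H/H'$ of the sequence is the first orbifold homology of the pair $(M_2,\widetilde K)$, hence finite (as $\Delta_K(-1)\neq 0$) with order divisible by $N$; modding out $H'$ collapses the given extension to a metabelian extension
$$1\longrightarrow A\longrightarrow \Gamma\longrightarrow \ZZ/2\ZZ\longrightarrow 1,$$
in which the deck involution acts on the $H_1(M_2)$-summand of $A$ by $-1$. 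I would locate the points of $X(\Gamma)$ explicitly and compare, via the reduction picture of Gonz\'alez--Acu\~na and Montesinos--Amilibia, the reduced point count and the scheme structure over $\CC$ and over a field $\FF$ of odd characteristic $p\mid N$.

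Next I would classify the irreducible $\mathrm{SL}_2$-characters. Since $A$ is abelian and the quotient $\ZZ/2\ZZ$ inverts its odd part, an irreducible representation restricts to $A$ as $\chi\oplus\chi^{-1}$ for a character $\chi\colon A\to\overline{\FF}^{\,*}$, and is irreducible exactly when $\chi^2\neq 1$; the pair $\{\chi,\chi^{-1}\}$ determines the character, and the order-$4$ meridian is sent to a trace-$0$ element (the analogue of the value $t=0$ in the figure-eight discussion). Thus the irreducible points of $X(\Gamma)$ are indexed by the $\{\chi,\chi^{-1}\}$-orbits of characters of $H_1(M_2)$ with $\chi^2\neq 1$, the sign on the residual $\ZZ/2\ZZ$-summand being pinned down by the meridian condition, and their coordinates are the trace functions $a\mapsto \chi(a)+\chi(a)^{-1}$.

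The arithmetic heart is then the count of such characters. Over $\CC$, since $N$ is odd the only self-inverse character is trivial, so there are exactly $(N-1)/2$ irreducible points; because $|\Gamma|$ is invertible, Maschke's theorem makes the character scheme reduced there, so the generic fibre has length $(N-1)/2$. Over $\FF$, however, characters take values in $\overline{\FF}^{\,*}$, which has no element of order $p$, so every character factors through the prime-to-$p$ quotient $H_1(M_2)'$ of order $N'=N/p^{\,v_p(N)}<N$, and the number of \emph{reduced} irreducible points drops to $(N'-1)/2$. Since the scheme is cut out by fixed polynomials with integer coefficients, upper semicontinuity of the fibre length gives $\mathrm{length}_{\FF}X(\Gamma)\geq (N-1)/2$, strictly larger than the number $(N'-1)/2$ of reduced points in characteristic $p$; the resulting gap forces at least one non-reduced (hence singular) point, which is precisely an order-$1$ ramification.

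The step I expect to be the main obstacle is the bookkeeping that makes this length comparison rigorous together with the exact count: I must verify that the residual $\ZZ/2\ZZ$-summand and the meridian condition really cut the irreducible locus down to exactly $(N-1)/2$ reduced points over $\CC$ (with no hidden multiplicity), and that the integral $\mathrm{SL}_2$-character scheme behaves well enough under reduction mod $p$ for semicontinuity to apply to the relevant components. As a concrete substitute I would, in the worst case, exhibit the trace coordinates of the dihedral locus as the roots of an explicit monic polynomial over $\ZZ$ — the analogue of $x^2+x-1$ for the figure-eight knot, whose discriminant is $5$ — and show directly that its discriminant is divisible by $p$ exactly when $p\mid N$, producing a repeated root and hence the non-reduced point or the tangency with the abelian locus in characteristic $p$. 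The delicate input throughout is the modular phenomenon $p\mid|\Gamma|$, encoded in the absence of $p$-th roots of unity in $\overline{\FF}^{\,*}$.
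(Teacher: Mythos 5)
Your argument is essentially the paper's: the proof given in Section~\ref{sec:t=0} consists precisely of the observation that the metabelian (binary dihedral) characters in $t^{-1}(0)\subset X(\Gamma)$ are parametrised by pairs $\{\chi,\chi^{-1}\}$ of nontrivial characters of $H_1(M_2)$, giving $\frac{|\Delta_K(-1)|-1}{2}$ points over $\CC$ but only $\frac{m-1}{2}$ over $\Fp$ (where $\Delta_K(-1)=\pm p^k m$ with $p\nmid m$), because in characteristic $p$ an element of $p$-power order in $\mathrm{PSL}_2(\FF)$ is parabolic rather than diagonalisable, so the $p$-part of $H_1(M_2)$ carries no nontrivial character into $\overline{\FF}^{\,*}$. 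This collapse in the number of points, i.e.\ of irreducible components of the $0$-dimensional set $X(\Gamma)$, is already the discrepancy required by the definition of ramification, so the additional semicontinuity/non-reducedness argument --- the part you yourself flag as delicate --- is not needed for the statement as given.
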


Remark that, for $2$-bridge knots, $\Gamma$ is a binary dihedral group,
extension of $\ZZ/2\ZZ$ by $\pi_1(M_2)=H_1(M_2)$. In addition, 
$\mathrm{SL}_2(\CC)$-representations of $\Gamma$ are lifts of representations 
of $\pi_1(K)/<<\mu^2>>$ to $\mathrm{PSL}_2(\CC)$.

A natural question is to determine what ramification phenomena can appear in 
the situation of Theorem~\ref{th:main} and if the ramification is for $X(K)$ 
itself and not just for its algebraic subset $X(\Gamma)$. The easiest class of 
examples to consider is that of torus knots (see Section~\ref{sec:torus}). Over 
$\CC$ the character variety of a torus knot consists of a finite number of 
rational curves and the only ramification phenomena that occur are when several 
irreducible components collapse onto a single one. It follows that the
character variety of the torus knot $T(a,b)$ ramifies for every odd prime $p$
dividing $ab$, in particular whenever $p$ divides $|\Delta_{T(a,b)}(-1)|$.  

Although torus knots show that the ramification phenomena that can be observed 
in the situation of Theorem~\ref{th:main} is not always of the type seen for 
the figure-eight knot, one might expect that this latter behaviour is generic, 
at least for hyperbolic knots. The following corroborates this hypothesis.

\begin{Proposition}\label{p:ram1}
Let $K$ be a hyperbolic $2$-bridge knot with Alexander polynomial $\Delta_K(t)$.
Let $p$ be a divisor of $|\Delta_K(-1)|$. If all roots of $\Delta_K$ are 
simple, then the ramification phenomena occurring at $p$ is of order $1$.  
\end{Proposition}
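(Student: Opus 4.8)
We already know from Theorem~\ref{th:main} that $p$ ramifies; the task is to show that this ramification is of order $1$. The plan is to localize at the characters with meridian trace $t=\mathrm{tr}(\mu)=0$ (the dihedral locus) and to combine the deformation theory of reducible characters at roots of $\Delta_K$ with the palindromic symmetry of $\Delta_K$, which forces $-1$ to be a root of even order whenever it is a root. Concretely, for a hyperbolic $2$-bridge knot $X(\pi_1(K))$ is, over any algebraically closed field, the union of the line of abelian characters---on which $t=s+s^{-1}$ for $\mu\mapsto\mathrm{diag}(s,s^{-1})$---with a curve $C$ of non-abelian characters cut out by the Riley polynomial; the non-abelian characters limiting onto the abelian line are exactly those with $s^2$ a root of $\Delta_K$, and the dihedral characters are the non-abelian ones with $t=0$, i.e.\ $s^2=-1$. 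The decisive classical input is that a \emph{simple} root $\alpha$ of $\Delta_K$ yields a smooth point of $X(\pi_1(K))$ at which $C$ is smooth and crosses the abelian line transversally.

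Over $\CC$, because all roots of $\Delta_K$ are simple and, by the identity $s+s^{-1}+2=(s+1)^2/s$, the value $s^2=-1$ can only occur as a root of even multiplicity, $-1$ is not a root of $\Delta_K$ at all. Hence the dihedral characters lie on $C$ but off the abelian line, every crossing of $C$ with the abelian line is transverse at a distinct smooth point, and the component structure is the generic one; in particular there is nothing to produce an order-$0$ degeneration.

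Reducing modulo $p\mid|\Delta_K(-1)|$, the value $-1$ becomes a root of $\bar\Delta_K$, and by the same identity it is an even-order---generically double---root. This is exactly the degeneration that simple roots excluded over $\CC$: the dihedral locus $t=0$ now meets the abelian line, and since the responsible root is double rather than simple the deformation result no longer delivers a transverse smooth crossing. Concretely the pair of crossing points $t=\pm\sqrt{\alpha+2+\alpha^{-1}}$ carried by the colliding pair $\{\alpha,\alpha^{-1}\}\to\{-1\}$ coalesces at $t=0$, yielding a tangency of $C$ with the abelian line, or a cuspidal point of $C$, precisely as in the figure-eight computation of the introduction.

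To conclude that the ramification is of order $1$ and not of order $0$, I would check that this is the only discrepancy: away from $t=0$ the remaining roots of $\Delta_K$ persist as simple roots modulo $p$, so all other crossings stay transverse and smooth, and no irreducible component of $C$ is split, merged, or dropped in dimension. The ramification is then confined to a local change of intersection type (transverse to tangent) or of singularity type (smooth point to cusp), as required. The main obstacle is precisely this last point: excluding order-$0$ ramification means controlling the reduction of the Riley polynomial and showing that its factorization type---the number of non-abelian components---is preserved, so that the entire degeneration is isolated at the double root $-1$; pinning down the exact local normal form at the coalescing dihedral character, to match the figure-eight model, is the remaining computation.
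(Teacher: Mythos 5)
Your core argument is the paper's: over $\CC$ the simplicity of the roots of $\Delta_K$ gives, via \cite{HPS}, transverse intersections of the non-abelian curve with the line of abelian characters, while mod $p$ the intersection at the dihedral point $(t,x)=(0,2)$ degenerates, and this change of intersection (or singularity) type is by definition an order-$1$ ramification. One step should be tightened, though. Your ``coalescing pair $\pm t_0$'' picture is better replaced by the direct computation of Lemma~\ref{lemma:2bridgeatt0}: writing $X(K)$ as $(x-2)P(t,x)=0$ with $P$ \emph{even} in $t$, one has $P(t,2)=P(0,2)+t^2Q(t^2,2)$ with $P(0,2)=\Phi_\alpha(2)=\alpha=|\Delta_K(-1)|$, so $P(t,2)\equiv t^2Q(t^2,2)\bmod p$ and the intersection multiplicity of $P=0$ with $x=2$ at $(0,2)$ is at least $2$. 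The evenness of $P$ in $t$ is the actual mechanism behind your symmetric pair of crossing points, and arguing directly in characteristic $p$ avoids the delicate business of specialising the roots of $P(t,2)$ from characteristic $0$ (the Alexander polynomial of a $2$-bridge knot need not be monic, so roots can escape to infinity under reduction).

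More importantly, the ``main obstacle'' you flag --- excluding order-$0$ ramification by controlling the factorisation of the Riley polynomial mod $p$ --- is not part of what the proposition asserts, and the paper does not prove it. Read with the definition in the introduction (which explicitly allows order-$0$ and order-$1$ phenomena to coexist), the statement claims only that an order-$1$ phenomenon \emph{occurs} at $p$, namely that a transverse intersection becomes tangent or a smooth point becomes singular; it does not claim the component structure is preserved. So your proof is complete once the non-transversality at $(0,2)$ is established, and the unresolved step you identify is a stronger statement than the one being proved. (If you do want to pursue it: the Remark following Lemma~\ref{lemma:2bridgeatt0} shows, for $\alpha=p$, that $P(t,x)$ cannot become a power mod $p$ using the degree bound of \cite{HLMJKTR}, and Remark~\ref{Remark:Discriminant} confines degenerations along $x=2$ to primes dividing the discriminant of $\Delta_K$; a general exclusion of order-$0$ ramification is left open in the paper.)
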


By \cite{HPS} the hypothesis that all roots of $\Delta_K(t)$ are simple implies 
that the curve of abelian characters intersects transversely the other 
components of $X(K)$. In Lemma~\ref{lemma:2bridgeatt0} we show that the 
intersection is no more transverse but tangent in characteristic $p$ whenever 
$p$ divides $|\Delta_K(-1)|$. Furthermore, the arguments of \cite{HPS} can be 
used to argue that ramifications in the intersection of both components will 
appear at most for primes that divide the discriminant of $\Delta_K(t)$, 
provided this discriminant does not vanish (namely, all roots of $\Delta_K(t)$ 
are simple). See Remark~\ref{Remark:Discriminant}. Notice that $\Delta_K(-1)$ 
divides the discriminant of $\Delta_K(t)$. 


Incidentally, the techniques of the proof yield the following fact which does 
not seem to be stressed elsewhere in the literature.


\begin{Proposition}\label{p:irred}
Let $K$ be a hyperbolic $2$-bridge knot with Alexander polynomial $\Delta_K(t)$.
Let $\alpha=|\Delta_K(-1)|$. If $\alpha$ and $(\alpha-1)/2$ are prime numbers, 
then the character variety of $K$ over $\CC$ consists precisely of two 
irreducible components: the one whose characters correspond to abelian 
representations and Thurston's excellent one.
\end{Proposition}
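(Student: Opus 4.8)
The plan is to understand the irreducible components of the $\mathrm{SL}_2(\CC)$-character variety of a $2$-bridge knot $K$ and to count them under the strong arithmetic hypothesis that both $\alpha = |\Delta_K(-1)|$ and $(\alpha-1)/2$ are prime. Let me think about what this proposition is really claiming and how the structure of $2$-bridge knots forces the component count.

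Let me recall the structure. For a $2$-bridge knot, $H_1(M_2) \cong \ZZ/\alpha\ZZ$ where $\alpha = |\Delta_K(-1)|$ is odd (since $K$ is a knot). The character variety $X(K)$ always contains the curve of abelian (reducible) characters. The non-abelian part has been studied classically; irreducible $\mathrm{SL}_2(\CC)$-representations of a $2$-bridge knot group are parameterized by a polynomial (the Riley polynomial), and each nonabelian component contains Thurston's excellent component for a hyperbolic knot. The key point is that the "extra" nonabelian components beyond the excellent one arise from representations that are binary-dihedral, i.e. factor (up to lifting) through the dihedral-type quotient $\Gamma$.

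I would proceed as follows. First I would identify the dihedral characters: these are exactly the characters of representations factoring through the quotient $\pi_1(K)/\langle\langle\mu^2\rangle\rangle$ into $\mathrm{PSL}_2$, lifting to $\Gamma$-representations in $\mathrm{SL}_2$. As noted in the remark after Theorem~\ref{th:main}, $\Gamma$ is a binary dihedral group, an extension of $\ZZ/2\ZZ$ by $H_1(M_2) = \ZZ/\alpha\ZZ$. Its irreducible $\mathrm{SL}_2(\CC)$-representations are two-dimensional and are indexed by the nontrivial characters of $\ZZ/\alpha\ZZ$ up to inversion $\zeta \mapsto \zeta^{-1}$, giving $(\alpha-1)/2$ isolated irreducible dihedral characters. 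Each such dihedral character sits at the intersection of the abelian component with a nonabelian component of $X(K)$: this is the content of the classical theory (and of Lemma~\ref{lemma:2bridgeatt0}). Second, I would recall that for a hyperbolic $2$-bridge knot the nonabelian part of $X(K)$ consists of the excellent component together with possibly further components, and that the number of nonabelian characters lying on the abelian curve (the dihedral characters) equals $(\alpha-1)/2$. The proposition then reduces to showing that under the primality hypotheses all $(\alpha-1)/2$ dihedral characters lie on a single nonabelian component, namely the excellent one.

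The crux is a counting/irreducibility argument on the Riley polynomial or equivalently on the degree of the nonabelian character variety. I would use the fact that the nonabelian characters of $\pi_1(K)$ are cut out by a polynomial $\Phi_K$ (in the trace coordinates $t$ of the meridian and $x$), and that its restriction to the abelian locus $t^2 = $ (suitable value) detects exactly the dihedral characters. The total number of dihedral characters being $(\alpha-1)/2$, a prime, is the lever: a proper nonempty union of nonabelian components would partition these $(\alpha-1)/2$ intersection points into blocks whose sizes are constrained by the degrees of the components, and primality of $(\alpha-1)/2$ together with primality of $\alpha$ (which controls irreducibility of the cyclotomic-type factor governing the Galois action permuting the dihedral characters) forces there to be a single block — hence a single nonabelian component. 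Concretely, the Galois group of $\QQ(\zeta_\alpha)/\QQ$ acts transitively on the $(\alpha-1)/2$ dihedral characters when $\alpha$ is prime, and this Galois action must preserve the partition into components; transitivity then forces one component.

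The main obstacle I expect is precisely justifying that the $(\alpha-1)/2$ dihedral characters are permuted transitively by a Galois action compatible with the decomposition into irreducible components, and that no dihedral character can lie on the excellent component "by accident" in a way that splits the count. In other words, I must rule out that the nonabelian locus decomposes as the excellent component plus one or more dihedral-only components while still being consistent with the arithmetic. The clean way to handle this is to show that the irreducible factors of the relevant Riley-type polynomial over $\QQ$ correspond bijectively to the nonabelian components, that the dihedral characters are exactly the roots lying over the abelian locus, and that the Galois group $\mathrm{Gal}(\QQ(\zeta_\alpha)^+/\QQ) \cong (\ZZ/\alpha\ZZ)^\times/\{\pm1\}$ of order $(\alpha-1)/2$ acts transitively on them; since this order is prime, any Galois-stable factor containing one dihedral root contains all of them, so there is a unique nonabelian component and it is Thurston's excellent one. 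Tying the Galois-theoretic transitivity to the geometric irreducibility of components — rather than merely to arithmetic over $\QQ$ — is the delicate point, and it is where the hyperbolicity hypothesis (guaranteeing the existence of a distinguished excellent component through which the dihedral characters must factor) does the essential work.
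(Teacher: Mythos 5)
Your overall strategy is the same as the paper's: reduce to showing that all $(\alpha-1)/2$ dihedral characters (the points of $Y^{ab}(K)$, i.e.\ the zeros of $\Phi_\alpha(x)=P(0,x)$) lie on a single irreducible component, and use the transitive action of $\operatorname{Gal}(\QQ(\zeta_\alpha)^+/\QQ)$ on these points together with the primality of $(\alpha-1)/2$. However, there is a genuine gap at the decisive step. The Galois group permutes the irreducible components of $\{P=0\}$ over $\CC$, so the partition of the dihedral characters induced by the components is a system of blocks for a transitive action on a set of prime cardinality $(\alpha-1)/2$; this forces all blocks to have the same size, namely $1$ or $(\alpha-1)/2$. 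Primality alone does \emph{not} exclude block size $1$, i.e.\ the scenario in which $P$ splits over $\CC$ into $(\alpha-1)/2$ Galois-conjugate components each containing exactly one dihedral character. Your phrase ``any Galois-stable factor containing one dihedral root contains all of them'' does not close this: the individual components over $\CC$ need not be Galois-stable (only the whole collection is), and your fallback --- that hyperbolicity forces the dihedral characters to ``factor through'' the excellent component --- is not true a priori; nothing in the hypotheses places any dihedral character on the excellent component to begin with. The paper supplies the missing geometric input: since $M_2$ is a lens space, the orbifold with cone angle $\pi$ is spherical, its holonomy lifts to $SU(2)\times SU(2)$ giving \emph{two distinct} characters in $Y^{ab}(K)$ (one per $SU(2)$ factor), and the path of holonomies of spherical and Euclidean cone structures (Boileau--Porti, Weiss, Porti--Weiss) joins these two characters by a curve of smooth points, so at least one block has size $\geq 2$. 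Only then does primality of $(\alpha-1)/2$ force a single block, hence a single non-abelian component.

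A secondary point you leave implicit: for the block-counting to account for \emph{every} non-abelian component, you must know that each such component actually meets the slice $t=0$. The paper gets this from the fact that $X(K)$ has no ideal points with $t=0$ (Lemma~A.1.1 of Boileau--Porti), so every one-dimensional component of $\{P=0\}$ intersects $\{t=0\}$. Without this, a component disjoint from $Y^{ab}(K)$ would escape your count entirely.
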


At this point we do not know whether the condition $(\alpha-1)/2$ prime in the
above proposition is in fact necessary. 
On the other hand, there are examples of hyperbolic $2$-bridge knots such that 
neither $\alpha$ nor $(\alpha-1)/2$ are prime numbers and yet their character 
varieties consist of just two irreducible components.

To complete our analysis and to improve our understanding of ramification
phenomena, we provide explicit computations of character varieties for some
knots, notably for the tunnel number-one $P(2,2,3)$-pretzel knot $8_5$ and for
the $\pi$-hyperbolic knot $8_{18}$. We also study character varieties for some 
$2$-bridge knots. Among them, the knot $8_9$ admits a ramified prime that does 
not come from the situation of Theorem~\ref{th:main}. Quite interestingly, in
this case we observe a new type of ramification, namely an irreducible
component splitting into two irreducible ones. 

Note that the aforementioned examples cover all possible geometries of double 
branched covers of hyperbolic knots, that is spherical, Seifert fibred, and 
hyperbolic. As for the case of hyperbolic knots with double branched covers 
admitting a non trivial JSJ-decomposition, their character varieties seem hard 
to compute because of the ``large number" of generators of their groups (none 
of these is a tunnel number-one knot, for instance). Instead, we compute 
explicitly the character varieties of two tunnel number-one satellite knots, 
both with companion the trefoil knot. The first is a cable knot whose double 
branched cover is a graph manifold, while the second is a Whitehead double so 
the JSJ-decomposition of its double branched cover contains a hyperbolic piece. 
As a side remark, the double branched covers of these two knots are also the 
double branched covers of hyperbolic knots. Of course, it is not clear if 
there might be a relationship between the character varieties of two knots 
sharing the same double cover.

The paper is organised as follows. { In section~\ref{s:chvar} we 
recall basic facts about character varieties. The proofs of 
Theorems~\ref{th:bridge} and \ref{th:main} follow from the discussion in
Section~\ref{sec:t=0}, while Section~\ref{sec:twobridge} deals with the 
specificities of $2$-bridge knots. This latter section also contains detailed
computations of character varieties for some $2$-bridge knots and the primes for
which they ramify. The character varieties of the knots $8_5$ and $8_{18}$ are 
studied in Sections~\ref{sec:pretzel} and \ref{sec:pihyperbolic}, respectively. 
Section~\ref{sec:torus} is dedicted to character varieties of torus knots,
while in Section~\ref{sec:sat} we discuss two examples of character varieties
of satellite knots. Finally, some auxiliary computations are provided in the 
Appendices.}

\medskip

The computations in this paper were performed by using Maple\texttrademark.

\paragraph*{Acknowledgements} The authors are indebted to Alan Reid for 
suggesting the exploration of order 1 type ramification. L. Paoluzzi is also
thankful to UAB for hospitality during her visit when the contents of the 
present paper were originally discussed.

\color{black}


\section{Varieties of characters}\label{s:chvar}

In this section we recall relevant facts about character varieties and 
introduce some notation.

Let $\Gamma$ be a finitely presented group and $\FF$ an algebraically closed
field. Given a representation $\rho \colon \Gamma \longrightarrow 
\mathrm{SL}_2(\FF)$, one can define a map $\chi_\rho \colon \Gamma 
\longrightarrow \FF$ by composing with the trace function. Such map is called 
\emph{the character of the representation $\rho$}. We will denote 
$X(\Gamma)_\FF$ the set of all characters of representations of $\Gamma$. Note 
that each element $\gamma$ of $\Gamma$ induces an evaluation map 
$\tau_\gamma \colon X(\Gamma)_\FF \longrightarrow \FF$ defined as 
$\tau_\gamma(\chi)=\chi(\gamma)$.

For simplicity, when $\FF=\CC$ we will usually write $X(\Gamma)$ instead of 
$X(\Gamma)_\CC$. 

\begin{Proposition}[\cite{CullerShalen,AcunaMontesinos}]
\label{proposition:X(G)}
The set of characters $X(\Gamma)$ is an affine algebraic set defined over
$\ZZ$, which embeds in $\CC^N$ with coordinate functions 
$(\tau_{\gamma_1}, \ldots, \tau_{\gamma_N})$ for some
$\gamma_1,\ldots,\gamma_N\in \Gamma$.
\end{Proposition}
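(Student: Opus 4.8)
The plan is to realize $X(\Gamma)$ as the categorical quotient of the representation variety by the conjugation action of $\mathrm{SL}_2(\CC)$, and to exploit the $\mathrm{SL}_2$ trace identities to produce finitely many coordinate functions with integer coefficients. First I would fix a finite generating set $\gamma_1,\dots,\gamma_k$ together with a finite presentation of $\Gamma$, and consider the representation variety $R(\Gamma)=\mathrm{Hom}(\Gamma,\mathrm{SL}_2(\CC))\subseteq (\mathrm{SL}_2(\CC))^k$, where a homomorphism is recorded by the images of the generators. This realizes $R(\Gamma)$ as the closed subset of $(\mathrm{SL}_2(\CC))^k$ cut out by the equations $\det=1$ defining each factor and by the entrywise vanishing of each relator; all of these have integer coefficients, so $R(\Gamma)$ is an affine algebraic set defined over $\ZZ$. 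The group $\mathrm{SL}_2(\CC)$ acts by simultaneous conjugation, and the trace functions $\tau_\gamma$ are exactly the invariants I will use.

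Next I would record the fundamental trace identity: Cayley--Hamilton gives $A+A^{-1}=\operatorname{tr}(A)\,I$ for $A\in\mathrm{SL}_2$, hence $\operatorname{tr}(AB)+\operatorname{tr}(A^{-1}B)=\operatorname{tr}(A)\operatorname{tr}(B)$ for all $A,B$. Applying this repeatedly, I would prove by induction on word length that for every $\gamma\in\Gamma$ the value $\operatorname{tr}(\rho(\gamma))$ is a polynomial with integer coefficients in the $2^k-1$ functions $\tau_{\gamma_{i_1}\cdots\gamma_{i_r}}$ indexed by the nonempty subsets $\{i_1<\cdots<i_r\}\subseteq\{1,\dots,k\}$. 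Taking $\gamma_1,\dots,\gamma_N$ (with $N=2^k-1$) to be these ordered products yields two things at once: the evaluation map $\Phi\colon X(\Gamma)\to\CC^N$, $\chi\mapsto(\chi(\gamma_1),\dots,\chi(\gamma_N))$, is injective, since the listed traces determine $\chi$ on all of $\Gamma$; and every invariant trace function lies in the $\ZZ$-subalgebra generated by the coordinate functions $\tau_{\gamma_j}$.

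It then remains to show that the image $\Phi(X(\Gamma))=t(R(\Gamma))$, where $t\colon R(\Gamma)\to\CC^N$ is the regular trace map, is a Zariski-closed algebraic set defined over $\ZZ$. Here I would invoke geometric invariant theory: since $\mathrm{SL}_2(\CC)$ is reductive and $R(\Gamma)$ is affine, the categorical quotient $R(\Gamma)/\!/\mathrm{SL}_2(\CC)=\operatorname{Spec}\bigl(\CC[R(\Gamma)]^{\mathrm{SL}_2(\CC)}\bigr)$ exists and is affine, with points parameterizing closed orbits, i.e.\ characters (each representation sharing the character of its semisimplification). By the classical invariant theory of tuples of $2\times2$ matrices (Fricke--Vogt, Procesi) the invariant ring $\CC[R(\Gamma)]^{\mathrm{SL}_2(\CC)}$ is generated by trace functions, hence by the $\tau_{\gamma_j}$ of the previous step; thus $\Phi$ is a closed embedding of $X(\Gamma)$ onto an affine subvariety of $\CC^N$.

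I expect the main obstacle to be the over-$\ZZ$ claim together with closedness, rather than the trace-identity reduction, which is routine. Concretely, one must check that the generation of the invariant ring by traces, and therefore the ideal of relations among the $\tau_{\gamma_j}$, can be realized with integer coefficients. I would handle this by observing that every trace identity used above is integral, so the surjection $\ZZ[x_1,\dots,x_N]\to$ (trace subring) and its kernel are defined over $\ZZ$, and the reductivity-based closedness argument identifies this kernel with the ideal of $\Phi(X(\Gamma))$. This integrality and finite generation are precisely what is established in \cite{CullerShalen,AcunaMontesinos}, so in the write-up I would cite those references for the reductive quotient and the integral structure, and keep the self-contained part of the argument focused on the trace-identity reduction giving injectivity and the explicit $\ZZ$-polynomial coordinates.
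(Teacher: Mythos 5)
The paper gives no proof of this Proposition --- it is quoted directly from \cite{CullerShalen,AcunaMontesinos} --- and your sketch correctly reconstructs the standard argument from exactly those references: the Cayley--Hamilton trace identity and the Vogt--Fricke induction reducing all traces to the $2^k-1$ products of distinct generators, the GIT/categorical-quotient argument for closedness of the image, and deferral to the cited sources for the integral structure. This matches the approach the paper relies on, so there is nothing to compare beyond noting that the one genuinely delicate point (that the ideal of the image is generated over $\ZZ$, not merely that the trace identities have integer coefficients) is precisely the part you, appropriately, leave to \cite{AcunaMontesinos}.
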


The affine algebraic set $X(\Gamma)$ is called the \emph{character variety} of 
$\Gamma$: it can be interpreted as the algebraic quotient of the variety of
representation of $\Gamma$ into $\mathrm{SL}_2(\CC)$ by the conjugacy
action of $\mathrm{PSL}_2(\CC)=
\mathrm{SL}_2(\CC)/\mathcal{Z}(\mathrm{SL}_2(\CC))$.

Note that the set $\{\gamma_1,\ldots,\gamma_N\}$ in the above proposition can
be chosen to contain a generating set of $\Gamma$. For $\Gamma$ the fundamental 
group of a knot exterior, we will then assume that it always contains a 
representative of the meridian.

It is worth pointing out that the polynomial equations with integer
coefficients of Proposition~\ref{proposition:X(G)} only determine $X(\Gamma)$
as an algebraic set and not as a scheme. 

A careful analysis of the arguments in \cite{AcunaMontesinos} shows that
Proposition~\ref{proposition:X(G)} still holds if $\CC$ is replaced by any 
algebraically closed field $\FF$, provided that its characteristic is different
from $2$. Let $\FF_p$ denote the field with $p$ elements and $\Fp$ its
algebraic closure. We have:

\begin{Proposition}[\cite{AcunaMontesinos}]
\label{proposition:X(G)Fp}
Let $p>2$ be an odd prime number. The set of characters $X(\Gamma)_{\Fp}$ 
associated to representations of $\Gamma$ into $\mathrm{SL}_2(\Fp)$ is an 
algebraic set which embeds in $\Fp ^N$ with coordinate functions 
$(\tau_{\gamma_1}, \ldots, \tau_{\gamma_N})$ for the same 
$\gamma_1,\ldots,\gamma_N\in \Gamma$ seen in 
Proposition~\ref{proposition:X(G)}. Moreover, $X(\Gamma)_\Fp$ is defined by the 
reductions mod $p$ of the polynomials with coefficients in $\ZZ$ which define 
$X(\Gamma)$.
\end{Proposition}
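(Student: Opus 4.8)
The plan is to re-run the construction of \cite{AcunaMontesinos} over the base ring $\ZZ$ and then base change to $\Fp$, keeping careful track of every place where an integer that is not a unit of $\Fp$ gets inverted. Concretely, I would fix a finite generating set $g_1,\dots,g_m$ of $\Gamma$ and view the representation functor, which assigns to a commutative ring $A$ the set of homomorphisms $\Gamma\to\SLL_2(A)$, as an affine scheme over $\ZZ$: it is cut out inside $(\SLL_2)^m$ by the matrix relations coming from a presentation of $\Gamma$, all of which are polynomial equations with integer coefficients. Its $\Fp$-points are then exactly $\mathrm{Hom}(\Gamma,\SLL_2(\Fp))$, and the trace functions $\tau_{\gamma_1},\dots,\tau_{\gamma_N}$ attached to the finite list of words $\gamma_i$ used in Proposition~\ref{proposition:X(G)} (traces of single generators, of products of pairs, and of products of triples, as in \cite{AcunaMontesinos,CullerShalen}) are integral polynomial functions on this scheme.

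The first substantive step is to observe that the two algebraic inputs of \cite{AcunaMontesinos} are purely formal, hence insensitive to reduction mod $p$. These are: (i) the trace identities for $\SLL_2$, namely the Cayley--Hamilton relation $A^2=\operatorname{tr}(A)A-I$, the relation $\operatorname{tr}(A^{-1})=\operatorname{tr}(A)$, and the fundamental identity $\operatorname{tr}(AB)+\operatorname{tr}(AB^{-1})=\operatorname{tr}(A)\operatorname{tr}(B)$, together with their consequences; and (ii) the resulting statement that every $\tau_\gamma$ is an integer-coefficient polynomial in the finitely many $\tau_{\gamma_i}$, so that a character is determined by the tuple $(\tau_{\gamma_1}(\chi),\dots,\tau_{\gamma_N}(\chi))$. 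All of these are formal polynomial identities between matrix entries with coefficients in $\ZZ$; since they hold universally over $\ZZ$, they remain valid after the homomorphism $\ZZ\to\Fp$. I would therefore conclude that the trace map $\mathrm{Hom}(\Gamma,\SLL_2(\Fp))\to\Fp^N$ descends to an injection on conjugacy classes, with image contained in the zero set of the mod-$p$ reductions of the defining equations; so $X(\Gamma)_\Fp$ embeds in $\Fp^N$ via the same coordinate functions and satisfies the reduced equations.

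The converse inclusion, that every point of $\Fp^N$ satisfying the reduced equations actually is the character of some $\rho\colon\Gamma\to\SLL_2(\Fp)$, is the crux and is where I expect the main obstacle to lie. Over $\CC$ this is the reconstruction step of \cite{AcunaMontesinos}: from an admissible tuple of traces one builds an explicit representation, essentially by decomposing each matrix into its scalar and trace-free parts, $A=\tfrac{\operatorname{tr}(A)}{2}I+A_0$, recovering the bilinear data $\operatorname{tr}(A_0B_0)=\operatorname{tr}(AB)-\tfrac12\operatorname{tr}(A)\operatorname{tr}(B)$, and solving for the matrix entries. This reconstruction is precisely the point at which $\tfrac12$ enters, and it is the only place in the whole argument where an integer must be inverted; consequently it survives verbatim over any field in which $2$ is invertible, that is for every odd $p$, but collapses in characteristic $2$. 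The real task is thus to verify that the explicit formulas of \cite{AcunaMontesinos} producing a representation from an admissible trace tuple involve no denominator other than powers of $2$, and that the resulting matrices lie in $\SLL_2(\Fp)$ with the prescribed traces. The reducible characters should be handled separately, but they cause no trouble: reducibility is detected by integral trace conditions (vanishing of the relevant commutator trace minus $2$), and such a character is always realised by an upper-triangular representation whose diagonal entries are roots of $T^2-\operatorname{tr}(g_i)T+1$, which exist in the algebraically closed field $\Fp$.

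Combining these, the zero set over $\Fp$ of the reduced equations coincides with the set of characters of $\SLL_2(\Fp)$-representations, and $\tau_{\gamma_1},\dots,\tau_{\gamma_N}$ give the asserted embedding. To reiterate, the main obstacle is not the formal trace calculus of the second step but the reconstruction of the third: one must confirm that the passage of \cite{AcunaMontesinos} from an admissible trace tuple back to a genuine representation is denominator-free away from $2$, since this is exactly what guarantees that reduction modulo an odd $p$ neither creates spurious solutions of the equations nor loses genuine characters.
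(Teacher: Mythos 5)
Your proposal is correct and follows essentially the same route as the paper, which gives no independent proof but simply asserts that ``a careful analysis of the arguments in \cite{AcunaMontesinos}'' shows the complex-coefficient statement survives over any algebraically closed field of characteristic different from $2$. Your identification of the two ingredients --- the integrality of the trace calculus and the fact that the reconstruction of a representation from an admissible trace tuple only ever inverts $2$ --- is precisely the content of that analysis, so your write-up fills in the details the paper leaves to the reader rather than taking a different path.
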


The key observation here is that, as in the case of complex numbers, the 
algebraic set $X(\Gamma)_{\Fp}$ can be again interpreted as the algebraic 
quotient of the variety of representation of $\Gamma$ into $\mathrm{SL}_2(\Fp)$ 
by the conjugacy action of $\mathrm{PSL}_2(\Fp)=
\mathrm{SL}_2(\Fp)/\mathcal{Z}(\mathrm{SL}_2(\Fp))$. 

We will be mainly interested in the situation where $\Gamma$ is the fundamental
group $\pi_1(\mathbf{S}^3\setminus K)$ of the exterior of a knot $K$. In this 
case we shall write $X(K)_\FF$ or $X(K)$ instead of 
$X(\pi_1(\mathbf{S}^3\setminus K))_\FF$ or $X(\pi_1(\mathbf{S}^3\setminus K))$ 
respectively.

Since the abelianisation of a knot group is infinite cyclic, for each element 
$A$ of $\mathrm{SL}_2(\FF)$ there is precisely one abelian representation of 
$\pi_1(\mathbf{S}^3\setminus K)$ defined by sending a meridian of $K$ to $A$. 
The set of abelian representations projects onto a rational, one dimensional 
component, of $X(K)_\FF$.  

If $K$ is a hyperbolic knot, $\pi_1(\mathbf{S}^3\setminus K)$ admits a 
hyperbolic holonomy representation into $\mathrm{PSL}_2(\CC)$. It is not hard 
to see that such representation lifts to a representation 
$\pi_1(\mathbf{S}^3\setminus K) \longrightarrow \mathrm{SL}_2(\CC)$. It was 
proved by Thurston that the character of this representation is a smooth point 
on a one-dimensional irreducible component of $X(K)$, known as the 
\emph{canonical} or \emph{excellent component} of $K$. More generally, Thurston 
proved that every irreducible component of $X(K)$ has dimension at least one
\cite{KapovichBook, ThurstonNotes}.




\section{Characters vanishing at the meridian}
\label{sec:t=0}

Consider the map $$t=\tau_{\mu}\colon X(K)\to \CC,$$ 
sending a character to the trace of the meridian. We are interested in its 
fiber at $0$:
$$
Y:= t^{-1}(0)\subset X(K).
$$
Note that if we choose the trace of the meridian to be one of the coordinates
of $X(K)$, the map is just a projection onto a coordinate and the fibre the
intersection of $X(K)$ with a hyperplane. The condition $t(\chi_\rho)=0$ means 
that $\chi_\rho$ is the character of a representation $\rho$ that maps $\mu$ to 
a matrix $A$ such that $A^2=-\mathrm{Id}$. Consider the induced representation 
$\hat\rho$ obtained by composing $\rho$ with the natural quotient 
$\mathrm{SL}_2(\CC)\longrightarrow \operatorname{PSL}_2(\mathbb{C})$~: 
$\hat\rho$ restricts to a representation $\hat\rho'$ of the double branched 
covering, $M_2$, of $K$.

We decompose the set $Y=t^{-1}(0)$ into three (possibly empty) disjoint 
subsets, according to 
the behaviour of $\hat\rho'$ 
$$
Y =Y^{triv}(K) \sqcup Y^{ab}(K) \sqcup Y^{nab}(K),
$$
where: 
\begin{itemize}
        \item $\chi\in Y^{triv}(K)$ if there is a representation $\rho$ with
character $\chi$ such that $\hat\rho'$ is trivial; 
%
        \item  $\chi\in Y^{ab}(K)$ if there is a representation $\rho$ with
character $\chi$ such that $\hat\rho'$ is abelian, but no such abelian 
representation is trivial;
         \item  $Y^{nab}(K)$ if every representation $\rho$ with character 
$\chi$ is such that $\hat\rho'$ is not abelian.
       \end{itemize}

Note that $Y^{triv}(K) = X^{ab}(K) \cap Y$ always consists of a single point, 
the character of the abelian representation in $ t^{-1}(0) $.

In characteristic zero, $Y^{ab}(K) $ is the set of binary dihedral 
representations and also the set of metabelian (and non-abelian) representations
(see \cite{Lin,Nagasato}). As the cardinality of $H_1(M_2,\ZZ)$ is 
$\vert \Delta_K(-1)\vert$, an odd integer, the cardinality of $Y^{ab}(K)$ is 
$$\vert Y^{ab}(K) \vert = \frac{\vert \Delta_K(-1)\vert-1}{2}.$$ 
Thus $Y^{ab}(K)=\emptyset$ if and only if $\Delta_K(-1)=\pm 1$.

Note that the subset $Y^{triv}(K)\cup Y^{ab}(K)$ is a subvariety of $Y(K)$ and
thus of $X(K)$. Indeed, this set corresponds to the fibre $ t^{-1}(0) $ of
$X(\Gamma)$, where $\Gamma$ is the quotient of $\pi_1(\mathbf{S}^3\setminus K)$ defined in the introduction.


Before describing the reduction mod $p$ of $Y^{ab}(K) $ we need a lemma on 
finite order elements of $\mathrm{PSL}_2(\mathbb K)$. Its proof is a
straightforward consequence of the structure of $\mathrm{PSL}_2(\FF_{p^k})$
(see \cite{Dickson}).

\begin{Lemma} 
Let $\FF$ be an algebraically closed field of characteristic $p\geq 3$ 
and $A\in\mathrm{PSL}_2(\FF)$ an element of finite order. Then either 
$\operatorname{order}(A)$ is coprime with $p$ or $\operatorname{order}(A)=p$. 
In addition:
\begin{itemize}
\item When $\operatorname{order}(A)$ is coprime with $p$, then $A$ is 
diagonalisable.
\item When  $\operatorname{order}(A)=p$, then $A$ is conjugate to a matrix of 
the form
$\pm \left(  \begin{smallmatrix}
1 & * \\ 0 & 1
\end{smallmatrix}\right)$ 
\end{itemize}
\end{Lemma}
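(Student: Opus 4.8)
The plan is to prove the lemma by invoking the well-known classification of elements of $\mathrm{PSL}_2(\FF)$ over an algebraically closed field, combined with the structure theory of $\mathrm{PSL}_2(\FF_{p^k})$ due to Dickson that the statement explicitly references. Since $\FF$ is algebraically closed of characteristic $p \geq 3$, any matrix representing $A \in \mathrm{SL}_2(\FF)$ has a characteristic polynomial that splits, so it is either diagonalizable (two eigenvalues $\lambda, \lambda^{-1}$) or conjugate to a single Jordan block $\pm\left(\begin{smallmatrix} 1 & 1 \\ 0 & 1 \end{smallmatrix}\right)$ (the non-semisimple case, where the repeated eigenvalue must be $\pm 1$ to lie in $\mathrm{SL}_2$).

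First I would dispose of the diagonalizable case. If $A$ is diagonalizable with eigenvalue $\lambda \in \FF^\times$, then in $\mathrm{PSL}_2$ its order equals the multiplicative order of $\pm\lambda$ in $\FF^\times / \{\pm 1\}$. Because $\FF^\times$ is the multiplicative group of an algebraically closed field of characteristic $p$, every element of finite order lives in some finite subfield $\FF_{p^k}$, and the group of units $\FF_{p^k}^\times$ is cyclic of order $p^k - 1$, which is coprime to $p$. Hence the order of any diagonalizable element is automatically coprime with $p$, and such $A$ is diagonalizable by construction — this yields the first bulleted conclusion.

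Second I would treat the non-diagonalizable case. Here $A$ is conjugate to $\pm\left(\begin{smallmatrix} 1 & s \\ 0 & 1 \end{smallmatrix}\right)$ with $s \neq 0$. Computing powers of a unipotent matrix gives $\left(\begin{smallmatrix} 1 & s \\ 0 & 1 \end{smallmatrix}\right)^n = \left(\begin{smallmatrix} 1 & ns \\ 0 & 1 \end{smallmatrix}\right)$, so in characteristic $p$ this matrix has order exactly $p$ (the smallest $n$ with $ns = 0$ is $n = p$). The sign $\pm$ is killed in $\mathrm{PSL}_2$ when passing through the even power, and one checks directly that the projective order is still exactly $p$ since $p$ is odd. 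This gives the second bullet, that $\operatorname{order}(A) = p$ and $A$ has the displayed parabolic form.

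The dichotomy ``either coprime to $p$ or equal to $p$'' then follows immediately: a finite-order element is either semisimple (diagonalizable, order coprime to $p$) or unipotent-type (order exactly $p$), and these are the only two possibilities over an algebraically closed field. I expect the only mild subtlety — rather than a genuine obstacle — to be bookkeeping the passage between $\mathrm{SL}_2$ and $\mathrm{PSL}_2$ to confirm that the projective order of the parabolic element is genuinely $p$ and not $p/2$ or $2p$; since $p$ is odd and $-\operatorname{Id}$ has order $2$, the quotient by $\{\pm\operatorname{Id}\}$ does not alter the $p$-part of the order, so the claim holds. As the statement itself notes, everything reduces to the structure of $\mathrm{PSL}_2(\FF_{p^k})$ recorded in \cite{Dickson}, so I would keep the argument brief and cite that reference for the finite-subfield classification.
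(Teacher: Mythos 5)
Your proof is correct. The paper itself gives no argument for this lemma beyond the remark that it is ``a straightforward consequence of the structure of $\mathrm{PSL}_2(\FF_{p^k})$'' with a citation to Dickson, so there is no detailed proof to compare against; your Jordan-form dichotomy (semisimple versus a nontrivial unipotent block, the latter forced to have eigenvalue $\pm 1$ by the determinant condition) is exactly the standard elementary argument that the authors are implicitly invoking. The two points that need care are both handled: a root of unity in characteristic $p$ has multiplicative order prime to $p$ (your reduction to a finite subfield $\FF_{p^k}$ works, since a finite-order element of $\FF^\times$ is algebraic over $\FF_p$ even when $\FF$ is larger than $\Fp$; alternatively $(\lambda^m-1)^p=\lambda^{mp}-1$ kills any $p$-part directly), and the passage from $\mathrm{SL}_2$ to $\mathrm{PSL}_2$ cannot change the order of the parabolic element because $\left(\begin{smallmatrix}1&ns\\0&1\end{smallmatrix}\right)=-\operatorname{Id}$ is impossible for $p\geq 3$. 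If anything, your self-contained argument is preferable to the bare citation, since it avoids appealing to Dickson's classification for finite fields when the statement is really just linear algebra over an algebraically closed field.
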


\begin{Corollary}
For any prime $p$, write $\Delta_K(-1)=\pm p^k m$, with $m\geq 1$ an integer 
coprime with $p$. Then the reduction mod $p$ of $Y^{ab}(K)$ has cardinality 
$\frac{m-1}{2}$.
\end{Corollary}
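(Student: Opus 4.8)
The plan is to track what happens to the set $Y^{ab}(K)$ under reduction mod $p$. Recall from the discussion above that in characteristic zero, $Y^{ab}(K)$ consists of binary dihedral (equivalently, metabelian non-abelian) characters, and there is a clean correspondence between such characters and certain structures on $H_1(M_2,\ZZ)$. The key is that $\hat\rho'$, the restriction to $\pi_1(M_2)=H_1(M_2,\ZZ)$ (abelianized because $\hat\rho'$ is abelian here), is a homomorphism from the finite abelian group $H_1(M_2,\ZZ)$ into $\mathrm{PSL}_2(\FF)$. The cardinality count $\frac{|\Delta_K(-1)|-1}{2}$ over $\CC$ comes from counting nontrivial such homomorphisms up to the relevant equivalence (the factor $2$ accounts for the $\ZZ/2\ZZ$-action coming from the deck transformation / the binary dihedral symmetry that identifies a character with its ``inverse'').

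First I would make precise the bijection in characteristic zero: a nontrivial abelian $\hat\rho'$ sends the torsion group $H_1(M_2,\ZZ)$, of order $\alpha=|\Delta_K(-1)|$, into a maximal torus of $\mathrm{PSL}_2$, i.e.\ the image consists of diagonalisable elements of finite order. The distinct characters correspond to the nontrivial group homomorphisms $H_1(M_2,\ZZ)\to \FF^\times/\{\pm1\}$ modulo the inversion symmetry, giving $\frac{\alpha-1}{2}$ points. The point of the Lemma just proved is precisely to control which of these homomorphisms survive in characteristic $p$: an element of $\mathrm{PSL}_2(\FF)$ of finite order is either diagonalisable (order coprime to $p$) or has order exactly $p$ and is non-diagonalisable parabolic. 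Since $H_1(M_2,\ZZ)$ is a finite abelian group of odd order $\alpha$, any homomorphism into $\mathrm{PSL}_2(\FF)$ must land on elements of order dividing $\alpha$; writing $\alpha=p^k m$ with $m$ coprime to $p$, the factors of order a power of $p$ cannot be realized by diagonalisable elements, and the parabolic option of the Lemma is also incompatible because parabolics have order exactly $p$, forcing the whole $p$-primary part of $H_1(M_2,\ZZ)$ to map trivially.

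Hence in characteristic $p$ the homomorphisms $H_1(M_2,\ZZ)\to \FF^\times/\{\pm 1\}$ factor through the quotient $H_1(M_2,\ZZ)/(p\text{-primary part})$, a cyclic-type odd group of order $m$. Thus the number of nontrivial such homomorphisms up to inversion is $\frac{m-1}{2}$, which is exactly the claimed cardinality of the reduction of $Y^{ab}(K)$. I would organize this by: (i) identifying the reduction of $Y^{ab}(K)$ with the set of nontrivial abelian $\hat\rho'$ in characteristic $p$ modulo inversion; (ii) applying the Lemma to conclude these factor through the prime-to-$p$ part of $H_1(M_2,\ZZ)$; (iii) counting, noting that the order of the prime-to-$p$ part is $m$ and that $m$ is odd so $\frac{m-1}{2}$ is an integer.

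The main obstacle I anticipate is justifying that the \emph{scheme-theoretic} or set-theoretic reduction mod $p$ of $Y^{ab}(K)$ really coincides with the set of abelian-$\hat\rho'$ characters computed directly over $\FF$, rather than merely bounding the count. In other words, one must argue that no extra coincidences or collapses among the surviving $\frac{m-1}{2}$ homomorphisms occur, and that the $p$-primary directions genuinely contribute nothing (no parabolic ``order $1$'' phenomena sneaking in to create new characters). This is where the Lemma is essential and where I would be careful: the dichotomy it provides (order coprime to $p$, hence diagonalisable, versus order exactly $p$, hence parabolic) is exactly what rules out the $p$-part while keeping the prime-to-$p$ part in bijection with its characteristic-zero counterpart, since the cyclic group of order $m$ has the same number of characters into $\FF^\times$ as into $\CC^\times$ once $\FF$ is algebraically closed of characteristic not dividing $m$.
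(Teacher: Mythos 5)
Your overall strategy is the one the paper intends (the paper offers no written proof of this Corollary; it is meant to follow from the Lemma on finite-order elements together with the dihedral-character count $\frac{|\Delta_K(-1)|-1}{2}$ given just before), and your final count is correct. However, there is one step whose justification does not hold as you state it: you claim that the parabolic option of the Lemma is ``incompatible,'' so that the whole $p$-primary part of $H_1(M_2,\ZZ)$ is \emph{forced to map trivially}. That is not true at the level of homomorphisms: a generator of a cyclic factor $\ZZ/p^j$ can perfectly well be sent to a parabolic element of order exactly $p$ in $\mathrm{PSL}_2(\FF)$; the homomorphism is then simply non-injective on that factor, and nothing in the Lemma forbids it. So the assertion that every abelian $\hat\rho'$ factors through the prime-to-$p$ quotient needs a different argument.

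There are two standard ways to close this gap, and you should supply one of them. (a) Commutativity: if the prime-to-$p$ part of $H_1(M_2,\ZZ)$ maps to a nontrivial semisimple (diagonalisable, non-central) element, then its centralizer in $\mathrm{PSL}_2(\FF)$ is the maximal torus, which contains no nontrivial unipotents; since the representation is abelian, the $p$-primary part must land in that centralizer and hence die. (b) Characters rather than homomorphisms: even if some $p$-torsion is sent to nontrivial parabolics, the resulting representation is reducible and non-semisimple, and its \emph{character} coincides with that of its semisimplification, in which the unipotent contribution is replaced by the identity. Either way, the set of characters in the reduction of $Y^{ab}(K)$ is governed by homomorphisms of the prime-to-$p$ part (of order $m$) into $\FF^\times$, modulo inversion, giving $\frac{m-1}{2}$ as claimed; your remaining points (no extra collapses since $\mathrm{char}\,\FF\nmid m$ and $\FF$ is algebraically closed, and the identification of the reduction of $Y^{ab}(K)$ with the directly computed dihedral characters over $\FF$ via $X(\Gamma)$ being defined over $\ZZ$) are fine.
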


The previous corollary is an example of collapse when reducing mod $p$. Notice 
that when $m=1$, i.e.~when $\Delta_K(-1)=\pm p^k$, the reduction mod $p$ of 
$Y^{ab}(K)$ collapses to a single point, the abelian character in 
$Y^{triv}(K)$. Note that, in characteristic $p$, this is now also the character
of a reducible dihedral representation.

\medskip

Regarding $Y^{nab}(K)$, we are interested in the following cases for $K$:
\begin{enumerate}
\item $K$ a two bridge knot. Then $Y^{nab}(K)=\emptyset$ because $M_2$ is a 
lens space.
\item $K$ is a Montesinos knot. Then, as $M_2$ is Seifert fibred, $Y^{nab}(K)$ 
consists of representations whose projections into 
$\operatorname{PSL}_2(\mathbb{C})$ factor trough the base $2$-orbifold of the 
Seifert fibration. In particular for pretzel knots $Y^{nab}(K)$ is finite, 
because the variety of characters of triangle groups has finite cardinality.
\item For $\pi$-hyperbolic knots $ Y^{nab}(K)$ contains the lift of the 
holonomy of the orbifold with cone angle $\pi$. This is an isolated point of 
$Y(K)$ (Weil local rigidity), but there may be more characters in $Y^{nab}(K)$.
\end{enumerate}


\section{Two-bridge knots}
\label{sec:twobridge}

Let $K$ be a $2$-bridge knot. Recall that $K$ is determined by its 
\emph{$2$-bridge notation}, a rational number of the form $\beta/\alpha$, with 
$\alpha$ odd: one can recover a four-plat description of $K$ from a continued 
fraction expansion for $\beta/\alpha$, and the resulting knot does not depend 
on the chosen expansion. Moreover the $2$-fold branched cover $M_2$ of $K$ is 
precisely the lens space $L(\alpha,\beta)$ (see \cite[Ch 12]{BZ}). 


The fundamental group $\pi_1(\mathbf{S}^3\setminus K)$ of a $2$-bridge knot $K$ 
admits a presentation of the form $\langle a, b\mid a w= w b \rangle$, with $a$ 
and $b$ meridians and $w$ a word in the generators, $a$ and $b$, and their 
inverses.

Since $a$ and $b$ are conjugate, the character variety $X(K)$ of $K$ is a 
plane curve with coordinates $t= \tau_a=\tau_b$ and $x=\tau_{ab^{-1}}$:
$$
\{(t,x)\in\CC^2\mid P(t, x)(x-2)=0\},
$$
where $x=2$ is the set of characters of abelian representations, and its 
complement consists of characters of irreducible representations.

{
Notice that $P$ is an even polynomial of the variable $t$ (i.e.~a polynomial in 
$t^2$): there is a natural action of $H^1(S^3- K;\mathbb Z/2)\cong \mathbb Z/2$ 
on the variety of characters, that maps a point with coordinates $(t,x) $ to 
$(-t,x)$. By taking $T=t^2$ as a new variable, this defines the variety of 
$\mathrm{PSL}_2( \CC)$-characters \cite{AcunaMontesinos, HLMJKTR}.

Because of their simple structure, character varieties of $2$-bridge knots have
been widely studied, see for instance \cite{HLMJKTR, ORS, Oh, MPL, Chu},
see also \cite{HLMlinks} for $2$-bridge links.
}

Here, again, we are interested in $Y(K)$. We have:
$$
P(0,x)=\Phi_\alpha(x),\qquad \textrm{ where } 
\Phi_\alpha(\lambda+\frac{1}{\lambda})=
\frac{\lambda^\alpha-1}{\lambda-1} \lambda ^{\frac{1-\alpha}{2}}.
$$
Namely, the factors of $\Phi_\alpha$ define the intersection with $\RR$ of the 
corresponding cyclotomic extension.

\begin{Lemma}\label{lemma:2bridgeatt0}
For $p\mid \alpha$, write $\alpha= p^r\alpha'$, with $\alpha'\in \ZZ$ coprime 
with $p$.
\begin{enumerate}
\item $P(0,x)=0$ consists of $\frac{\alpha-1}{2}$ smooth points in $\mathbb{C}$.
\item The reduction mod $p$ of $P(0,x)=0$ consists of $\frac{\alpha'-1}{2}$ 
points in $\overline{\mathbb{F}_p}$. 
\item If $p^r\neq 3$, then the intersection between $P(t,x)=0$ and $x=2$ mod 
$p$ is not transverse.
\end{enumerate}
\end{Lemma}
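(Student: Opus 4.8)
The plan is to analyze the polynomial $\Phi_\alpha$ explicitly, using its factorization through cyclotomic polynomials, and to treat the three parts in sequence since each builds on understanding $\Phi_\alpha(x)$. The key is the substitution $x = \lambda + \lambda^{-1}$, under which $\Phi_\alpha$ is essentially $(\lambda^\alpha-1)/(\lambda-1)$ up to the normalizing factor $\lambda^{(1-\alpha)/2}$. The roots of $P(0,x)=0$ thus correspond to the values $x = \zeta + \zeta^{-1}$ where $\zeta$ ranges over the nontrivial $\alpha$-th roots of unity, paired as $\zeta \leftrightarrow \zeta^{-1}$; this accounts for the count $(\alpha-1)/2$ in part (1). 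For simplicity, I would first record that $\Phi_\alpha(x) = \prod_{d \mid \alpha,\, d>1} \psi_d(x)$ where each $\psi_d$ is the minimal-polynomial-type factor coming from primitive $d$-th roots of unity, and that over $\CC$ these roots are all distinct, hence smooth (simple) points.

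\emph{Part (1).} I would show the $(\alpha-1)/2$ roots are distinct by noting that the map $\zeta \mapsto \zeta + \zeta^{-1}$ is two-to-one on the nontrivial $\alpha$-th roots of unity (identifying $\zeta$ with $\zeta^{-1}$), and injective on the resulting classes because $\alpha$ is odd so $\zeta = \zeta^{-1}$ forces $\zeta = 1$. Distinctness of the roots of $\Phi_\alpha$ over $\CC$ is exactly smoothness of the zero-dimensional scheme $\{P(0,x)=0\}$, giving $\frac{\alpha-1}{2}$ smooth points.

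\emph{Part (2).} For the reduction mod $p$, I would use that $\lambda^\alpha - 1 = (\lambda^{\alpha'})^{p^r} - 1 = (\lambda^{\alpha'}-1)^{p^r}$ in characteristic $p$, since $\alpha = p^r \alpha'$. Therefore the distinct nontrivial roots of $\lambda^\alpha - 1$ collapse to the $\alpha'$-th roots of unity (each with multiplicity $p^r$), and by the same two-to-one argument as in part (1) — valid because $\alpha'$ is again odd — the distinct values of $x = \zeta + \zeta^{-1}$ number $\frac{\alpha'-1}{2}$. This explains the collapse phenomenon already flagged in the Corollary.

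\emph{Part (3).} This is the crucial statement and where the real work lies. Transversality of the intersection between $\{P(t,x)=0\}$ and $\{x=2\}$ at a common point means that the point $(0,2)$ (where $\Phi_\alpha(2) = \alpha$ vanishes mod $p$, since $\Phi_\alpha(2)$ corresponds to $\lambda = 1$ giving $\alpha$) is a simple root of $P(0,x)$ and that the component meets $x=2$ cleanly. The strategy is to examine the local behavior of $P(t,x)$ near $(0,2)$: I expect that the factor of $\Phi_\alpha(x)$ responsible for the root at $x=2$ acquires higher multiplicity mod $p$, or equivalently that the partial derivatives of $P$ at $(0,2)$ degenerate mod $p$. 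Concretely, I would compute $\frac{\partial}{\partial x}\Phi_\alpha$ at $x=2$ (i.e. at $\lambda = 1$) and show it is divisible by $\alpha$, hence vanishes mod $p$; the appearance of $\alpha$ here comes from differentiating $(\lambda^\alpha-1)/(\lambda-1)$ and evaluating at $\lambda=1$, which yields a value involving $\binom{\alpha}{2}$ or similar, and the exceptional case $p^r = 3$ arises precisely when this quantity fails to vanish. The main obstacle will be bookkeeping the chain rule through the substitution $x = \lambda + \lambda^{-1}$ (whose derivative $1 - \lambda^{-2}$ itself vanishes at $\lambda = 1$) together with the $P$-versus-$\Phi_\alpha$ relation, to pin down exactly which derivative of $P$ survives and to isolate the arithmetic condition $p^r = 3$ as the sole case where transversality persists.
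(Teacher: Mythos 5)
Your parts (1) and (2) follow the same route as the paper: the count $(\alpha-1)/2$ and its collapse to $(\alpha'-1)/2$ via $\lambda^{\alpha}-1=(\lambda^{\alpha'}-1)^{p^r}$ in characteristic $p$ are exactly what the paper invokes through the preceding Corollary, and simplicity of the roots of $\Phi_\alpha$ over $\CC$ gives smoothness. No issue there.

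For part (3) your overall strategy --- show that the partial derivatives of $P$ degenerate at $(0,2)$ mod $p$ --- is the right one, but the one concrete computation you commit to is wrong precisely where the hypothesis $p^r\neq 3$ must enter. You propose to show that $\Phi_\alpha'(2)$ ``is divisible by $\alpha$, hence vanishes mod $p$''; in fact $\Phi_\alpha'(2)=\alpha(\alpha^2-1)/24$ (check: $\alpha=3,5,9$ give $1,5,30$), so its $p$-adic valuation is $r$ for $p\neq 3$ but $r-1$ for $p=3$ because of the factor $3$ in the denominator $24$. Divisibility by $\alpha$ alone would give the conclusion even for $p^r=3$, i.e.\ your sketch proves too much and never locates the source of the exceptional case. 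The paper sidesteps this characteristic-zero computation entirely: it first reduces $\Phi_\alpha$ mod $p$ using the same Frobenius identity you already used in part (2), obtaining
$\Phi_\alpha(x)\equiv (x-2)^{(p^r-1)/2}\,\Phi_{\alpha'}(x)^{p^r} \pmod p$,
so the multiplicity of $x=2$ as a root of $P(0,x)$ mod $p$ is visibly $(p^r-1)/2$, which is $\geq 2$ exactly when $p^r\neq 3$; hence $\partial_xP(0,2)\equiv 0$. Combined with the structural fact, recorded earlier in the section, that $P(t,x)=\Phi_\alpha(x)+t^2Q(t^2,x)$ is even in $t$ --- which kills $\partial_tP(0,2)$ with no chain-rule bookkeeping through $x=\lambda+\lambda^{-1}$ --- and with $P(0,2)=\Phi_\alpha(2)=\alpha\equiv 0$, this exhibits $(0,2)$ as a singular point of $P=0$ mod $p$ lying on $x=2$, whence non-transversality. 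I recommend you replace your planned derivative computation by this mod-$p$ factorization; if you insist on the direct route you must carry the $24$ in the denominator all the way through, since it is the entire content of the $p^r\neq 3$ hypothesis.
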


\begin{proof}
The assertions on the cardinality are proved in the previous corollary. 
Smoothness is a consequence from the fact that the roots of $\Phi_\alpha$
are simple. It is also proved in \cite[Lemmas~4.4 and~4.5]{BoyerZhangJAMS}.

In terms of the polynomial $\Phi_\alpha$ we have:
$$
\Phi_\alpha(x)\equiv (x-2)^{\frac{p^r-1}{2}} (\Phi_\alpha(x))^{p^r}\mod p
$$

For assertion 3) use that
$$
P(t,x)=\Phi_ \alpha(x) + t^2 Q(t^2,x)
$$
and the previous congruence of $\Phi_\alpha(x)$ mod $p$.
\end{proof}

\begin{Remark}
{
One may ask whether this order-1 ramification phenomenon is a consequence of an
order-0 ramification phenomenon that occurs at the level of the character
variety $X(K)$, namely that different irreducible components of 
$\{(t,x)\in\CC^2\mid P(t, x)=0\}$ get identified together. 

More specifically, for $\alpha=p$ we may ask whether $P(t,x)$ can be a power 
mod $p$. As the characteristic $p$ is odd, if $P(t,x)$ were a power mod $p$, 
then it would be the power of an even polynomial in $t$. However in 
\cite[Proposition~5.2]{HLMJKTR} it is shown that the degree of $P(t,x)$ in 
$t^2$ is strictly less than the degree of $P(0,x)$, so this can never be the
case.
}
\end{Remark}

\begin{Remark}
\label{Remark:Discriminant}
Let $D$ be discriminant of $\Delta_K(t)$. Assume that $D\neq 0$. Then, in 
characteristic zero, all roots of $\Delta_K(t)$ are simple, and by \cite{HPS}  
the intersection of the abelian component $x=2$ with $P(t,x)=0$ is everywhere 
transverse. In characteristic $p$, if $p\nmid D$, then the intersection is 
still transverse, in particular  the points $(t,2)$ are smooth points of
$P(t,x)=0$. When $p\mid D$, new singularities or tangencies with $x=2$ may 
appear at those points. Notice also that $\Delta_K(-1)\mid D$.
\end{Remark}


In some instances, we may also count how many components $X(K)$ has:

\begin{Lemma} 
\label{lemma:irreducible}
If $\alpha$ is prime, then $P(t,x)$ is irreducible over $\mathbb Q$. 
If in addition $\frac{\alpha-1}2$ is also prime, then it is irreducible over 
$\mathbb C$. 
\end{Lemma}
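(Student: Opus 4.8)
The plan is to reduce both statements to two inputs: the classical irreducibility of $\Phi_\alpha$ over $\QQ$ when $\alpha$ is prime, and the degree bound $\deg_{t^2}P<\deg_x P(0,x)=\tfrac{\alpha-1}2$ already recorded in the Remark above (from \cite[Proposition~5.2]{HLMJKTR}). Throughout I will use that $P$ is monic of degree $d:=\tfrac{\alpha-1}2$ in the variable $x=\tau_{ab^{-1}}$, with $P(0,x)=\Phi_\alpha(x)$ and $P(t,x)=\Phi_\alpha(x)+t^2Q(t^2,x)$. First I would record that $\Phi_\alpha$ is irreducible over $\QQ$ for $\alpha$ prime: its roots are the numbers $2\cos(2\pi k/\alpha)$, $k=1,\dots,d$, which are exactly the Galois conjugates of the generator $2\cos(2\pi/\alpha)$ of the real cyclotomic field $\QQ(\zeta_\alpha)^+$, of degree $d$ over $\QQ$; since $\Phi_\alpha$ is monic of degree $d$ and has this number as a root, it is the minimal polynomial, hence irreducible.

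For the first assertion (irreducibility over $\QQ$) I would apply the reduction criterion modulo $t$ in the ring $\QQ[t][x]$. Since $P$ is monic in $x$ it is primitive over $\QQ[t]$ and its $x$-degree is preserved under the specialisation $t=0$; and $P\bmod t=P(0,x)=\Phi_\alpha(x)$ is irreducible over $\QQ=\QQ[t]/(t)$ by the previous step. Therefore any factorisation of $P$ in $\QQ[t,x]$ would reduce modulo $t$ to a factorisation of $\Phi_\alpha$ with degrees preserved, forcing one factor to be a constant; thus $P$ is irreducible in $\QQ[t,x]$. (Note this argument produces irreducibility in the full polynomial ring $\QQ[t,x]$, not merely in $\QQ[t^2,x]$, because reduction is taken modulo $t$.)

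For the second assertion, assume in addition that $d=\tfrac{\alpha-1}2$ is prime, and factor $P=P_1\cdots P_g$ into absolutely irreducible factors over $\CC$. By the $\QQ$-irreducibility just proved, $\operatorname{Gal}(\overline{\QQ}/\QQ)$ permutes the $P_i$ transitively, so they are pairwise conjugate and share a common $x$-degree $d/g$; hence $g\mid d$, and primality of $d$ gives $g\in\{1,d\}$. To exclude $g=d$: in that case each $P_i$ is a monic (in $x$) factor of degree $1$, so $P_i=x-\psi_i(t)$ with $\psi_i\in\CC[t]$, and the evenness of $P$ together with the simplicity of the roots of $\Phi_\alpha$ (so that the $\psi_i(0)$ are distinct) forces each $\psi_i\in\CC[t^2]$. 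Conjugacy gives all $\psi_i$ a common $t^2$-degree $\delta$, whence $\deg_{t^2}P=d\,\delta$. The bound $\deg_{t^2}P<d$ then forces $\delta=0$, i.e.\ $P$ would be independent of $t$; this is impossible for a nontrivial $2$-bridge knot, where $\deg_{t^2}P\ge 1$ (the meridian trace is genuinely non-constant on the non-abelian locus). Hence $g=1$ and $P$ is irreducible over $\CC$.

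The main obstacle I anticipate is the standing structural fact that $P$ is \emph{monic of degree exactly $d=\tfrac{\alpha-1}2$ in $x$}, with $x$-degree preserved at $t=0$: this is precisely what makes both the reduction criterion and the ``linear graph factor'' description $P_i=x-\psi_i(t^2)$ legitimate. I would establish it from the Chebyshev/recursive form of the trace relations for $2$-bridge groups (the same computation that yields $P(0,x)=\Phi_\alpha(x)$), or cite it. A secondary point to verify is the harmless lower bound $\deg_{t^2}P\ge 1$ for nontrivial knots, needed to rule out the degenerate case $P=\Phi_\alpha(x)$. It is worth stressing that both primality hypotheses are genuinely used: $\alpha$ prime gives irreducibility of $\Phi_\alpha$ hence of $P$ over $\QQ$, while $d$ prime reduces the geometric factorisation to the two cases $g\in\{1,d\}$, the second of which is killed by the degree bound.
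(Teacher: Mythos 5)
Your route is genuinely different from the paper's. For $\QQ$-irreducibility the paper uses the absence of ideal points over $t=0$ (so every component of $P=0$ meets $t=0$, where $P(0,x)=\Phi_\alpha(x)$ is irreducible), while you use reduction modulo $(t)$; that is fine granted the structural input that $\deg_xP=\frac{\alpha-1}2$ with leading $x$-coefficient a nonzero constant (a property of the Riley polynomial that you rightly flag, and which is visible in every example in the paper). For $\CC$-irreducibility the paper's key step is geometric: the two lifts to $\mathrm{SU}(2)\times\mathrm{SU}(2)$ of the spherical holonomy of the $\pi$-orbifold give two points of $Y^{ab}(K)$ on a common component (joined by a path of spherical/Euclidean cone-structure holonomies), after which Galois transitivity on $Y^{ab}(K)$ finishes. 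You replace this geometric input by Galois transitivity on the absolutely irreducible factors plus a degree count; that part is correct as far as it goes: $g\mid d$, $d$ prime gives $g\in\{1,d\}$, and in the case $g=d$ the normalisation $P_i=x-\psi_i(t^2)$ and the identity $\deg_{t^2}P=d\,\delta$ are sound.

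The gap is in the very last step, where you rule out $\delta=0$, i.e.\ $P=\Phi_\alpha(x)$. Your justification --- ``the meridian trace is genuinely non-constant on the non-abelian locus'' --- is a non sequitur: if $P=\Phi_\alpha(x)$ the non-abelian locus is the union of the lines $x=c_i$, on each of which $t$ ranges over all of $\CC$, so the meridian trace is non-constant there. Moreover the claim $\deg_{t^2}P\ge 1$ for a ``nontrivial'' $2$-bridge knot cannot substitute for a hyperbolicity hypothesis, because the lemma itself fails without one: for $T(2,5)$ both $\alpha=5$ and $(\alpha-1)/2=2$ are prime, yet $X(T(2,5))$ has two components of irreducible characters, so $P$ is reducible over $\CC$. (Relatedly, the quoted bound $\deg_{t^2}P<\deg_xP(0,x)$ is exactly what fails for $T(2,n)$: for the trefoil $P=x-t^2+1$.) The hyperbolicity of $K$, which the paper's proof uses through the path of cone structures, must therefore enter your argument at this point, and it does so easily: at the discrete faithful character one has $x_0=\operatorname{tr}\rho_0(ab^{-1})$ with $\rho_0(ab^{-1})$ parabolic or loxodromic, hence $x_0\notin(-2,2)$, whereas every root of $\Phi_\alpha$ equals $2\cos(2\pi k/\alpha)\in(-2,2)$; so the canonical component is not one of the lines $x=c_i$ and $P\neq\Phi_\alpha(x)$. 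With that substitution (and the monicity input supplied or cited), your proof closes.
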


\begin{proof}
The curve of characters $X(K)$ has no ideal points with $t=0$, by 
\cite[Lemma~A.1.1]{BoileauPorti}, hence every irreducible component of $X(K)$ 
intersects the line $t=0$ in at least one point. Thus, to prove that $P$ is 
irreducible (over $\QQ$ or over $\CC$) we need to check that all points in 
$$Y^{ab}(K)=\{(0, x)\mid P(0, x)=0\}$$
are contained in the same irreducible component (recall that here 
$Y^{nab}(K)=\emptyset$). As $P(0,x)=\Phi_\alpha(x)$ is irreducible over $\QQ$, 
because $\alpha$ is prime, $P=0$ has only one component over $\QQ$, so $P$ is 
irreducible over $\QQ$.

For the second assertion, we notice first that at least two points of 
$Y^{ab}(K)$ lie in the same component over $\CC$. Those points correspond to 
the lifts of the spherical holonomy of the orbifold with cone angle $\pi$. More 
precisely, the universal covering of $SO(4)$ is $SU(2)\times SU(2)$, and the 
lift of the holonomy yields two different characters in $Y^{ab}(K)$, one for 
the projection to each factor $SU(2)$, see for instance \cite{HLMTokyo}. 
In addition, there is a curve of smooth points that joins both characters 
\cite{BoileauPorti,Weiss, PortiWeiss, PortiKobe}, coming from holonomies of 
spherical and Euclidean cone structures, hence they lie in the same 
irreducible component.

Once we know that two points of $Y^{ab}(K)$ lie in the same component, we 
consider the action of the Galois group. If $P$ decomposes in $\CC$, it 
decomposes with coefficients in a number field  $\mathbb K$, that we may 
assume to contain the real cyclotomic field (containing the solutions of 
$\Phi_\alpha(z)$). The Galois group $\operatorname{Gal}(\mathbb K\mid\QQ)$ acts 
on $P=0$ by preserving or permuting irreducible components, in particular if
$P=0$ contains more than one irreducible component, then $Y^{ab}(K)$ has a 
nontrivial partition  induced by the components of $P=0$. As 
$\operatorname{Gal}(\mathbb K\mid\QQ)$ acts transitively on $Y^{ab}(K)$, all 
subsets of this partition have the same cardinality, which must be either $1$ 
or $(\alpha-1)/2$, because $(\alpha-1)/2$ is prime. It cannot be $1$ because we showed in the previous paragraph that the cardinality is at least $2$. Hence it 
is $(\alpha-1)/2$ and there is a single component.
\end{proof}

The reader should compare this result with \cite{Oh} where a condition is
provided for the character variety of a $2$-bridge knot to have at least two
irreducible components, besides the abelian one. 
 
\medskip

In the following we provide some examples of character varieties of $2$-bridge 
knots, for different values of $\alpha$, notably $\alpha$ a prime number, a
power of a prime number, and a product of different primes, in order to cover
potentially different kinds of behaviour. In the first three instances, we also
compute all ramified primes.

\begin{center}
  \textsc{Example: $6_1=7/9$ }
 \end{center}

From the presentation
$$
\pi_1(\mathbf{S}^3\setminus 6_1)=
\langle\alpha,\beta\mid \beta\alpha^{-1}\beta\alpha\beta^{-1}\alpha^{-1}\beta\alpha\beta^{-1}=
\alpha^{-1}\beta\alpha\beta^{-1}\alpha^{-1}\beta\alpha\beta^{-1}\alpha\rangle ,
$$
using coordinates
$$
t=\operatorname{tr}_\alpha=\operatorname{tr}_\beta,\qquad x=\operatorname{tr}_{\alpha\beta^{-1}},
$$
and taking traces of the terms in the relation, we easily compute
$P(t, x)$:
$$
{t}^{4}{x}^{2}-3\,{t}^{4}x-2\,{t}^{2}{x}^{3}+2\,{t}^{4}+2\,{t}^{2}{x}^
{2}+{x}^{4}+5\,{t}^{2}x+{x}^{3}-4\,{t}^{2}-3\,{x}^{2}-2\,x+1
$$

Even if $9$ is not prime, $P(t, x)$ is irreducible over $\mathbb{C}$, as we
will see in a while.
We give here a reason why it is irreducible over $\QQ$. We reproduce the 
argument in the proof of Lemma~\ref{lemma:irreducible}: setting $t=0$, we have
$$
P(0,x)=\Phi_{9}(x)= \Phi_3(x) \psi(x),
$$
where the solution of $\Phi_3(x)=x+1=0$ is $-1$, twice the real part of the primitive 
roots of unity of order $3$, and the solutions of $\psi(x)= x^3-3 x+1 =0$ are twice the 
real parts of primitive roots of unity of order $9$, see~\eqref{eqn:phi_25}. 
Thus, as there is no ideal point with $t=0$ \cite[Lemma~A.1.1]{BoileauPorti},
to prove irreducibly over $\QQ$ we must show that there are two points 
in the same component, with coordinates $(0,-1)$ and $(0,x_2)$, with
$\psi(x_2)=0$. Following the argument of
Lemma~\ref{lemma:irreducible}, those points are provided by the lift to
$\mathrm{SU}(2)\times \mathrm{SU}(2)$ of the holonomy of the spherical
orbifold. Up to conjugation, the lift of the holonomy of $\pi_1(M_2)$ is a
cyclic group generated by
$$
\left(
\begin{pmatrix}
 e^{i\,\theta_1/2} & 0 \\
 0 &  e^{i\,\theta_1/2}
\end{pmatrix}
, 
\begin{pmatrix}
 e^{i\,\theta_2/2} & 0 \\
 0 &  e^{i\,\theta_2/2}
\end{pmatrix}
\right)
$$
By the description of the action of $\mathrm{SU}(2)\times \mathrm{SU}(2)$ on
$\mathrm{SU}(2)\cong S^3$ \cite{Montesinos}, and as $M_2\cong L(9,7)$, 
$\theta_1$ and $\theta_2$ must satisfy
$$
\theta_2-\theta_1= \frac{1}{9}2\pi,\qquad \theta_1+\theta_2=\frac{7}{9}2\pi.
$$
Hence
$$
\theta_1=\frac{1}{3} 2\pi \qquad  \theta_2=\frac{2}{9} 2\pi ,
$$
and $x_1=2\cos\theta_1=-1$ and $x_2=2\cos\theta_2$ are the values we are looking
for.

Observe also that the action of the Galois group permutes the points $(0,x_2)$
with $x_2^3-3 x_2+1=0$, while fixing the point $(0,-1)$, so there is a single 
component over $\CC$.

As just observed, for $t=0$ we have:
$$
P(0,x)=\Phi_3(x) \psi(x) = \left( x+1 \right)  \left( {x}^{3}-3\,x+1 \right). 
$$
According to Theorem~\ref{th:bridge}, $Y(K)$ collapses to a single point 
mod 3, because
$$
P(0,x)\cong (x-2)^4 \mod 3.
$$
Note that the curve of irreducible characters meets the line of abelian ones in
two points with multiplicity $2$: these are computed as the solutions of the
equation $0=P(t,2)=-2t^2+9$. The two components also have an intersection at 
infinity, also of multiplicity $2$. The two (finite) intersections collapse to 
a single one in characteristic $p=3$ (and in this characteristic only), and the 
two components become tangent. The latter assertion is easily seen by rewriting 
$P(t,x)=t^4[(x-2)^2+(x-2)]+t^2[2(x-2)^3-10(x-2)^2+11(x-2)+42]+
(x+1)[(x-2)^3+6(x-2)^2+9(x-2)+3]$ so that one also observes that the component
is not smooth at this point in characteristic $p=3$.

We want to show that $p=3$ is the only ramified prime for the character variety
of the knot $6_1$. Clearly, for no prime the dimension of the variety can
decrease or increase and it will always consists of at least two irreducible
components. We will prove that the component defined by $P(t,x)=0$ is smooth
over $\CC$, and so is the corresponding component in any characteristic $p>3$. 

The partial derivatives are 
$$\partial_t P(t,x)=2t[t^2(2x^2-6x+4)-2x^3+2x^2+5x-4]$$
and
$$\partial_x P(t,x)= t^4(2x-3)+t^2(-6x^2+4x+5)+4x^3+3x^2-6x-2.$$
Assume first that $t=0$. In this case we need to find the common solutions of
$P(0,x)=0$ and $\partial_x P(0,x)= 0$: the discriminant of $P(0,x)$ is $3^6$ so 
these two polynomials have a common root only if $p=3$ and we already know what 
happens in that case. 

We can thus assume that $t\neq 0$. Consider now the polynomial $(2x^2-6x+4)$:
its roots are $1$ and $2$. A computation shows that $\partial_t P(t,1)$ and 
$\partial_t P(t,2)$ cannot be zero in characteristic $0$ or odd.

We can thus multiply $\partial_x P(t,x)$ times $(2x^2-6x+4)^2$ in order to
eliminate $t$, and we get $\partial_x P(t,x)(2x^2-6x+4)^2=3x^2-4x$. 

Assume $x=0$. In this case $t^2=1$ and $P(1,0)=-1$, so this case cannot arise.

If $p=3$, then again we must have $x=0$ and the preceding reasoning applies.

We can thus assume that $p\neq 3$ and $x=4/3$ and $t^2=10/9$ and again 
$P(4/3,10/9)$ can never be $0$ for any $p$. 

We now will show that the variety consists of precisely two irreducible
components in every characteristic ($\neq 2$). Obviously the abelian component,
being a line, can never split into more than one component. It is thus enough
to prove that $P(t,x)$ is always irreducible. Assume by contradiction that
curve corresponding to $P(t,x)=0$ is not irreducible. Its irreducible 
components must intersect in the projective closure in some singular points, 
since we are considering algebraically closed fields. Similarly, every 
irreducible component must meet the line at infinity. We now observe that our 
curve intersects the line at infinity in two points, one with (homogeneous) 
coordinates equal to $t=1$ and $x=0$ and a second one with coordinates $t=0$ 
and $x=1$. The first point has multiplicity four and is smooth with tangent the 
line at infinity in any characteristic $\neq 2$. The second one has multiplicity 
two and is a singular point, again with tangent the line at infinity. Since the 
first point is smooth, it must belong to a single irreducible component. If 
$p\neq 3$ this irreducible component must also contain the second point at 
infinity, which is the only singular point of the curve. Because of the 
multiplicities of intersection, this irreducible component must be a curve of 
degree at least five. It follows that if there were a second component it 
should be a line, passing through the second point and tangent to the line at 
infinity. This is however absurd since the component cannot be the line at 
infinity. If $p=3$, the component containing the smooth point at infinity 
might have degree $4$, since there are two singular points. We consider then 
its intersection with the line of equation $x=2$. Because of its degree, this 
component meets the line at the singular point $(t=0,x=2)$. If the multiplicity 
of intersection were $4$, then the component could not intersect the line 
elsewhere and thus it could not intersect the remaining irreducible 
component(s) since it could not pass through the second singular 
point which is also on this line. This shows that the component must pass
through the second point at infinity even in this case and the conclusion
follows as in the previous one. 

\begin{center}
  \textsc{Example: $4_1=4/5$ }
 \end{center}

We already discussed the character variety of the figure-eight knot in the
introduction. It is well-known that it consists of just two irreducible
components: the abelian one and Thurston's excellent one. This is also a
consequence of Proposition~\ref{p:irred} since both $\alpha=5$ and 
$(\alpha-1)/2=2$ are prime numbers.

The same type of arguments and computations used in the previous example, show
that in this case, too, there is a single odd prime for which the character
variety ramifies, that is $p=5$. Recall that in this case we have 
$$P(t,x)=(x^2+(1-t^2)x+t^2-1)$$
while the partial derivatives are
$$\partial_t P(t,x)=2t[x-1]$$
and
$$\partial_x P(t,x)= 2x-t^2+1.$$
We see that singular points can only occur for $t=0$ or $x=1$. The latter case, 
is impossible as $P(t,1)=1$ for any $t$. In the former case we must have 
$2x+1=0$ or equivalently $x=-1/2$ since we are assuming $p\neq 2$. This gives 
$P(t,x)=-5/4$, so we have singular points only for $p=5$. This is precisely the 
situation described by Theorem~\ref{th:bridge}. 

To see that the reduction mod $p$ of the polynomial $P(t,x)$ is irreducible for
all $p\ge 3$, one observes that the points at infinity of $P(t,x)$ are both
smooth. As a consequence, $P(t,x)$ can only split if $p=5$ and at most into two
components. Since the degrees of the two irreducible components must be $2$ and 
$1$, the component of degree one is necessarily one of the tangents  at the
singular point $(t=0,x=1)$, however the two tangents do not meet the line at
infinity at the same points as $P(t,x)$. The computational details are left to 
the reader. 

\begin{center}
  \textsc{Example: $7_4=11/15$ }
 \end{center}

The character variety of this twist knot was computed and studied in
\cite{Chu} by Chu who was interested in understanding intersections between
different, non abelian, irreducible components. Indeed, in this case the
variety has two irreducible components, besides the abelian one, as shown by
\cite{Oh,MPL}.

Rewriting the equations in \cite{Chu} using our notation, we have
$$P(t,x)=(-1+2x^2+x^3-x^2t^2)(1+4x-4x^2-x^3+x^4-2xt^2+3x^2t^2-x^3t^2).$$

Theorem~\ref{th:bridge} tells us that $p=3$ and $p=5$ are ramified primes for
this variety. We wish to show that these are the only primes that ramify.

We start by studying each irreducible component separately. We begin with 
$$P_1(t,x)=-1+2x^2+x^3-x^2t^2$$
whose partial derivatives are
$$\partial_t P_1(t,x)=-2tx^2$$
and
$$\partial_x P_1(t,x)= -2xt^2+4x+3x^2.$$
We see that both derivatives are $0$ if $x=0$ but $P_1(t,0)=-1$, so we can
exclude the value $x=0$. It follows that we must have $t=0$ and $4+3x=0$. If
$p=3$ there is no solution, so we can assume $p\neq 3$ and $x=-4/3$. We get
$P_1(0,-4/3)=-5/27$ so, as expected, $p=5$ is a ramified prime. Writing
$P_1(t,x)$ as $-t^2(x-2)^2-4t^2(x-2)-4t^2+(x-2)^3+8(x-2)^2+20(x-2)+50$, one
sees that, in characteristic $p=5$, the tangent at the singular point $t=0$,
$x=-4/3\equiv 2$ has equation $t^2-2(x-2)^2=0$, where coefficients are thought 
mod $5$.

To understand whether this component splits in some characteristic, we now
analyse its points at infinity. There are two such points, both intersecting
the line at infinity with multiplicity $2$. One of them, with homogeneous 
coordinates $t=0$ and $x=1$, is smooth with tangent the line at infinity, while
the other, with homogeneous coordinates $t=1$ and $x=0$, is singular with
tangent of equation $-x^2=0$. Consider now the irreducible component containing
the smooth point at infinity. Since the curve only meets the line of equation
$x=0$ at infinity, such irreducible component passes through both points at
infinity and had degree at least $3$. If it had degree $3$, the second
irreducible component would have degree one and would have to coincide with the
tangent at the singular point, that is the line of equation $x=0$. This is
clearly impossible, so this curve is irreducible in all characteristics. 

We pass now to the second component
$$P_2(t,x)=1+4x-4x^2-x^3+x^4-2xt^2+3x^2t^2-x^3t^2$$
whose partial derivatives are
$$\partial_t P_2(t,x)=2t(-2x+3x^2-x^3)$$
and
$$\partial_x P_2(t,x)= 4-8x-3x^2+4x^3+t^2(-2+6x-3x^2).$$

From the first derivative, we see that either $x\in\{0,1,2\}$ or $t=0$. For
each value of $x$ in $\{0,1,2\}$ we get $P_2(t,x)=1$, so we must have $t=0$.
Computing the resultant of the polynomials $1+4x-4x^2-x^3+x^4$ and 
$4-8x-3x^2+4x^3$ one obtains $3^25^3$: as expected the curve has singular
points in characteristics $p=3$ and $p=5$.

As for the previous component, we show that this component as well is always
irreducible. As before we consider the points at infinity of the curve: again
we have two points. The first one has homogeneous coordinates $t=0$ and $x=1$,
it is smooth with tangent the line at infinity, and the multiplicity of
intersection between the curve and the line at infinity at this point is $2$.
The second point has homogeneous coordinates $t=1$ and $x=0$, it is a singular
point with tangent of equation $x^3=0$ and multiplicity of intersection $3$.
The very same argument seen in the previous case, shows that the irreducible
component containing the smooth point at infinity must pass through the second
one as well and have degree at least $3$. If the curve splits in two or more
irreducible components, the component of degree at most two must be pass
through the singular point at infinity with tangent of equation $x=0$. As a
consequence such component can only be the line $x=0$, with multiplicity one or
two. This is however impossible.

We turn now our attention to the points of intersection between the two
components. Since $x$ cannot be equal to $0$, we can use the $P_1(t,x)$ to
eliminate $t^2$ from $P_2(t,x)$. We obtain the polynomial $x^2-2x+2$ that
always has two roots in any characteristic different from $2$ (cfr.
\cite{Chu}). 

 \begin{center}
  \textsc{Example: $8_9=7/25$}
 \end{center}

From the presentation 
$$
\pi_1(\mathbf{S}^3\setminus 8_9)=\langle  \alpha, \beta\mid  
\beta \alpha^{-1} \beta \alpha \beta^{-1} \alpha^{-1} \beta \alpha \beta^{-1}= 
\alpha^{-1} \beta \alpha \beta^{-1} \alpha^{-1} \beta \alpha \beta^{-1} \alpha  
\rangle
$$
by taking traces and using the coordinates 
$t=\operatorname{tr}_\alpha=\operatorname{tr}_\beta$, 
$x=\operatorname{tr}_{\alpha\beta^{-1}}$
with the help of symbolic software
we get
\medskip

\noindent
$P(t,x)=
{t}^{18}{x}^{3}-6\,{t}^{18}{x}^{2}-9\,{t}^{16}{x}^{4}+12\,{t}^{18}x+47
\,{t}^{16}{x}^{3}+36\,{t}^{14}{x}^{5}-8\,{t}^{18}-66\,{t}^{16}{x}^{2}-
160\,{t}^{14}{x}^{4}-84\,{t}^{12}{x}^{6}-12\,{t}^{16}x+115\,{t}^{14}{x
}^{3}+308\,{t}^{12}{x}^{5}+126\,{t}^{10}{x}^{7}+56\,{t}^{16}+273\,{t}^
{14}{x}^{2}+35\,{t}^{12}{x}^{4}-364\,{t}^{10}{x}^{6}-126\,{t}^{8}{x}^{
8}-232\,{t}^{14}x-928\,{t}^{12}{x}^{3}-441\,{t}^{10}{x}^{5}+266\,{t}^{
8}{x}^{7}+84\,{t}^{6}{x}^{9}-140\,{t}^{14}+187\,{t}^{12}{x}^{2}+1515\,
{t}^{10}{x}^{4}+763\,{t}^{8}{x}^{6}-112\,{t}^{6}{x}^{8}-36\,{t}^{4}{x}
^{10}+756\,{t}^{12}x+630\,{t}^{10}{x}^{3}-1362\,{t}^{8}{x}^{5}-679\,{t
}^{6}{x}^{7}+20\,{t}^{4}{x}^{9}+9\,{t}^{2}{x}^{11}+124\,{t}^{12}-1648
\,{t}^{10}{x}^{2}-1755\,{t}^{8}{x}^{4}+647\,{t}^{6}{x}^{6}+345\,{t}^{4
}{x}^{8}+2\,{t}^{2}{x}^{10}-{x}^{12}-725\,{t}^{10}x+1796\,{t}^{8}{x}^{
3}+1964\,{t}^{6}{x}^{5}-108\,{t}^{4}{x}^{7}-95\,{t}^{2}{x}^{9}-{x}^{11
}+42\,{t}^{10}+1744\,{t}^{8}{x}^{2}-924\,{t}^{6}{x}^{4}-1163\,{t}^{4}{
x}^{6}-27\,{t}^{2}{x}^{8}+11\,{x}^{10}-93\,{t}^{8}x-2206\,{t}^{6}{x}^{
3}+76\,{t}^{4}{x}^{5}+358\,{t}^{2}{x}^{7}+10\,{x}^{9}-118\,{t}^{8}-29
\,{t}^{6}{x}^{2}+1544\,{t}^{4}{x}^{4}+120\,{t}^{2}{x}^{6}-45\,{x}^{8}+
422\,{t}^{6}x+225\,{t}^{4}{x}^{3}-565\,{t}^{2}{x}^{5}-36\,{x}^{7}+25\,
{t}^{6}-560\,{t}^{4}{x}^{2}-201\,{t}^{2}{x}^{4}+84\,{x}^{6}-85\,{t}^{4
}x+326\,{t}^{2}{x}^{3}+56\,{x}^{5}+25\,{t}^{4}+95\,{t}^{2}{x}^{2}-70\,
{x}^{4}-46\,x{t}^{2}-35\,{x}^{3}-6\,{t}^{2}+21\,{x}^{2}+6\,x-1
$

\

It intersects $t=0$ at

\begin{equation}
 \label{eqn:phi_25}
 \left( {x}^{2}+x-1 \right)  \left( {x}^{10}-10\,{x}^{8}+35\,{x}^{6}+
{x}^{5}-50\,{x}^{4}-5\,{x}^{3}+25\,{x}^{2}+5\,x-1 \right) 
\end{equation}
Which corresponds to roots of order 5 and 25, respectively.
This polynomial is congruent to 
$$
(x-2)^{12}\mod 5
$$

Thus the whole $Y(K)$ collapses to a single point when reducing mod $5$.

Unexpectedly, $P(t,x)$ factors non trivially mod $7$ (but not mod $5$):

\begin{align*}
 P(t,x)= & ( {t}^{12}{x}^{2}+3\,{t}^{12}x+{t}^{10}{x}^{3}+4\,{t}^{12}+6\,{t
}^{10}{x}^{2}+{t}^{8}{x}^{4}+6\,{t}^{10}x
+2\,{t}^{8}{x}^{3}+{t}^{6}{x}
^{5}+5\,{t}^{10}
\\
& \qquad
+6\,{t}^{8}{x}^{2}+5\,{t}^{6}{x}^{4}+{t}^{4}{x}^{6}+6
\,{t}^{8}x+4\,{t}^{6}{x}^{3}+{t}^{4}{x}^{5}+{t}^{2}{x}^{7}+2\,{t}^{8}+
{t}^{6}{x}^{2}+4\,{t}^{2}{x}^{6}
\\
& \qquad
+{x}^{8}+4\,{t}^{6}x+2\,{t}^{4}{x}^{3}
+{t}^{2}{x}^{5}+5\,{t}^{6}+4\,{t}^{4}{x}^{2}+{t}^{4}x+4\,{t}^{4}+4\,{t
}^{2}{x}^{2}+4\,{x}^{4}+5\,x{t}^{2}
\\
& \qquad
+4\,{x}^{3}+5\,{t}^{2}+5\,{x}^{2}+2
\,x+6 ) \\
& \times
( {t}^{6}x+5\,{t}^{6}+4\,{t}^{4}{x}^{2}+3\,{t}^{4}
x+3\,{t}^{2}{x}^{3}+6\,{t}^{4}+6\,{x}^{4}+4\,x{t}^{2}+6\,{x}^{3}
\\
& \qquad
+4\,{t
}^{2}+4\,{x}^{2}+3\,x+1 ) \qquad\qquad \mod 7
\end{align*}

Using symbolic software again, we may check that in characteristic zero 
$P(t,x)=0$ has no singular points. In fact, besides characteristic $p=5$ and 
$p=7$, in characteristic $p=23$ one finds that $(t,x)=(2,\pm 6)$ is a singular 
point of the variety. Notice that the discriminant of the Alexander polynomial
is $13225=5^2\, 23^2$, which, by Remark~\ref{Remark:Discriminant}, implies that 
there are singular points with $x=2$ in characteristic $p$ at most for $p=5$ 
and $p=23$.

\medskip

\begin{center}
  \textsc{Example: $11/23$ }
 \end{center}

In this case the order of the homology of the double branched cover is a prime
number and moreover we have that $11=(23-1)/2$ is also prime. We mainly study
this twist knot because its character variety appears as a subvariety of the
Whitehead double we study in Section~\ref{sec:sat}. Indeed, there is a degree-one map
from the Whitehead double to this knot obtained by sending the trefoil
companion onto the trivial knot. Of course the map induces a surjection of
fundamental groups and so an injection of character varieties. 
 
We give here a presentation obtained from a presentation of the Whitehead
link $5_1^2$ by surgery on one of its two components. This will become handy 
when studying the Whitehead double.

We have for the Whitehead link
$$
\pi_1(\mathbf{S}^3\setminus 5_1^2)=
\langle \alpha, b \mid \alpha b^{-1}\alpha^{-1}b\alpha^{-1}b^{-1}\alpha=
b^{-1}\alpha b^{-1}\alpha^{-1}b \alpha^{-1}b^{-1}\alpha b\rangle ,
$$
and for the twist knot
$$
\pi_1(\mathbf{S}^3\setminus 11/23)=
\langle\alpha,\beta \mid u=\beta^{-1}\alpha, b=u^6, 
u=\alpha b^{-1}\alpha^{-1}b \alpha^{-1}b^{-1}\alpha b, u=b^{-1}ub \rangle ,
$$
where the group of the twist knot is obtained by killing by Dehn surgery the
slope $u^6b^{-1}$, where $u=\alpha b^{-1}\alpha^{-1}b \alpha^{-1}b^{-1}\alpha
b$, so that the relation of the Whitehead link group becomes 
$u=b^{-1}ub$ which is redundant in the quotient group; finally one can recover
a standard $2$-bridge presentation for the twist knot by observing that
$\beta=\alpha u^{-1}$ is conjugate to $\alpha$ hence a meridian that can be 
chosen as a generator of the group.

Hence we take coordinates 
$$
t=\operatorname{tr}_{\alpha}=\operatorname{tr}_{\beta}, \qquad 
x=\operatorname{tr}_{\alpha^{-1}\beta},
$$
and by taking traces on the equality
$$
\alpha^{-1} ( \beta\alpha^{-1} )^5 \beta\alpha ( \beta^{-1}\alpha)^5 \beta
=
(\beta\alpha^{-1})^5(\beta\alpha)(\beta^{-1}\alpha) ^5
$$
we get
$$
(x-2) P(t,x)=0
$$
where
\begin{multline*}
P(t,x)=
{t}^{2}{x}^{10}-{t}^{2}{x}^{9}-{x}^{11}-8\,{t}^{2}{x}^{8}-{x}^{10}+7\,
{t}^{2}{x}^{7}+10\,{x}^{9}+22\,{t}^{2}{x}^{6}+9\,{x}^{8}
\\
-16\,{t}^{2}{x
}^{5}
-36\,{x}^{7}-24\,{t}^{2}{x}^{4}-28\,{x}^{6}+13\,{t}^{2}{x}^{3}+56
\,{x}^{5}
\\
+9\,{t}^{2}{x}^{2}+35\,{x}^{4}-3\,{t}^{2}x-35\,{x}^{3}-15\,{x
}^{2}+6\,x+1 .
\end{multline*}
with
$$
P(0,x)\equiv -(x-2)^{11}\mod 23
$$
An analysis similar to those seen for the first three examples shows that
$p=23$ is the only prime that ramifies for this knot.


\section{A pretzel example}
\label{sec:pretzel}

For a Montesinos knot $K$ that is not a $2$-bridge one, 
$Y^{nab}(K)\neq\emptyset$, as it contains irreducible representations of the 
double branched cover $M_2$. Since $M_2$ is Seifert fibered, those 
representations from $\pi_1(M_2)$ to $\mathrm{PSL}_2(\mathbb{C})$ map the 
fibre to the identity (the centre of $\mathrm{PSL}_2(\mathbb{C})$ is trivial), 
hence they are irreducible representations of the $2$-orbifold, the space of 
fibres. For a Montesinos knot with $k$ tangles, this will yield components in 
$Y^{nab}(K)$ of dimension up to $k-3$, see for instance \cite{PP}. For a 
pretzel knot, we have $k=3$.

 \begin{center}
  \textsc{Example: $8_5=P(3,3,2)$ }
 \end{center}

Here $\Delta_K(-1)=21$, hence $\vert Y^{ab}(K)\vert= 10$. If we restrict these 
characters to $M_2$, they are characters of ten cyclic representations 
$H_1(M_2,\ZZ)\to\operatorname{PSL}_2(\CC)$, one of them has order $3$, three  
have order $7$, and six have order $21$ (the cardinality is one half of 
Euler's $\phi$).

Since $K$ is the pretzel $(2,3,3)$, elements  in $Y^{nab}(K)$ are 
characters of representations in $\mathrm{SU}(2)$.

We will consider reductions mod $p$ for $p=3,7$.

\bigskip

 \begin{figure}[h]
 \begin{center}
  \begin{tikzpicture}[scale=.6,line cap=round]
\begin{scope}
    \braid[line width=1pt,  color=black] s_1^{-1} s_1^{-1} ;
\end{scope}
\begin{scope}[shift={(2,0)}]
    \braid[line width=1pt,  color=black] s_1^{-1} s_1^{-1} s_1^{-1};
\end{scope}
\begin{scope}[shift={(4,0)}]
    \braid[line width=1pt,  color=black] s_1^{-1} s_1^{-1} s_1^{-1};
\end{scope}
   \draw [thick, color=black] (3,0) arc [radius=.5, start angle=0, end angle= 180];
   \draw [thick, color=black] (5,0) arc [radius=.5, start angle=0, end angle= 180];
   \draw [thick, color=black] (6,0) arc [radius=1, start angle=0, end angle= 90];
    \draw [thick, color=black] (2,1) arc [radius=1, start angle=90, end angle= 180];
    \draw [thick, color=black] (2,1) -- (5,1);
    \draw [thick, color=black] (1,-2.5) -- (1,-3.5);
    \draw [thick, color=black] (2,-2.5) -- (2,-3.5);    
   \draw [thick, color=black] (4,-3.5) arc [radius=.5, start angle=180, end angle= 360];
   \draw [thick, color=black] (2,-3.5) arc [radius=.5, start angle=180, end angle= 360];
   \draw [thick, color=black] (1,-3.5) arc [radius=1, start angle=180, end angle= 270];
     \draw [thick, color=black] (2,-4.5) -- (5,-4.5);
     \draw [thick, color=black] (5,-4.5) arc [radius=1, start angle=270, end angle= 360];
\draw [color=blue] (3.5,-4.9)--(3.5,-4.6);  
\draw [->, color=blue] (3.5,-4.4)--(3.5,-4.1);  
\draw [color=blue] (4.5,-4.4)--(4.5,-4.1); 
\draw [->, color=blue] (4.5,-3.9)--(4.5,-3.6); 
\draw [color=blue] (2.5,-3.6)--(2.5,-3.9); 
\draw [->, color=blue] (2.5,-4.1)--(2.5,-4.4); 
 \node [color=blue] at (3.85,-4.85) {$\alpha$};
 \node [color=blue] at (2.5,-3.3) {$\beta$};
 \node [color=blue] at (4.5,-3.3) {$\gamma$};
\draw [thick, color=red] (5,-1.25)--(6,-1.25); 
 \end{tikzpicture}
 \end{center}
  \caption{The knot $8_{5}$. The generators $\alpha$, $\beta$ and $\gamma$ of 
its fundamental group and a tunnel.} 
 \label{fig:8_5}
\end{figure}
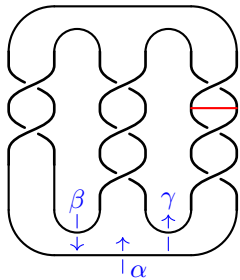

The group of $8_5$ is generated by three meridians, $\alpha$, $\beta$ and 
$\gamma$ in Figure~\ref{fig:8_5}. Since it has tunnel number one, the group can 
also be generated by two elements. Those are $\gamma$ and 
$a=\gamma^{-1}\alpha^{-1} (\beta\gamma)^2 $, see Appendix~\ref{appendix:8_5}. 
Hence we choose coordinates:
\begin{eqnarray*}
t& = \operatorname{tr}_\gamma\\
x&= \operatorname{tr}_a\\
y& = \operatorname{tr}_{\gamma a}
\end{eqnarray*}

With the help of symbolic software, there are 5 components:

\begin{enumerate}
 \item Abelian component
\begin{eqnarray*}
x = & t^2-2 \\
y= & t^3-3 t,
\end{eqnarray*}
because $x$ and $y$ correspond to traces of elements whose abelianisations are 
2 and 3 respectively.

\item A ``trefoil'' component:
$$
y=x-1=0
$$
namely, a component that looks like the variety of irreducible characters of 
the trefoil knot and which is  induced by an epimorphism onto the trefoil 
group.

\item The canonical component. 

\begin{eqnarray*}
t{x}^{3}-{t}^{2}y-t{y}^{2}-{x}^{2}y+2\,tx+2\,yx-t-y
 & =0  ,
\\
{t}^{2}yx+{y}^{2}tx-2\,t{x}^{2}-t{y}^{2}-{x}^{2}y-{y}^{3}-tx+yx+t+2\,y
 & =0  ,
\\
t{y}^{3}x+t{x}^{2}y-{y}^{2}{x}^{2}-{y}^{4}-2\,txy-2\,{x}^{3}-x{y}^{2}-
ty+2\,{x}^{2}+3\,{y}^{2}+4\,x-2
& = 0.
\end{eqnarray*}

\item An exotic component:
$$
x+1=ty+{y}^{2}-2=0
$$

\item A second exotic component:
\begin{eqnarray*}
{t}^{2}{x}^{2}y-t{x}^{3}-2\,{y}^{2}tx+{x}^{2}y+{y}^{3}+2\,tx-t-3\,y  & =0 , \\
{t}^{3}yx+{t}^{2}x{y}^{2}+t{y}^{3}x-{t}^{3}y-{t}^{2}{x}^{2}-2\,{t}^{2}
{y}^{2}
-t{x}^{2}y-t{y}^{3}-{y}^{2}{x}^{2}
 \qquad \qquad \quad
 &\\
-{y}^{4}+{t}^{2}+4\,ty+4\,{y}
^{2}  & =0 ,  \\
 t{x}^{5}y+t{x}^{4}y-{x}^{6}-{x}^{4}{y}^{2}+2\,{t}^{2}x{y}^{2}-4\,ty{x}
 ^{3}+2\,t{y}^{3}x-{x}^{5}-{x}^{3}{y}^{2}  \qquad \qquad \quad
  &\\
 -{t}^{3}y-{t}^{2}{y}^{2} 
 -7\,t{
 x}^{2}y-2\,t{y}^{3}+6\,{x}^{4}+{y}^{2}{x}^{2}
  -2\,{y}^{4
  }  
  +{t}^{2}x+6\,{
 x}^{3}
   \qquad  \quad
  &\\
 +3\,x{y}^{2}+{t}^{2}+8\,ty-8\,{x}^{2}+7\,{y}^{2}-8\,x-1 & = 0
\end{eqnarray*}

\end{enumerate}

Next we describe the intersection of each component  with $t=0$:
\begin{enumerate}
\item Abelian: $ x=-2$, $y=0$. This is the point in $Y^{triv}(K)$

\item Trefoil: $y=0$, $x=1$. This point lies in $Y^{ab}(K)$ (it corresponds to 
the element of order 3).

\item Canonical: it is the union of two sets:
\begin{itemize}
 \item  $y=x^3-x^2-2 x+1=0$ that lies in $Y^{ab}(K)$ (3 points of order 7), and
 \item $x-1=y^2-2=0$, two points in $Y^{nab}(K)$.
\end{itemize}
\item Exotic:  $x+1=y^2-2=0$, two points in $Y^{nab}(K)$.

\item Second Exotic:  
$y=-{x}^{6}-{x}^{5}+6\,{x}^{4}+6\,{x}^{3}-8\,{x}^{2}-8\,x-1=0$. 
Six points in $Y^{ab}(K)$ (of order 21).
\end{enumerate}
Notice that the $4$ points in $Y^{nab}(K)$ correspond to two non conjugate 
representations in $\mathrm{PSL}_2(\CC)$ ($x$ is already a variable 
of the characters in $\mathrm{PSL}_2(\CC)$ but $y$ is not).

This knot is $\pi$-spherical, so the lift of its holonomy in 
$\mathrm{SU}(2)\times \mathrm{SU}(2)$ projects to the tetrahedral group in one 
$\mathrm{SU}(2)$ factor (for the basis $2$-orbifold), and to the binary 
dihedral group of order $28$ in the other $\mathrm{SU}(2)$ factor. This 
dihedral group in one of the factors corresponds to an action of $\S^1$ 
that is preserved up to orientation, yielding the orbifold Seifert fibration.

Regarding the ramifications, we obviously have the collapses of $Y^{ab}(K)$ 
mod 3 and  mod 7:
\begin{itemize}
\item When reducing mod 3, the point of order 3 becomes trivial 
(in $Y^{triv}(K)$). The 6 points of order 21 are identified to some of the 3 
points of order 7.
\item When reducing mod 7, the points of order 7 become also trivial (in 
$Y^{triv}(K)$). The 6 points of order 21 become the point of order 3.
\end{itemize}

\section{A $\pi$-hyperbolic knot}
\label{sec:pihyperbolic}

The knot $8_{18}$ is $\pi$-hyperbolic, that is its double branched cover is 
hyperbolic. Here the canonical component splits into two irreducible components
in characteristic $3$.

 \begin{figure}[h]
 \begin{center}
  \begin{tikzpicture}[scale=.6,line cap=round]
\begin{scope}
  \begin{scope}
 \draw [->] (0.1,1.1) -- (0.1,1.4); 
 \draw (0.1, 0.7)--(0.1,0.9); 
  \draw [->] (0.1,2.1) -- (0.1,2.4); 
  \draw (0.1, 1.7)--(0.1,1.9);
  \draw [->] (0.1,3.1) -- (0.1,3.4); 
  \draw (0.1, 2.7)--(0.1,2.9);-.77);
  \node at (-0.35,1.3) {$\gamma$};
  \node at (-0.35,2.3) {$\beta$};
  \node at (-0.35,3.3) {$\alpha$};
   \braid[line width=1pt,  line cap=round , color=black, rotate=90] s_2^{-1} ;
 \end{scope}
   \begin{scope}[shift={(5.04,1.91)}]
   \braid[line width=1pt,  color=black, rotate=225] s_2;
 \end{scope}
   \draw [thick, color=black] (1.5,1) arc [radius=.5, start angle=90, end angle= 45];
 \draw [thick, color=black] (1.5,2) arc [radius=1.5, start angle=90, end angle= 45]; 
 \draw [thick, color=black] (1.5,3) arc [radius=2.5, start angle=90, end angle= 45]; 
 \draw [thick, color=black] (2.91,-0.2) arc [radius=.5, start angle=45, end angle= 0];
   \draw [thick, color=black] (3.61,.51) arc [radius=1.5, start angle=45, end angle= 0]; 
   \draw [thick, color=black] (4.33,1.21) arc [radius=2.5, start angle=45, end angle= 0];
\end{scope} 
\begin{scope}[shift={(2.05,-.55)}, rotate=270]
  \begin{scope}
   \braid[line width=1pt,  color=black, rotate=90] s_2^{-1} ;
 \end{scope}
   \begin{scope}[shift={(5.04,1.91)}]
   \braid[line width=1pt,  color=black, rotate=225] s_2;
 \end{scope}
   \draw [thick, color=black] (1.5,1) arc [radius=.5, start angle=90, end angle= 45];
 \draw [thick, color=black] (1.5,2) arc [radius=1.5, start angle=90, end angle= 45]; 
 \draw [thick, color=black] (1.5,3) arc [radius=2.5, start angle=90, end angle= 45]; 
 \draw [thick, color=black] (2.91,-0.2) arc [radius=.5, start angle=45, end angle= 0];
   \draw [thick, color=black] (3.61,.51) arc [radius=1.5, start angle=45, end angle= 0]; 
   \draw [thick, color=black] (4.33,1.21) arc [radius=2.5, start angle=45, end angle= 0];
\end{scope} 
\begin{scope}[shift={(1.5,-2.61)}, rotate=180]
  \begin{scope}
   \braid[line width=1pt,  color=black, rotate=90] s_2^{-1} ;
 \end{scope}
   \begin{scope}[shift={(5.04,1.91)}]
   \braid[line width=1pt,  color=black, rotate=225] s_2;
 \end{scope}
   \draw [thick, color=black] (1.5,1) arc [radius=.5, start angle=90, end angle= 45];
 \draw [thick, color=black] (1.5,2) arc [radius=1.5, start angle=90, end angle= 45]; 
 \draw [thick, color=black] (1.5,3) arc [radius=2.5, start angle=90, end angle= 45]; 
 \draw [thick, color=black] (2.91,-0.2) arc [radius=.5, start angle=45, end angle= 0];
   \draw [thick, color=black] (3.61,.51) arc [radius=1.5, start angle=45, end angle= 0]; 
   \draw [thick, color=black] (4.33,1.21) arc [radius=2.5, start angle=45, end angle= 0];
\end{scope} 
\begin{scope}[shift={(-.56,-2.06)}, rotate=90]
  \begin{scope}
   \braid[line width=1pt,  color=black, rotate=90] s_2^{-1} ;
 \end{scope}
   \begin{scope}[shift={(5.04,1.91)}]
   \braid[line width=1pt,  color=black, rotate=225] s_2;
 \end{scope}
   \draw [thick, color=black] (1.5,1) arc [radius=.5, start angle=90, end angle= 45];
 \draw [thick, color=black] (1.5,2) arc [radius=1.5, start angle=90, end angle= 45]; 
 \draw [thick, color=black] (1.5,3) arc [radius=2.5, start angle=90, end angle= 45]; 
 \draw [thick, color=black] (2.91,-0.2) arc [radius=.5, start angle=45, end angle= 0];
   \draw [thick, color=black] (3.61,.51) arc [radius=1.5, start angle=45, end angle= 0]; 
   \draw [thick, color=black] (4.33,1.21) arc [radius=2.5, start angle=45, end angle= 0];
\end{scope}
\draw [color=blue] (.75,4)--(.75,-6);
\node at (2, -6.35) {$\sigma_1 $};
 \draw [thin, color=blue, ->] (.25,-6) arc [radius=.5, start angle=190, end angle= 350];  
 \draw [thin, color=black, ->] (1.25,-1.5) arc [radius=.5, start angle=0, end angle= 90];  
\node at (1.50, -1.0) {$\phi $}; 
\fill[red]  (-1,1.6) circle (.1); 
\fill[red]  (2,-4.5) circle (.1);
%
%
%

; 
 
 \end{tikzpicture}
 \end{center}
  \caption{The knot $8_{18}$. 
 The period $\phi$, the axis of the 
strong inversion $\sigma_1$ and the fixed points of the
orientation reversing involution $\sigma_2$ marked as thick red dots} 
\label{fig:8_18}
\end{figure}
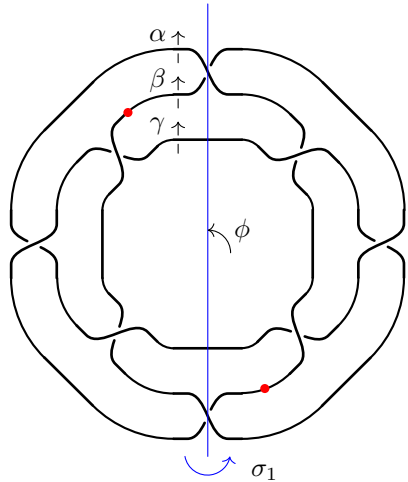

The knot $8_{18}$ has a period $\phi$, corresponding to the rotation of order 
four in Figure~\ref{fig:8_18}. The symmetry group of the knot is generated by
the rotation together with two involutions $\sigma_1$ and $\sigma_2$. The 
{
orientation preserving involution $\sigma_1$ is a rotation of angle $\pi$ about 
an axis represented as a blue line in Figure~\ref{fig:8_18}: it is a strong
inversion of the knot. The orientation-reversing involution $\sigma_2$ is 
reflection with two fixed points belonging to the knot (after a suitable 
conformal change of coordinates of $\S^3=\mathbb{R}^3$ sending one of the two 
fixed points to $\infty$ and the other to the origin, $\sigma_2$ can be seen as 
the linear map with matrix minus the identity).
}

Here $\vert\Delta_K(-1)\vert=45$, and $H_1(M_2)=\ZZ/5\oplus\ZZ/3\oplus\ZZ/3$.

We sketch the computation of $X(8_{18})$ in Appendix~\ref{appendix:8_18}, here 
we just describe it. We use the following coordinates for $X(8_{18})$:
$$
\begin{array}{l}
 t  = tr_{\alpha }= tr_{\beta}= tr_{\gamma}\\
 x  = tr_{\alpha\beta^{-1}} \\
 y = tr_{\beta^{-1}\gamma} \\
 z  = tr_{\alpha\gamma} \\
 w  = tr_{\alpha\beta^{-1}\gamma}
 \end{array}
$$
We next list the 11 components of $X(8_{18})$.

\paragraph{Abelian component}

\begin{equation}
 \label{eqn:abelian}
 \langle {-t+w,x-2,y-2,-{t}^{2}+z+2} \rangle 
\end{equation}
The symmetries act trivially here. When $t=0$ we get the character that is 
trivial on $M_2$, the double branched covering.

\paragraph{The canonical component}
 \begin{equation}
\label{eqn:canonical}
\langle {tw+{y}^{2}+ \left( -{t}^{2}-1 \right) y+1,-{t}^{2}+y+z,5\,{t}
^{2}-4\,tw+{w}^{2}-6,-{t}^{2}+x+y-1}\rangle 
 \end{equation}
 It can also be written as 
 $$
 \begin{array}{c}
  tw+{y}^{2}+ \left( -{t}^{2}-1 \right) y+1=0\\
  z= t^2-y\\
  x=z+1\\
  5\,{t}
^{2}-4\,tw+{w}^{2}=6
 \end{array}
 $$
It is a double branched covering of the curve described in \cite{HLMlinks} obtained 
by considering only the variables $t$ and $y$ (we can get rid of $x$ and $z$,
the projection is the elimination of $w$). 

This component is fixed point-wise by the period $\phi$ and by $\sigma_1$. The 
involution $\sigma_2$ preserves the component but permutes the holonomy of the 
complete structure with its complex conjugate. 

The intersection with $t=0$ yields precisely the 4 lifts of the hyperbolic 
holonomy of the orbifold ($t=0$, $y=(1\pm\sqrt{-3})/2$, $w=\pm\sqrt{6}$). The 4 
points correspond to complex conjugation ($y=(1\pm\sqrt{-3})/2$) and a choice 
of sign ($w=\pm\sqrt{6}$) as the abelianisation of the orbifold group is 
$\mathbb Z/ 2$ (it only concerns the variable $w$ because $t=0$). When reducing 
mod 3, these 4 points become a single point.

It should be noticed that the last equation
$$
  5\,{t}
^{2}-4\,tw+{w}^{2}=6
$$
factors mod 3  as the product of two lineal equations, hence this component 
splits into 2 components when reducing mod 3.

\paragraph{Four ``trefoil-like'' components}
There are four components that look like the the variety of characters of the 
trefoil:
 \begin{eqnarray}
  \langle {-1+z,-t+w,x-2,-{t}^{2}+y+1}\rangle  \label{eqn:trefoil1}\\
  \langle {-1+z,-t+w,y-2,-{t}^{2}+x+1}\rangle \label{eqn:trefoil2} \\
  \langle {-1+z,-{t}^{3}+2\,t+w,-{t}^{2}+x+1,-{t}^{2}+y+1}\rangle \label{eqn:trefoil3} \\
  \langle {-{t}^{3}+2\,t+w,-{t}^{2}+x+1,-{t}^{2}+y+1,-{t}^{2}+z+2} \rangle  \label{eqn:trefoil4}
\end{eqnarray}
They come from a surjection from $\pi_1(\mathbf{S}^3\setminus 8_{18})$ to the 
group of the trefoil. Namely, if we add the relation $\alpha=\beta$ then we get 
the group of the trefoil, and this corresponds to the 
component~\eqref{eqn:trefoil1}.

For computational purposes, it is useful to give a parameterisation of these 
components:
$$
\begin{array}{lllll}
 x= 2, & y= t^2-1, & z=1, & w= t, &\eqref{eqn:trefoil1} \\
 x=t^2-1, & y=2, & z=1, & w= t, &\eqref{eqn:trefoil2} \\
 x=t^2-1, & y= t^2-1, & z=1, & w= t^3-2t, &\eqref{eqn:trefoil3} \\
 x=t^2-1, & y= t^2-1, & z=t^2-2, & w= t^3-2t, &\eqref{eqn:trefoil4}
\end{array}
$$

The period (the symmetry of order 4) permutes \eqref{eqn:trefoil1} with  
\eqref{eqn:trefoil3}, and \eqref{eqn:trefoil2} with \eqref{eqn:trefoil4}. 
The square of the period fixes point-wise these components.

The involution $\sigma_1$ preserves \eqref{eqn:trefoil1} and  
\eqref{eqn:trefoil3} and permutes \eqref{eqn:trefoil2} with 
\eqref{eqn:trefoil4}.

The involution $\sigma_2$ permutes \eqref{eqn:trefoil1} and 
\eqref{eqn:trefoil2} and preserves the components \eqref{eqn:trefoil3} and 
\eqref{eqn:trefoil4}. 
Thus the group of symmetries acts transitively on these components.

The intersection of the 4 components is the point $x=y=2$, $z=1$, 
$w=t=\pm\sqrt 3$, which also lies in the abelian component (the discriminant of 
the Alexander polynomial vanishes).

When $t=0$, we get the conjugacy classes of 4 representations of order $3$ of 
$M_2$, as $H_1(M_ 2)\cong \mathbb{Z}/3\oplus \mathbb{Z}/3\oplus \mathbb{Z}/5$.

\paragraph{The figure eight knot component}
This component looks like the character variety of the figure eight knot and 
comes from a surjection of fundamental groups. The figure eight knot is in fact 
the quotient of the $8_{18}$ by $\phi^2$, the square of the period. 
The ideal is:
\begin{equation}
\label{eqn:fig8}
 \langle {-t+w,{y}^{2}+ \left( -{t}^{2}+1 \right) y+{t}^{2}-1,-{t}^{2}+
y+z,-{t}^{2}+x+y+1}\rangle
\end{equation}
It can also be presented as
$$
\begin{array}{c}
 w=t\\
 (y-1) t^2= y^2+y-1\\
 z=t^2-y\\
 x\, y = x+ y
\end{array}
$$
It is point-wise invariant by the period  and by the involution $\sigma_1$ and 
preserved by $\sigma_2$, but $\sigma_2$ permutes $x$ and $y$. In particular 
$\sigma_2$ swaps the holonomy of the figure eight knot with its complex 
conjugate.

When $t=0$, $y^2+y-1=0$ corresponds to the two conjugacy classes of nontrivial 
representations of $\mathbb{Z}/5$.

\paragraph{Four more components}

When $t=0$, there are still 16 conjugacy classes of representations of $M_2$ of 
order 15. They belong to 4 components. The first one is:
\begin{multline}
 \label{eqn:exotic1}
\langle {x}^{2}+ \left( -{t}^{2}+1 \right) x+{t}^{2}-1,
\\
\left( {t}^{2
}-1 \right) x+y-z-{t}^{4}+3\,{t}^{2}-1, 
\\
\left( -{t}^{2}+1 \right) xz+
 \left( {t}^{2}+1 \right) x+{z}^{2}+ \left( {t}^{4}-4\,{t}^{2}+1
 \right) z-{t}^{4}+3\,{t}^{2}-1,
 \\
 -{t}^{3}+xt-zt+3\,t+w\rangle 
\end{multline}
The second one is:
\begin{multline}\label{eqn:exotic2}
\langle {y}^{2}+ \left( -{t}^{2}+1 \right) y+{t}^{2}-1, 
\\
w-xt+ \left( -
{t}^{3}+2\,t \right) y+{t}^{5}-4\,{t}^{3}+4\,t,
\\
{x}^{2}+ \left( {t}^{2}
-1 \right) xy+ \left( -{t}^{4}+2\,{t}^{2}-1 \right) x+ \left( -{t}^{4}
+2\,{t}^{2}+1 \right) y+{t}^{6}-4\,{t}^{4}+4\,{t}^{2}-1,
\\
xy+ \left( -{t
}^{2}+1 \right) x+ \left( -{t}^{4}+3\,{t}^{2}-1 \right) y+z+{t}^{6}-5
\,{t}^{4}+6\,{t}^{2}-1 \rangle
%
%
%
\end{multline}
The third one
\begin{multline}\label{eqn:exotic3}
 \langle {z}^{2}+ \left( -{t}^{2}-1 \right) z+2\,{t}^{2}-1,
 \\
 w+xzt-2\,xt
+ \left( -{t}^{5}+4\,{t}^{3}-4\,t \right) z+2\,{t}^{5}-7\,{t}^{3}+5\,t
,
\\
{x}^{2}+ \left( -{t}^{2}+1 \right) xz+ \left( {t}^{2}-1 \right) x+
 \left( {t}^{4}-2\,{t}^{2}-1 \right) z-2\,{t}^{4}+5\,{t}^{2}-1,
 \\
 xz-x+y+
 \left( -{t}^{4}+3\,{t}^{2}-1 \right) z+2\,{t}^{4}-6\,{t}^{2}+1
\rangle 
\end{multline}
The last component is:
\begin{multline}\label{eqn:exotic4}
  \langle
  {t}^{2}-y-z,
  \\
  -z{x}^{2}+x{y}^{2}+xyz+x{z}^{2}+{x}^{2}-4\,xy-xz-2\,{y}^{
2}-yz-{z}^{2}+5\,y+2\,z-1,
\\
{x}^{2}yz-{x}^{2}y-z{x}^{2}-4\,xyz-x{z}^{2}+
4\,xy+6\,xz+3\,yz+{z}^{2}-x-2\,y-5\,z,
  \\
  -{x}^{2}{z}^{2}+x{z}^{3}+2\,z{x}
^{2}+y{z}^{2}-{z}^{3}-{x}^{2}+xy-4\,xz-2\,yz+x-2\,y+4\,z+1,
\\
{z}^{2}yx-4
\,xyz-2\,x{z}^{2}+{y}^{2}z-y{z}^{2}+3\,xy+6\,xz-2\,{y}^{2}+2\,yz+2\,{z
}^{2}-2\,x+2\,y-5\,z-1  ,
\\
-{x}^{2}{z}^{2}+{z}^{2}{y}^{2}+2\,{z}^{3}y+{z}^
{4}+2\,z{x}^{2}-2\,xyz+x{z}^{2}-{y}^{2}z-7\,y{z}^{2}
-5\,{z}^{3}-{x}^{2
}
+3\,xy+xz
\\
\qquad\qquad\qquad\qquad\qquad
-2\,{y}^{2}+5\,yz+6\,{z}^{2}-2\,x+4\,y-z-1,
\\
2\,{x}^{2}{z}^{2}
+z{y}^{3}-3\,{z}^{3}y-2\,{z}^{4}-4\,z{x}^{2}+3\,xyz-3\,x{z}^{2}-2\,{y}
^{3}-8\,{y}^{2}z+6\,y{z}^{2}+9\,{z}^{3}+2\,{x}^{2}-5\,xy
\\
\qquad\qquad\qquad\qquad\qquad
+16\,{y}^{2}+
10\,yz-5\,{z}^{2}+5\,x-22\,y-10\,z+4,
\\
x{z}^{2}t-t{y}^{2}z-2\,{z}^{2}yt-
{z}^{3}t-2\,xzt+2\,t{y}^{2}+10\,zyt+5\,{z}^{2}t-12\,ty-10\,zt+7\,t+w
\rangle
\end{multline}
Each component intersects $t=0$ in 4 points, corresponding to representations 
of $M_2$ or order 15. The action of the symmetry group on the components is:
$$
\begin{array}{c}
 \sigma_1 \\
 \begin{array}{rcl}
  \eqref{eqn:exotic1}& \mapsto & \eqref{eqn:exotic1} \\
  \eqref{eqn:exotic2}& \mapsto & \eqref{eqn:exotic3} \\
  \eqref{eqn:exotic3}& \mapsto & \eqref{eqn:exotic2} \\
  \eqref{eqn:exotic4}& \mapsto & \eqref{eqn:exotic4} \\
 \end{array}
\end{array}
\qquad
\begin{array}{c}
 \sigma_2 \\
 \begin{array}{rcl}
  \eqref{eqn:exotic1}& \mapsto & \eqref{eqn:exotic2} \\
  \eqref{eqn:exotic2}& \mapsto & \eqref{eqn:exotic1} \\
  \eqref{eqn:exotic3}& \mapsto & \eqref{eqn:exotic4} \\
  \eqref{eqn:exotic4}& \mapsto & \eqref{eqn:exotic3} \\
 \end{array}
\end{array}
\qquad
\begin{array}{c}
 \phi \\
 \begin{array}{rcl}
  \eqref{eqn:exotic1}& \mapsto & \eqref{eqn:exotic4} \\
  \eqref{eqn:exotic2}& \mapsto & \eqref{eqn:exotic3} \\
  \eqref{eqn:exotic3}& \mapsto & \eqref{eqn:exotic2} \\
  \eqref{eqn:exotic4}& \mapsto & \eqref{eqn:exotic1} \\
 \end{array}
\end{array}
$$
So the four components are equivalent.

\medskip

\paragraph{Characters of $Y(8_{18})=Y^{triv} (8_{18})\cup Y^{ab} 
(8_{18})\cup Y^{nab} (8_{18})$}
\begin{itemize}
\item $Y^{triv} (8_{18})$ has a single point, with coordinates $t=w=0$, $x=y=2$,
and $z=-2$.

\item $Y^{ab} (8_{18})$ has $\frac{\vert\Delta(-1)\vert -1}{2}=22$ points. When 
looking at the order of the induced representation of 
$\pi_1(M_2)$ in $\operatorname{PSL}_2(\mathbb{C})$, they are distributed as 
follows: the 2 points of order 5 lie in the figure eight knot component, the 4 
points of order 3 lie in trefoil components, one per component, and the 16 
points of order 15 on the exotic components (4 per component). In 
characteristic 3, $Y^{ab} (8_{18})$ has $(5-1)/2=2$ points, in characteristic 
5, $(9-1)/2=4$ points.

\item $Y^{nab} (8_{18})$ has $4$ points, all of them in the canonical component:
two lifts of the orbifold holonomy and their complex conjugate.

\end{itemize}

When reducing mod 3:  The 4 points of $Y^{ab} (8_{18})$ in the trefoil 
components collapse to $Y^{triv} (8_{18})$. The 2 points in the figure eight 
knot component remain as different points. On each exotic component, the 4 
points become (the same) 2 points, that are also the 2 points for the figure 
eight component. The four points in $Y^{nab} (8_{18})$ become a single one, but 
different from the previous ones.

When reducing mod 5: The 4 points of $Y^{ab} (8_{18})$ in the trefoil 
components remain as 4 different points. The four points of $Y^{ab} (8_{18})$ 
in each exotic component become a single one, in fact the same as a point in 
one of the trefoil components, and points in different exotic components go to 
different trefoil components. The 2 points in the figure eight component 
collapse to $Y^{triv} (8_{18})$. The 4 points in $Y^{nab} (8_{18})$ remain as 
different points.


\section{Torus Knots}
\label{sec:torus}

For $m\le n\in \mathbb N$ coprime, the $(m,n)$-torus knot is denoted by
$T(m,n)$. Since
$$\pi_1(\mathbf{S}^3\setminus T(m,n))\cong \langle a,b\mid a^m=b^n\rangle,$$
irreducible representations map the central element $a^m=b^n$ to the centre
$\{\pm\operatorname{Id} \}$ (and only to $-\operatorname{Id}$ for $m=2$).
Thus, besides the abelian one, the components of the variety of characters are
determined by the conjugacy classes of elements in $\SLL_2$ to which $a$ and
$b$ are mapped. Therefore, when $p$ divides either $m$ or $n$, the reduction
mod $p$ ramifies, so that some components get identified. To describe precisely
this ramification, we introduce the following polynomials:

\begin{Definition}
\label{Def:PsiPhi}
For $k\in\mathbb Z$, $k\geq 1$, we define $\Phi_k,\Psi_k\in\ZZ[x]$  to be the
polynomials determined by the condition:
$$
\Phi_k(\lambda+\frac{1}{\lambda})=\begin{cases}
    \dfrac{\lambda^k-1}{\lambda-1} \lambda^{(-k+1)/2} & k\textrm{ odd}  \\[7pt]
    \dfrac{\lambda^k-1}{\lambda^2-1} \lambda^{-k/2+1} & k\textrm{ even} 
                                \end{cases}
$$
and
$$
\Psi_k(\lambda+\frac{1}{\lambda})=\begin{cases}
    \dfrac{\lambda^k+1}{\lambda+1} \lambda^{(-k+1)/2} & k\textrm{ odd}  \\[7pt]
    ({\lambda^k+1}) \lambda^{-k/2} & k\textrm{ even}  
                                \end{cases}
$$
for every $\lambda\in\CC^*$.
\end{Definition}

\begin{Remark}
These polynomials satisfy the following property: for
$A\in\operatorname{SL}_2(\mathbb{K})$ with $\mathbb{K}$ an algebraically closed
field of characteristic 0 or coprime with $k$:
\begin{itemize}
\item $\Phi_k(\operatorname{tr}(A))=0$ if and only if
$A^k=\operatorname{Id}$ and $A\neq \pm \operatorname{Id}$;
\item $\Psi_k(\operatorname{tr}(A))=0$ if and only if
$A^k=-\operatorname{Id}$ and $A\neq - \operatorname{Id}$.
\end{itemize}
\end{Remark}

This remark follows easily from viewing $\lambda$ in
Definition~\ref{Def:PsiPhi} as an eigenvalue of a matrix with determinant $1$.
The following formulae are going to be useful later:
\begin{eqnarray}
\label{eqn:psi2ktraceAk}
\Psi_{2k}(\operatorname{tr}(A)) &=& 
\operatorname{tr}(A^k)\qquad\forall A\in\SLL_2
\\
\Psi_{2k}(x)- 2& =& \begin{cases} (x-2) \Phi_k(x)^2   & k \textrm{ odd}\\
                     (x^2-4)  \Phi_k(x)^2   & k \textrm{ even}
                    \end{cases}
   \label{eqn:psi2kminus2} \\
\Psi_{2k}(x)+ 2& = &    \begin{cases} (x+2) \Psi_k(x)^2  & k \textrm{ odd}\\
                          \Psi_k(x)^2   & k \textrm{ even}
                    \end{cases} 
                    \label{eqn:psi2kplus2}
\end{eqnarray}
The proofs of these formulae and other properties of $\Phi_k$ and $\Psi_k$ are
provided in Appendix~\ref{section:PhiPsi}.

\medskip

We describe now $X(T(m,n))$. Using coordinates
$$
\begin{array}{l}
x=\operatorname{tr}_a \\
y=\operatorname{tr}_b \\
z=\operatorname{tr}_{ab^{-1} }
\end{array}
$$
we have
$$
X(T(2,n))= X^{ab}(T(2,n))\cup \{(x,y,z)\in\CC^3\mid x=\Psi_n(y)=0\},
$$
and, for $m>2$,
\begin{multline*}
X(T(m,n))= X^{ab}(T(m,n))\cup \{(x,y,z)\in\CC^3\mid \Phi_m(x)=\Phi_n(y)=0\}
 \\
 \cup \{(x,y,z)\in\CC^3\mid \Psi_m(x)=\Psi_n(y)=0\}
\end{multline*}
In both cases there are $(m-1)(n-1)/2+1$ components, including $X^{ab}(T(m,n))$.

The components other than  $X^{ab}(T(m,n))$ are lines in the coordinates
$(x,y,z)$, as they are defined by $x=x_0$ and $y=y_0$,
for some values $x_0,y_0\in\CC$.

To understand ramifications we use the following lemma, which is
straightforward keeping in mind that the map sending each element of a field of
characteristic $p$ to its $p$th power is a morphisms of the field:
\begin{Lemma}
\label{lemma:phipsi}
For a prime $p>2$, if $p\mid k$ and $k=p^{r}k'$ with $r$ maximal we have:
$$
\Phi_k(u)\equiv\begin{cases}
 \Phi_{k'}(u)^{p^r} (u-2)^{(p^r-1)/2}  & \textrm{for } k\textrm{ odd,}  \\[7pt]
 \Phi_{k'}(u)^{p^r} (u^2-4)^{(p^r-1)/2}   &\textrm{for } k\textrm{ even,} 
                                \end{cases} \mod p,
$$
and
$$
\Psi_k(u)\equiv\begin{cases}
  \Psi_{k'}(u)^{p^r} (u+2)^{(p^r-1)/2}  & \textrm{for }k\textrm{ odd.}  \\[7pt]
  \Psi_{k'}(u)^{p^r}    & \textrm{for }k\textrm{ even,} 
                                \end{cases} \mod p.
$$
\end{Lemma}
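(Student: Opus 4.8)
The plan is to reduce all four congruences (the two parities, for $\Phi$ and for $\Psi$) to a single Laurent-polynomial identity over $\FF_p$ obtained by substituting $u=\lambda+\lambda^{-1}$, and then to read them off from the Frobenius endomorphism. Write $q=p^r$, so $k=qk'$; since $p$ is odd, $k$ and $k'$ have the same parity and $q$ is odd, hence each of the exponents $(1-k')/2$, $(2-k')/2$, $-k'/2$ and $(q-1)/2$ below is an integer. I would work in the ring $R=\FF_p[\lambda,\lambda^{-1}]$ and use the substitution homomorphism $\iota\colon\FF_p[u]\to R$, $u\mapsto\lambda+\lambda^{-1}$. This map is injective, because for $f$ of degree $n$ the top power of $\lambda$ in $\iota(f)$ is $\lambda^{n}$ with the leading coefficient of $f$, so $\iota(f)=0$ forces $f=0$. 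Consequently it suffices to verify each congruence after applying $\iota$. Moreover, since $\Phi_k,\Psi_k\in\ZZ[x]$ satisfy their defining relations as identities in $\ZZ[\lambda,\lambda^{-1}]$ (e.g.\ $\Phi_k(\lambda+\lambda^{-1})(\lambda-1)\lambda^{(k-1)/2}=\lambda^k-1$ for odd $k$), these relations remain valid for the reductions mod $p$; this is what lets me compute $\iota(\Phi_k)$, $\iota(\Phi_{k'})$ and their analogues explicitly.

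The engine is the Frobenius map. In characteristic $p$ the $q$-th power map $f\mapsto f^{q}$ is a ring endomorphism of $R$, so that $(\lambda^{a}-1)^{q}=\lambda^{aq}-1$ and $(\lambda^{a}+1)^{q}=\lambda^{aq}+1$ for every $a$. I would also record the elementary identities $u-2=(\lambda-1)^2/\lambda$, $u+2=(\lambda+1)^2/\lambda$ and $u^2-4=(\lambda^2-1)^2/\lambda^2$, which convert the correction factors $(u-2)^{(q-1)/2}$, $(u+2)^{(q-1)/2}$ and $(u^2-4)^{(q-1)/2}$ into powers of $\lambda\mp1$ (resp.\ of $\lambda^2-1$).

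Each case is then a one-line verification. For odd $k$, applying $\iota$ to the proposed right-hand side and invoking Frobenius gives
$$
\iota\bigl(\Phi_{k'}(u)^{q}(u-2)^{(q-1)/2}\bigr)
=\frac{\lambda^{k}-1}{(\lambda-1)^{q}}\,\lambda^{q(1-k')/2}\cdot\frac{(\lambda-1)^{q-1}}{\lambda^{(q-1)/2}}
=\frac{\lambda^{k}-1}{\lambda-1}\,\lambda^{(1-k)/2}=\iota(\Phi_k(u)),
$$
using $(\lambda-1)^{q}=\lambda^{q}-1$ to cancel one factor and the simplification $q(1-k')/2-(q-1)/2=(1-k)/2$. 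The even-$\Phi$ case is the same computation with $\lambda^2-1$ in place of $\lambda-1$; the odd-$\Psi$ case replaces $\lambda-1$ by $\lambda+1$ and $u-2$ by $u+2$; and the even-$\Psi$ case is the shortest, since $\iota(\Psi_{k'}(u)^{q})=(\lambda^{k}+1)\lambda^{-k/2}=\iota(\Psi_k(u))$ with no correction factor, which is exactly why no $(u\pm2)$ term appears there.

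I do not expect a genuine obstacle: the whole content is the Frobenius identity $(\lambda^{k'}\mp1)^{q}=\lambda^{k}\mp1$. The only two points deserving care are the injectivity of $\iota$ (so that an identity in $R$ delivers the stated congruence of polynomials in $u$) and the bookkeeping of the powers of $\lambda$, which must be tracked precisely so that the leftover monomial matches the normalising factor $\lambda^{(1-k)/2}$, $\lambda^{(2-k)/2}$ or $\lambda^{-k/2}$ built into the definitions of $\Phi_k$ and $\Psi_k$.
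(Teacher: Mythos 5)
Your proof is correct and follows essentially the same route as the paper's: both substitute $u=\lambda+\lambda^{-1}$ and let the Frobenius identity $(\lambda^{a}\mp 1)^{p^r}\equiv\lambda^{ap^r}\mp 1$ do all the work, with the correction factors $(u\mp 2)^{(p^r-1)/2}$ absorbing the leftover powers of the denominator. The only cosmetic difference is that you work with integer powers of $\lambda$ and make the injectivity of the substitution explicit, whereas the paper uses the symmetrized half-integer form $\lambda^{\pm k/2}$.
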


In the statement of the lemma, notice that $\Phi_1=\Psi_1=1$.

Notice that in characteristic $p$ an element has trace $\pm 2$ precisely when
its $p$-th power is equal to $\pm \operatorname{Id}$.
In our situation, when $x$ or $y $ is $\pm 2\mod p$, the representation can
still be irreducible. Thus:

\begin{Corollary}
Let $\FF$ be an algebraically closed field of characteristic $p>2$.
If $p\mid n$,  then the $(m-1)(n-1)/2$ components of $X(T(m,n))$ containing
irreducible characters collapse to $(m-1)(n'+1)/2$ components in
$X(T(m,n))_\FF$,
all of them containing irreducible characters, where $n'$ is coprime with $p$ 
satisfying  $n=p^r n'$.
\end{Corollary}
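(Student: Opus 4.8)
The plan is to count how many distinct components survive after reduction mod $p$, using the explicit description of $X(T(m,n))$ as a union of lines indexed by the roots of $\Phi_m,\Phi_n$ (or $\Psi_m,\Psi_n$) together with the factorization of $\Phi_n$ and $\Psi_n$ mod $p$ provided by Lemma~\ref{lemma:phipsi}. Since each non-abelian component is cut out by conditions $x=x_0$, $y=y_0$ coming from roots of these polynomials, two components collapse precisely when the corresponding roots coincide mod $p$. So the problem reduces to counting distinct roots (as points of $\Fp$) of the reductions of $\Phi_n$ and $\Psi_n$.

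First I would treat the case $m>2$, where $X(T(m,n))$ has two families of non-abelian components: those with $\Phi_m(x)=\Phi_n(y)=0$ and those with $\Psi_m(x)=\Psi_n(y)=0$. Since $p\nmid m$ by the coprimality of $m,n$ (and $p\mid n$), the factor in $x$ is unaffected and contributes $(m-1)/2$ distinct values in each family. The entire collapse happens in the $y$-coordinate. Applying Lemma~\ref{lemma:phipsi} to $\Phi_n$ and $\Psi_n$, I would read off the distinct roots of each: writing $n=p^r n'$, the reduction of $\Phi_n$ has the roots of $\Phi_{n'}$ together with the root $y=2$ (odd case) or $y=\pm2$ (even case) coming from the extra factor, and similarly for $\Psi_n$ with the root $y=-2$ (or no extra root in the even case). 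The key point is that the roots of $\Phi_{n'}$ (resp. $\Psi_{n'}$) stay simple and distinct mod $p$ since $p\nmid n'$, and the new roots $\pm2$ are \emph{genuinely new} values not already among the $\Phi_{n'}$/$\Psi_{n'}$ roots, because $\pm2$ correspond to parabolic/central traces rather than to the diagonalizable matrices of exact order dividing $n'$.

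The main obstacle, and the place requiring care, is the bookkeeping of how the two families $\Phi$ and $\Psi$ interact across the $\pm2$ values and how this depends on the parity of $n'$ and of $n$. The cleanest route is to combine the two families: the total set of admissible $y$-values is the union of the roots of $\Phi_n$ and of $\Psi_n$, and over $\CC$ these are disjoint, giving $(n-1)/2$ values of $y$ split as roots of $\Phi_n$ and roots of $\Psi_n$. After reduction, the roots of $\Phi_{n'}$ and $\Psi_{n'}$ remain disjoint (an element cannot have $A^{n'}=\mathrm{Id}$ and $A^{n'}=-\mathrm{Id}$ simultaneously when $\mathrm{char}\,\FF\neq2$), and one must verify that the newly appearing traces $2$ and $-2$ are distinct from each other and are not absorbed into the $\Phi_{n'}$/$\Psi_{n'}$ roots. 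A careful count then yields $(n'-1)/2+1=(n'+1)/2$ distinct $y$-values in the odd case, and one checks the even cases give the same total; multiplying by the $(m-1)/2$ unaffected $x$-values in each of the two families and summing gives $(m-1)(n'+1)/2$ surviving components.

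For the case $m=2$ the argument is the same but simpler: there is a single family $x=0$, $\Psi_n(y)=0$, so only $\Psi_n$ reduces mod $p$. The count of distinct $y$-roots of $\Psi_n$ mod $p$ via Lemma~\ref{lemma:phipsi} again produces $(n'+1)/2$ values, matching the formula $(m-1)(n'+1)/2$ with $m=2$. Finally, I would note that every surviving component still contains irreducible characters: a trace equal to $\pm2$ does not force reducibility in characteristic $p$, since the matrices $a,b$ need not be central, as already emphasized in the remark preceding the corollary. This observation is what prevents any of the collapsed components from merging into the abelian component, and so completes the count.
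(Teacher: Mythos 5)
Your proposal is correct and is essentially the paper's own (implicit) argument: the corollary is left there as a direct consequence of Lemma~\ref{lemma:phipsi}, the description of the non-abelian part of $X(T(m,n))$ as a union of lines $\{x=x_0,\ y=y_0\}$ indexed by root pairs of $\Phi_m,\Phi_n$ and $\Psi_m,\Psi_n$, and the remark that a trace equal to $\pm 2$ in characteristic $p$ need not force reducibility (so that $z$ remains free on each surviving line) --- and you supply precisely the missing details: simplicity and disjointness of the roots of $\Phi_{n'}$ and $\Psi_{n'}$ mod $p$, the fact that the new roots $\pm2$ are not among them, and the separate treatment of $m=2$. One small imprecision: when $m$ is even the two families contribute $m/2-1$ and $m/2$ values of $x$ rather than $(m-1)/2$ each, but since $n$ is then odd and each family has the same number $(n'+1)/2$ of surviving $y$-values, the summed total $(m-1)(n'+1)/2$ is unaffected.
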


More specifically, among the $(m-1)(n-1)/2$ components present in
characteristic zero, $(m-1)(n-p^r)/2$ collapse to $(m-1)(n'-1)/2$ components
in groups of $p^r$, and $(m-1)(p^r-1)/2$ collapse to $(m-1)/2$ components
(that contain irreducible characters, as $z$ may take arbitrary values).

The components with irreducible characters are pairwise disjoint, but we may
ask how they intersect $X^{ab}(T(m,n))$.

\begin{Lemma}
In characteristic 0 or $p$ coprime with $m$ and $n$, $X^{ab}(T(m,n))$
intersects each other irreducible component of $X(T(m,n))$ transversely in two
distinct points.

When reducing mod $p$, if $p\mid n$ then $X^{ab}(T(m,n))$ intersects the
components with $y=\pm 2$ tangentially at single point.
\end{Lemma}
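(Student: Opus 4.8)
The plan is to parameterise the abelian component explicitly and then compare tangent directions at the intersection points. Writing the image of a generator of $H_1(\mathbf{S}^3\setminus T(m,n))\cong\ZZ$ as a diagonal matrix $M$ with eigenvalue $\lambda$, an abelian representation sends $a\mapsto M^n$ and $b\mapsto M^m$ (this is forced by $m[a]=n[b]$ and $\gcd(m,n)=1$ in $H_1$), so that by \eqref{eqn:psi2ktraceAk} the abelian component is the rational curve
$$
\tau\longmapsto\bigl(\Psi_{2n}(\tau),\,\Psi_{2m}(\tau),\,\Psi_{2(n-m)}(\tau)\bigr),\qquad \tau=\lambda+\lambda^{-1}=\operatorname{tr}(M).
$$
Each remaining component is the line $\{x=x_0,\ y=y_0\}$ with $z$ free, whose tangent direction is $(0,0,1)$. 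Hence its intersection with the abelian curve is obtained by solving $\Psi_{2n}(\tau)=x_0$ and $\Psi_{2m}(\tau)=y_0$, and the intersection at such a point $\tau_0$ is transverse precisely when the tangent vector $\bigl(\Psi_{2n}'(\tau_0),\Psi_{2m}'(\tau_0),\Psi_{2(n-m)}'(\tau_0)\bigr)$ of the abelian curve is not parallel to $(0,0,1)$, i.e.\ when $\bigl(\Psi_{2n}'(\tau_0),\Psi_{2m}'(\tau_0)\bigr)\neq(0,0)$.

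For the first assertion I would use the closed form
$$
\Psi_{2n}'(\tau)=\frac{n\,\lambda^{1-n}\bigl(\lambda^{2n}-1\bigr)}{\lambda^2-1},
$$
valid in characteristic $0$ or coprime with $n$. At an intersection point coming from $\Phi_m(x_0)=\Phi_n(y_0)=0$ (resp.\ $\Psi_m(x_0)=\Psi_n(y_0)=0$) the matrix $a=M^n$ satisfies $a\neq\pm\operatorname{Id}$, so $\lambda^{2n}\neq1$ and $\lambda^2\neq1$; therefore $\Psi_{2n}'(\tau_0)\neq0$ and the intersection is transverse. To count the intersection points, I would use that $\gcd(m,n)=1$ gives $\mu_{mn}\cong\mu_m\times\mu_n$ and that $\lambda\mapsto\lambda^n$ (resp.\ $\lambda\mapsto\lambda^m$) is a bijection of $\mu_m$ (resp.\ $\mu_n$); writing $x_0=\zeta+\zeta^{-1}$ and $y_0=\eta+\eta^{-1}$, the solutions are determined by the independent choices $\lambda^n\in\{\zeta,\zeta^{-1}\}$ and $\lambda^m\in\{\eta,\eta^{-1}\}$. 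These four choices fall into two pairs under the involution $\lambda\leftrightarrow\lambda^{-1}$, which fixes the point on the abelian curve; since $x_0,y_0\neq\pm2$ in this range the two resulting values of $\tau_0$ are distinct, giving exactly two transverse intersection points.

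For the second assertion the key is the Frobenius identity $\operatorname{tr}(A^p)=\operatorname{tr}(A)^p$ in characteristic $p$: if $p\mid n$ and $n=p^rn'$ then $\Psi_{2n}(\tau)\equiv\Psi_{2n'}(\tau)^{p^r}\pmod p$, hence $\Psi_{2n}'(\tau)\equiv0$ identically. By Lemma~\ref{lemma:phipsi} the reduction of $\Phi_n$ (resp.\ $\Psi_n$) acquires the factor $y-2$ (resp.\ $y+2$), so the components with $y_0=2$ (resp.\ $y_0=-2$) appear; on the abelian curve the condition $y_0=\pm2$ forces $\lambda^m=\pm1$, whence $\lambda^{2m}=1$ and, via the closed form for $\Psi_{2m}'$, also $\Psi_{2m}'(\tau_0)=0$. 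Thus both first coordinates of the tangent vector vanish, while $\Psi_{2(n-m)}'(\tau_0)\neq0$ (since $p\nmid n-m$ and $\lambda^{2(n-m)}\neq1$), so the abelian curve is smooth there with tangent exactly along $(0,0,1)$ and the intersection is tangential. Finally, because $\lambda\mapsto\lambda^n$ is a bijection of $\mu_m$, the system $\lambda^m=\pm1$, $\lambda^n=\zeta_0^{\pm1}$ admits only a pair $\lambda,\lambda^{-1}$, which coincide on the abelian curve, so the two transverse points of the coprime case have collided into this single tangential one. I expect the main obstacle to be the bookkeeping in the point count—tracking the $\lambda\leftrightarrow\lambda^{-1}$ symmetry together with the odd/even distinctions of Lemma~\ref{lemma:phipsi}—rather than the derivative computations, which are routine.
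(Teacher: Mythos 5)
Your proof is correct, and its second half --- parametrising $X^{ab}(T(m,n))$ by $\tau=\operatorname{tr}(M)$ as $\bigl(\Psi_{2n}(\tau),\Psi_{2m}(\tau),\Psi_{2|n-m|}(\tau)\bigr)$ and showing that the $\tau$-derivatives of the first two coordinates vanish mod $p$ at the points with $y_0=\pm2$ --- is essentially the paper's argument; the paper gets the vanishing of $y'$ from the identity $\Psi_{2m}(\tau)-2=(\tau\mp2)\Phi_m(\tau)^2$ of \eqref{eqn:psi2kminus2}--\eqref{eqn:psi2kplus2} rather than from your closed form for $\Psi_{2m}'$, but the two computations are interchangeable. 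For the first half you take a genuinely different route. The paper observes that $X^{ab}(T(m,n))$ lies on the surface of reducible characters $x^2+y^2+z^2-xyz-4=0$, whose discriminant as a quadratic in $z$ is $(x^2-4)(y^2-4)$; for $x_0,y_0\neq\pm2$ the line $\{x=x_0,\ y=y_0\}$ therefore meets this surface in two simple points, which delivers the count and the transversality in one stroke. You instead show $\Psi_{2n}'(\tau_0)\neq0$ directly, so the tangent of the abelian curve is never parallel to $(0,0,1)$, and you count intersection points via the roots-of-unity analysis in $\mu_{mn}\cong\mu_m\times\mu_n$. What your approach buys is uniformity: the tangency mod $p$ appears transparently as the degeneration ($\Psi_{2n}'\equiv0$ and $\lambda^{2m}=1$) of exactly the quantities that were nonzero before, and the $\mu_m$-bijection argument explains cleanly why the two points collide into one. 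What the paper's discriminant buys is brevity and the fact that it automatically produces \emph{two distinct points of $\CC^3$}: in your count you check that the two values of $\tau_0$ are distinct, but to conclude that they give distinct characters you should also note that their $z$-coordinates, $\zeta\eta^{-1}+\zeta^{-1}\eta$ and $\zeta\eta+\zeta^{-1}\eta^{-1}$, differ (they coincide only if $\zeta^2=1$ or $\eta^2=1$, which is excluded). That is a one-line addition, not a gap in the method.
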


\begin{proof}
A necessary and sufficient condition for a representation to be abelian or
reducible is that the trace of the commutator of the generators $a$ and $b$ is
2; namely, that the coordinates satisfy
$$
x^2+y^2+z^2-xyz-4=0.
$$
The discriminant in $z$ of this equation is $(x-2)(x+2)(y-2)(y+2)$. This
implies that the intersection with a line given by $x=x_0$ and $y=y_0$
is transverse and consists precisely of two points if and only if
$x_0\neq \pm 2$ and $y_0\neq \pm 2$. This holds in characteristic 0 or
coprime with $m$ and $n$.

When $p\mid n$, if $y_0\neq \pm2$ then the discriminant does not vanish and the
previous discussion applies. So assume $y_0=2$. Notice that the abelianisation
$\pi_1(\mathbf{S}^3\setminus T(m,n))\to \ZZ$ maps $a$ to $n$ and $b$ to $m$. 
Hence, for any element $\gamma\in \pi_1(\mathbf{S}^3\setminus T(m,n))$ whose 
abelianisation generates $\ZZ$, if $\tau=\operatorname{tr}_\gamma$, then by 
\eqref{eqn:psi2ktraceAk} $X^{ab}(T(m,n))$ is parametrised by:
$$
x= \operatorname{tr}_{\gamma^n }= \Psi_{2n}(\tau), \quad 
y= \operatorname{tr}_{\gamma^m }= \Psi_{2m}(\tau), \quad
z= \operatorname{tr}_{\gamma^{-m+n}}= \Psi_{2|m-n|}(\tau).
$$
We are going to show that the derivatives of $x$ and $y$ with respect to $\tau$
vanish at any intersection point with coordinates $(x_0,2,z_0)$ (this yields
tangency with the component defined by $x=x_0$ and $y=y_0$). Since $p\mid n$,
by Lemma~\ref{lemma:phipsi} $x=\Psi_{2n}(\tau)\cong (\Psi_{2n}(\tau))^p$ and
its derivative $\mod p$  with respect to $\tau$ vanishes for any $\tau$.
For the derivative of $y$, write $y= 2+ (\Psi_{2m}(\tau)- 2)$ and use
\eqref{eqn:psi2kminus2}:
$$
y(\tau)= 2+ \begin{cases} 
             (\tau-2) \Phi_m(\tau)^2 & \textrm{for }k \textrm{ odd,} \\
             (\tau^2-4) \Phi_m(\tau)^2 & \textrm{for }k \textrm{ even.}
            \end{cases}
$$
As $x_0\neq \pm 2\mod p$, then $\tau_0\neq \pm 2$, hence $\Phi_m(\tau_0)=0$ and
the derivative of $y$ with respect to $\tau$ at $\tau=\tau_0$ vanishes.

When $y=-2$, the proof is similar, using \eqref{eqn:psi2kplus2}.
\end{proof}

\section{Satellite knots}
\label{sec:sat}

We will study in detail two examples, both with companion the trefoil knot.
Recall from the section on torus knots that the fundamental group of the
trefoil knot $3_1$ is
$$\pi_1(\mathbf{S}^3\setminus T(2,3))\cong 
\langle \beta,\gamma \mid \beta^2=\gamma^3\rangle,$$
where the fibre of the Seifert fibration is $\beta^2=\gamma^3$ and
$\gamma^{-1}\beta$ represents a meridian.

From this presentation, the character variety of the trefoil has coordinates
$
(\operatorname{tr}_\beta,  \operatorname{tr}_\gamma,  \operatorname{tr}_{\gamma^{-1}\beta})
$.
It has two components. Both components correspond to characters of
representations of quotients of the group: the characters of the first 
component correspond to representations where the centre is sent to minus the 
identity
$$
X^{irr}(3_1 )=\{(\operatorname{tr}_\beta,  \operatorname{tr}_\gamma,  \operatorname{tr}_{\gamma^{-1}\beta})\in\CC^3  \mid \operatorname{tr}_\beta=0,  \ \operatorname{tr}_\gamma=1 \},
$$
the characters of the second component correspond to abelian representations
$$
X^{ab}(3_1 )=\{(\operatorname{tr}_\beta,  \operatorname{tr}_\gamma,  \operatorname{tr}_{\gamma^{-1}\beta})\in\CC^3  \mid
\operatorname{tr}_\beta= \operatorname{tr}_{\gamma^{-1}\beta}^3-3 \, \operatorname{tr}_{\gamma^{-1}\beta},
\ 
\operatorname{tr}_\gamma= \operatorname{tr}_{\gamma^{-1}\beta}^2-2\},
$$
as ${\gamma^{-1}\beta}$ represents a meridian.
Because every representation of the satellite knots we will be considering 
induces a representation of the trefoil knot, one way to determine the 
representations, and hence the characters, of the satellite knots is to 
establish which representations of the patterns (in our case the $T(2,4)$ torus 
link, and the Whitehead link) can be glued along the common boundary to 
representations of the trefoil knot to give global representations.

\begin{center}
  \textsc{Example: a cable knot $K_c$}
 \end{center}

This knot exterior is obtained by gluing the exterior of the trefoil knot to
the exterior of the $T(2,4)$-torus link in such a way that the fibre of the
trefoil knot is identified to the meridian of one of the components of $T(2,4)$
while the meridian of the trefoil knot is glued to the longitude: this ensures
that the resulting knot has tunnel number one. A presentation for the torus
link is
$$\pi_1(\mathbf{S}^3\setminus T(2,4))\cong \langle \alpha, b \mid 
b\alpha b^{-1}\alpha=\alpha b^{-1}\alpha b \rangle. $$
Here $\alpha$ and $b$ represent meridians of the two components of $T(2,4)$.
One easily observes that the longitude associated to $b$ is
$\alpha b^{-1}\alpha b$, so that the exterior of the cable knot has the
following presentation:
$$\pi_1(\mathbf{S}^3\setminus K_c)\cong \langle \alpha, \beta \mid
b=\beta^2, \gamma^{-1}\beta=\alpha b^{-1}\alpha b, \beta^2=\gamma^3, 
b\alpha b^{-1}\alpha=\alpha b^{-1}\alpha b \rangle. $$

The variety of characters of $T(2,4)$ has coordinates
$(\operatorname{tr}_\alpha, \operatorname{tr}_b,
\operatorname{tr}_{\alpha b^{-1}}) $ and two components. One that maps the 
centre to the identity:
$$
X^{irr}(T(2,4))=\{  (\operatorname{tr}_\alpha, \operatorname{tr}_b,\operatorname{tr}_{\alpha b^{-1}}) \in\CC^3\mid  \operatorname{tr}_{\alpha b^{-1}}=0   \}
$$
and the abelian component
$$
X^{ab}(T(2,4))=\{  (\operatorname{tr}_\alpha, \operatorname{tr}_b,
\operatorname{tr}_{\alpha b^{-1}}) \in\CC^3\mid 
\operatorname{tr}_\alpha^2+ \operatorname{tr}_b^2+
\operatorname{tr}_{\alpha b^{-1}}^2 - 
\operatorname{tr}_\alpha \operatorname{tr}_b \operatorname{tr}_{\alpha b^{-1}}
-4=0 \}.
$$

To compute $X(K_c)$ we use coordinates
$$
t=\operatorname{tr}_\alpha,\ {z}=\operatorname{tr}_\beta,\  
x=\operatorname{tr}_{\alpha\beta^{-1}} .
$$
We start with $X(3_1)$ and $X(T(2,4))$ and we add the relations:
\begin{equation}
\label{eqn:cable}
 \begin{array}{l}
  \operatorname{tr}_b = \operatorname{tr}_{\beta^2}= {z}^2-2\\
 \operatorname{tr}_{\alpha b^{-1}}  =  \operatorname{tr}_{\alpha\beta ^{-2}}  = 
{z}\, x-t \\
 \operatorname{tr}_{\gamma}  =  
\operatorname{tr}_{\beta^{-1}\alpha^{-1} \beta^{2}\alpha^{-1} \ }  = 
t{{z}}^{2}x-{t}^{2}{z}-{{z}}^{3}-{z}{x}^{2}+tx+3\,{z} \\
 \operatorname{tr}_{\beta\gamma^{-1}}  =  
\operatorname{tr}_{\alpha b^{-1} \alpha b}= 
t{{z}}^{3}x-{t}^{2}{{z}}^{2}-{{z}}^{4}-{{z}}^{2}{x}^{2}+{t}^{2}+4\,{{z}}^{2}-2
 \end{array}
\end{equation}
We assume first that the restriction to $\pi_1(\mathbf{S}^3\setminus 3_1)$ lies 
in $X^{irr}( 3_1 )$. This imposes the conditions ${z}=0$ and 
$\operatorname{tr}_\gamma=1$, which in \eqref{eqn:cable} yield
$$
 \operatorname{tr}_b=-2, \quad  \operatorname{tr}_{\alpha b^{-1}}=-t, \quad 
1=t\, x, \quad \operatorname{tr}_{\beta\gamma^{-1}}= t^2-2
$$
These equalities are incompatible with $t_{\alpha b^{-1}}=0$, hence the 
intersection with $X^{irr}(T(2,4))$ is empty. On the other hand, these 
equalities imply that  
$$\operatorname{tr}_\alpha^2+ \operatorname{tr}_b^2+
\operatorname{tr}_{\alpha b^{-1}}^2 - 
\operatorname{tr}_\alpha \operatorname{tr}_b \operatorname{tr}_{\alpha b^{-1}}
-4=0,
$$
that defines $X^{ab}(T(2,4))$. Thus we get a first component
$$
X_1(K_c)=\{(t,{z},x)\in\CC^3\mid {z}=0,\ t\, x=1\}.
$$
Next we suppose that the restriction to $\pi_1(\mathbf{S}^3\setminus 3_1)$ lies 
in $X^{ab}( 3_1 )$, namely it is abelian. Therefore the corresponding 
representations factor through a Dehn filling on $T(2,4)$. One way to compute 
the Dehn filling is to add a generator $\eta$ and the relations
$$
\beta=\eta^3,\qquad \gamma=\eta^2,
$$
as those are the relations that abelianize the trefoil. The quotient of 
$\pi_1(\mathbf{S}^3\setminus K_c)$ by these relations is
$$
\pi_1(\mathbf{S}^3\setminus K_c)/\langle \beta=\eta^3,  \gamma=\eta^2\rangle\cong \langle\alpha,\eta\mid (\alpha^{-1}\eta^6)^2=\eta^{11}\rangle.
$$
Thus we get the group of the torus knot $T(2,11)$, and therefore
$$
X(K_c)\cong X_1(K_c)\cup X(T(2,11)).
$$
The six components of $X(T(2,11))$ in the coordinates $(t,{z},x)$ become:
$$
X_2(K_c)=\{(t,{z},x)\in\CC^3\mid  
  t=x\,{z}, \quad   
      {{z}}^{5}-{{z}}^{4}-4\,{{z}}^{3}+3\,{{z}}^{2}+3\,{z}
-1 =0
  \},
$$
for the five curves of irreducible representations, and 
$$
X^{ab}(K_c)= \{ (t,{z},x)\in\CC^3\mid x= {t}^{5}-5\,{t}^{3}+5\,t, \ 
{z}= {t}^{6}-6\,{t}^{4}+9\,{t}^{2}-2  \}.
$$
This last component is the abelian one. Notice that the polynomial equation 
says that  $x$ equals the trace of $\alpha^5$ and ${z}$ that of $\alpha^6$.
Also notice that the five components in $X_2(K_c)$ collapse to a single one 
mod 11, because
$$
  \left( {{z}}^{5}-{{z}}^{4}-4\,{{z}}^{3}+3\,{{z}}^{2}+3\,{z}
-1 \right) \equiv ({z}-2)^5\mod 11 .
$$


%

\begin{center}
  \textsc{Example: a Whitehead double $K_w$}
 \end{center}

In this example, the cable space $S^3-T(2,4)$ is replaced with the exterior of
the Whitehead link, with presentation
$$
\pi_1(\mathbf{S}^3\setminus 5_1^2)=
\langle \alpha, b \mid \alpha b^{-1}\alpha^{-1}b\alpha^{-1}b^{-1}\alpha=
b^{-1}\alpha b^{-1}\alpha^{-1}b \alpha^{-1}b^{-1}\alpha b\rangle ,
$$
where $\alpha$ and $b$ are meridians, see Figure~\ref{fig:DehnFilling}.
The gluing is as in the previous example, thus giving the following presentation
\begin{multline*}
\pi_1(\mathbf{S}^3\setminus K_w)\cong \langle \alpha, \beta \mid
b=\beta^2, 
\gamma^{-1}\beta=\alpha b^{-1}\alpha^{-1}b\alpha^{-1}b^{-1}\alpha b, 
\beta^2=\gamma^3,
\\
\alpha b^{-1}\alpha^{-1}b\alpha^{-1}b^{-1}\alpha=
b^{-1}\alpha b^{-1}\alpha^{-1}b \alpha^{-1}b^{-1}\alpha b
\rangle.
\end{multline*}

\begin{Claim}
There is no character in $X(K_w)$ whose restriction to the trefoil lies in 
$X^{irr}(3_1)$.
\end{Claim}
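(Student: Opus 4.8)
The plan is to assume such a character exists and derive a contradiction directly from the given presentation of $\pi_1(\mathbf{S}^3\setminus K_w)$, exploiting the fact that an irreducible character of the trefoil forces the companion fibre $b=\beta^2$ to map to $-\operatorname{Id}$. Concretely, let $\rho$ be a representation whose restriction to the trefoil subgroup $\langle\beta,\gamma\mid\beta^2=\gamma^3\rangle$ has character in $X^{irr}(3_1)$, so that $\operatorname{tr}_\beta=0$ and $\operatorname{tr}_\gamma=1$. The first step is to observe that $\operatorname{tr}(\rho(\beta))=0$ together with the Cayley--Hamilton identity $\rho(\beta)^2-\operatorname{tr}(\rho(\beta))\rho(\beta)+\operatorname{Id}=0$ gives $\rho(\beta)^2=-\operatorname{Id}$, whence $\rho(b)=\rho(\beta^2)=-\operatorname{Id}$. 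This conclusion holds for every representation realising the given character, since $-\operatorname{Id}$ is central and hence conjugation-invariant.

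The second and decisive step is to feed $\rho(b)=-\operatorname{Id}$ into the relations coming from the Whitehead link. I would first point out that the defining relation $\alpha b^{-1}\alpha^{-1}b\alpha^{-1}b^{-1}\alpha=b^{-1}\alpha b^{-1}\alpha^{-1}b\alpha^{-1}b^{-1}\alpha b$ becomes vacuous: with $b=-\operatorname{Id}$ central, both sides collapse to $-\operatorname{Id}$, so this relation imposes no constraint. The real constraint must therefore come from the gluing relation $\gamma^{-1}\beta=\alpha b^{-1}\alpha^{-1}b\alpha^{-1}b^{-1}\alpha b$, which identifies the trefoil meridian with the Whitehead longitude. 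Substituting $\rho(b)=-\operatorname{Id}$ into the word on the right, the four occurrences of $b^{\pm1}$ each contribute $-\operatorname{Id}$, so their product is $(-\operatorname{Id})^4=\operatorname{Id}$, while the remaining letters telescope as $\alpha\alpha^{-1}\alpha^{-1}\alpha=\operatorname{Id}$. Hence $\rho(\gamma^{-1}\beta)=\operatorname{Id}$, i.e.\ $\rho(\gamma)=\rho(\beta)$.

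The contradiction is then immediate: $\rho(\gamma)=\rho(\beta)$ forces $\operatorname{tr}_\gamma=\operatorname{tr}_\beta=0$, which is incompatible with $\operatorname{tr}_\gamma=1$. Therefore no character of $X(K_w)$ can restrict to an irreducible character of the trefoil, proving the Claim.

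I expect the only delicate point to be the bookkeeping in the second step, in particular recognising that the Whitehead defining relation is automatically satisfied (and therefore useless once $b=-\operatorname{Id}$), so that the whole argument hinges on the longitude word trivialising. Everything else reduces to a one-line trace computation, and the structure of the argument mirrors exactly the corresponding step for the cable knot $K_c$, where the analogous restriction to $X^{irr}(3_1)$ also forced $b\mapsto-\operatorname{Id}$.
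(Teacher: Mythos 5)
Your proof is correct, and its skeleton is the same as the paper's: extract $\rho(b)=\rho(\beta)^2=-\operatorname{Id}$ from $\operatorname{tr}_\beta=0$, and then show that the longitude word to which the trefoil meridian $\gamma^{-1}\beta$ is glued becomes central, contradicting the irreducibility constraints on the trefoil side. Where you differ is in how the central step is justified: the paper argues topologically that the longitude commuting with $b$ must become trivial in $\mathrm{PSL}_2(\CC)$ because the components of the Whitehead link are unknotted and their linking number vanishes, and then concludes by noting that no representation in $X^{irr}(3_1)$ has peripheral subgroup mapping into the centre; you instead substitute $b\mapsto-\operatorname{Id}$ into the explicit word $\alpha b^{-1}\alpha^{-1}b\alpha^{-1}b^{-1}\alpha b$ and check that it telescopes to $\operatorname{Id}$, getting the cleaner contradiction $\operatorname{tr}_\gamma=\operatorname{tr}_\beta$, i.e.\ $1=0$. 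It is worth observing that your telescoping is not a happy accident of this particular word: once $b$ is central, what remains is a word in the single letter $\alpha$ whose exponent sum is exactly the linking number, and the exponent sum of $b$ in a longitude is zero as well; so your computation is precisely the concrete instance of the paper's homological argument, and your version would generalise verbatim to any pattern link with unknotted, zero-linking components. The trade-off is that the paper's argument is presentation-free, while yours is self-contained and checkable by hand from the stated presentation of $\pi_1(\mathbf{S}^3\setminus K_w)$.
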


\begin{proof}
Assume there is such a character $\chi_\rho$. The gluing requires that a 
meridian of the Whitehead link, $b$, is glued to the fibre of the trefoil. 
Hence, $\rho(b)=-\operatorname{Id}$, i.e.~it is trivial in 
$\mathrm{PSL}_2(\CC)$. We notice that the image by $\rho$ of the longitude that 
commutes with $b$ must also be trivial in $\mathrm{PSL}_2(\CC)$, because each 
component of the Whitehead link is a trivial knot and the linking number 
between the two components vanishes. As there is no representation in 
$X^{irr}(3_1)$ whose restriction to the peripheral subgroup is trivial in 
$\mathrm{PSL}_2(\CC)$, we get a contradiction.
\end{proof}

Hence all characters in $X(K_w)$ restrict to abelian characters in 
$X^{ab}(3_1)$. Therefore, we obtain the same representations we get by 
replacing the trefoil knot exterior by a solid torus. In other terms, these
representations correspond to representations of the knot obtained by
Dehn-filling one of the two components of the Whitehead link $(5^2_1)_r$:
$$
X(K_h)\cong X((5^2_1)_{r}),
$$
where 
$r\in\mathbb{Q}$ is the slope of the filling.
To determine $r$, 
we add the generator $\eta$ and the relations
$$
\beta=\eta^3,\qquad \gamma=\eta^2,
$$
to the presentation of $\pi_1(\mathbf{S}^3\setminus K_w)$
as in the previous example, because those are precisely the abelianisation 
relations of the trefoil. In particular $\eta$ is a longitude that commutes 
with $b$ (i.e.~they are in the same component and in the same choice of 
peripheral group in the conjugacy class). Thus, we get the presentation
$$
\pi_1(\mathbf{S}^3\setminus (5^2_1)_r)=
\langle \alpha, b, \gamma \mid
b= \gamma^6, 
\gamma=\alpha b^{-1}\alpha^{-1}b\alpha^{-1}b^{-1}\alpha b, 
b \gamma= \gamma b\rangle .
$$
Hence, the filling meridian is $b \gamma^{-6}$, i.e.~the filling slope is 
$r=-1/6$.

\begin{figure}[h]
 \begin{center}
\begin{tikzpicture}[scale=.5,line cap=round]
\draw [thick, color=black] (2,0) arc [radius=2, start angle=180, end angle= 0];
\draw [thick, color=black] (0,0) arc [radius=4, start angle=180, end angle= 0];
\draw [thick, color=black] (0,0) arc [radius=1, start angle=180, end angle= 210];
\draw [thick, color=black] (2,0) arc [radius=1, start angle=360, end angle= 240];
\draw [thick, color=black] (0,-1.5) arc [radius=4, start angle=180, end angle= 360];
\draw [thick, color=black] (2,-1.5) arc [radius=2, start angle=180, end angle= 360];
\draw [thick, color=black] (0,-1.5) arc [radius=1, start angle=180, end angle= 60];
\draw [thick, color=black] (2,-1.5) arc [radius=1, start angle=0, end angle= 30];
\draw[thick, color=black]  (5.2,.5) arc (135:415: 2.5 and .8);
\draw [color=blue, thick] (6.7,3.2)--(7.1,3.6);  
\draw [->, color=blue, thick] (6.4,2.9)--(5.9,2.4); 
\node [color=blue] at (6.6,3.9) {$a$};
 \draw [line width=5pt, color=white] (4.5,-.75) arc [radius=.5, start angle=235, end angle= 45]; 
 \draw [thick, color=blue, ->] (4.5,-.75) arc [radius=.5, start angle=235, end angle= 45]; 
\node [color=blue] at (3.9,0) {$b$};
\draw[thick, color=magenta, ->]  (6,-1) arc (245:300: 2.5 and .8);
\node [color=magenta] at (7.5,-1.5) {$\gamma$};
 \end{tikzpicture}
 \end{center}
  \caption{The meridians $a$ and $b$ and the longitude $u$.} 
 \label{fig:DehnFilling}
\end{figure}
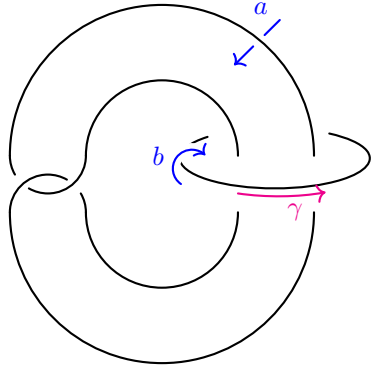

To understand the manifold from the Dehn filling with slope $b \gamma^{-6}$,
the standard procedure is to cut open along a disc bounded by the component of 
the link with meridian $b$, apply 6 turns, glue back again, so that the new 
filling slope becomes $1/0=\infty$. This leads to a twist knot with twelve 
half-twists, that would not be alternate, and simplifies to 11 half-twists as 
in Figure~\ref{fig:1123}. This is the two-bridge knot $11/23$.
The variety of characters of this two-bridge knot has been discussed in 
Section~\ref{sec:twobridge}. It ramifies for $p=23$.

\color{black}

\begin{figure}[h]
 \begin{center}
  \begin{tikzpicture}[scale=.5,line cap=round]
\begin{scope}[yscale=.75]
    \braid[line width=1pt,  color=black] s_1 s_1 s_1 s_1 s_1 s_1 s_1 s_1 s_1 s_1 s_1  ;
\end{scope}
\draw [thick, color=black] (1,0) arc [radius=.25, start angle=0, end angle= 90];
\draw [thick, color=black] (2,0) arc [radius=1.25, start angle=0, end angle= 90];
\draw [thick, color=black] (1,-8.625) arc [radius=.25, start angle=0, end angle= -90];
\draw [thick, color=black] (2,-8.625) arc [radius=1.25, start angle=0, end angle= -90];
\draw [thick, color=black] (0.75,0.25) arc [radius=4, start angle=90, end angle= 180];
\draw [thick, color=black] (0.75,1.25) arc [radius=5, start angle=90, end angle= 180];
\draw [thick, color=black] (0.75,-8.875) arc [radius=4, start angle=-90, end angle= -180];
\draw [thick, color=black] (0.75,-9.875) arc [radius=5, start angle=-90, end angle= -180];
\draw [thick, color=black] (-4.25,-3.75) arc [radius=.5, start angle=180, end angle= 290];
\draw [thick, color=black] (-3.25,-3.75) arc [radius=.5, start angle=360, end angle= 330];
\draw [thick, color=black] (-4.25,-4.85)--(-4.25,-4.5);
\draw [thick, color=black] (-3.25,-4.85)--(-3.25,-4.5);
\draw [thick, color=black] (-4.25,-4.5) arc [radius=.5, start angle=180, end angle= 150];
\draw [thick, color=black] (-3.25,-4.5) arc [radius=.5, start angle=0, end angle= 120];
\draw [thick, decorate,decoration={brace,amplitude=10pt,mirror,raise=4pt},yshift=0pt]
(2.25,-8.5) -- (2.25,-.5) node [black,midway,xshift=0.8cm] {\footnotesize
$11$};

\begin{scope}[shift={(-12,.5)}]
 \begin{scope}[yscale=.75]
    \braid[line width=1pt,  color=black] s_1 s_1 s_1 s_1 s_1 s_1 s_1 s_1 s_1 s_1 s_1 s_1 ;
\end{scope}
\draw [thick, color=black] (1,0) arc [radius=.25, start angle=0, end angle= 90];
\draw [thick, color=black] (2,0) arc [radius=1.25, start angle=0, end angle= 90];
\draw [thick, color=black] (1,-9.325) arc [radius=.25, start angle=0, end angle= -90];
\draw [thick, color=black] (2,-9.325) arc [radius=1.25, start angle=0, end angle= -90];
\draw [thick, color=black] (0.75,0.25) arc [radius=4, start angle=90, end angle= 180];
\draw [thick, color=black] (0.75,1.25) arc [radius=5, start angle=90, end angle= 180];
\draw [thick, color=black] (0.75,-9.575) arc [radius=4, start angle=-90, end angle= -180];
\draw [thick, color=black] (0.75,-10.575) arc [radius=5, start angle=-90, end angle= -180];
\begin{scope}[shift={(0,-.5)}]
\draw [thick, color=black] (-4.25,-3.75) arc [radius=.5, start angle=180, end angle= 210];
\draw [thick, color=black] (-3.25,-3.75) arc [radius=.5, start angle=360, end angle= 240];
\draw [thick, color=black] (-4.25,-5.1)--(-4.25,-4.5);
\draw [thick, color=black] (-3.25,-5.1)--(-3.25,-4.5);
\draw [thick, color=black] (-4.25,-4.5) arc [radius=.5, start angle=180, end angle= 60];
\draw [thick, color=black] (-3.25,-4.5) arc [radius=.5, start angle=0, end angle= 30];
\draw [thick, color=black] (-4.25,-3.75)--(-4.25,-3.25);
\draw [thick, color=black] (-3.25,-3.75)--(-3.25,-3.25);
\end{scope}
\draw [thick, decorate,decoration={brace,amplitude=10pt,mirror,raise=4pt},yshift=0pt]
(2.25,-8.5) -- (2.25,-.5) node [black,midway,xshift=0.8cm] {\footnotesize
$12$};
\end{scope}
 \end{tikzpicture}
 \end{center}
  \caption{The projection on the left has 12 half-twists but it is not 
alternate, it simplifies to the projection on the right, which is a diagram of 
the $2$-bridge knot $11/23$.} 
 \label{fig:1123}
\end{figure}
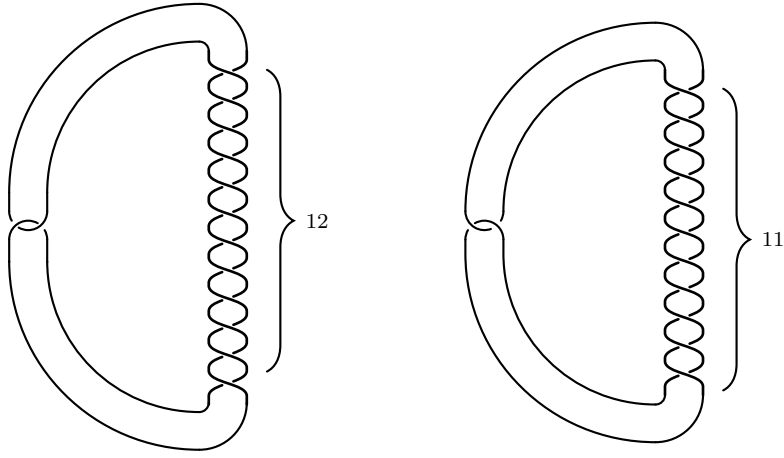



\begin{appendices}

\section{Computations for the $8_5$}
\label{appendix:8_5}

From Figure~\ref{fig:8_5}, using Wirtinger presentation we compute:
$$
\pi_1(\mathbf{S}^3\setminus 8_5)\cong \langle\alpha, \beta, \gamma \mid [\alpha^{-1},\gamma^{-1}] \alpha [\beta^{-1},\alpha]=\gamma,\
                                         [\beta^{-1},\alpha] \beta [\gamma,\beta]=\gamma
\rangle
$$
Next we perform elementary changes to obtain a presentation with two 
generators. Firstly we introduce the elements
$ w=\alpha\gamma$ and $ z=\beta\gamma
$
and we eliminate  $\alpha$ and $\beta$: 
\begin{multline*}
\pi_1(\mathbf{S}^3\setminus 8_5)\cong \langle \gamma, w, z\mid
w^{-1} \gamma^{-1} w^2 z^{-1} w \gamma^{-1} z w^{-1} \\ =
z^{-2} w \gamma^{-1} z w^{-1} z^2 \gamma^{-1} z^{-1} = 1  \rangle
 \end{multline*}
Setting $a=w^{-1} z^2$ and eliminating $w$ we get:
\begin{multline*}
 \pi_1(\mathbf{S}^3\setminus 8_5)\cong \langle \gamma, a, z\mid
 a z^{-2} \gamma^{-1} z^2 a^{-1} z^2 a^{-1} z (a^{-1} \gamma^{-1} z a) z^{-2} = \\
a^{-1} \gamma^{-1} z a \gamma^{-1}  =1
\rangle 
\end{multline*}
The second relation allows to write $z=\gamma a \gamma a^{-1}$  and to simplify the bracket in the first one:
$$
a z^{-2} \gamma^{-1} z^2 a^{-1} z^2 a^{-1} z \gamma z^{-2}=1   .
$$
From this relation we get
$$
a z^{-2}\gamma^{-1}z^2 a^{-1} = z^2\gamma^{-1} z^{-1} a z^{-2},
$$
which tells that $\gamma^{-1}z^{-1}a$ is conjugate to $\gamma^{-1}$. Taking traces it yields the equation
\begin{equation}
\label{eq:8_5_1}
 {t}^{2}{x}^{2}y+t \left( -{x}^{3}-2\,x{y}^{2}+2\,x-1 \right) +{x}^{2}y
+{y}^{3}-3\,y
=0
\end{equation}
Next taking traces on the relation
$$
\gamma^{-1} z^2 a^{-1} z^2 a^{-1} z\gamma= z^2 a z^2
$$
gives
\begin{multline}
 \label{eq:8_5_2}
  ( {t}^{2}{x}^{3}{y}^{2}-2\,t{x}^{4}y-2\,t{x}^{2}{y}^{3}-{y}^{2}x
{t}^{2}+{x}^{5}+2\,{x}^{3}{y}^{2}+x{y}^{4}+6\,t{x}^{2}y+t{y}^{3}
\\
-txy-5
\,{x}^{3}-5\,x{y}^{2}-2\,ty+{x}^{2}+{y}^{2}+5\,x-2 ) \\
\times ( {t}
^{3}{x}^{4}{y}^{3}-3\,{t}^{2}{x}^{5}{y}^{2}-3\,{t}^{2}{x}^{3}{y}^{4}-{
y}^{3}{x}^{2}{t}^{3}+3\,t{x}^{6}y+6\,t{x}^{4}{y}^{3}+3\,t{x}^{2}{y}^{5
}\\
+9\,{t}^{2}{x}^{3}{y}^{2}+2\,{t}^{2}x{y}^{4}-{x}^{7}-3\,{x}^{5}{y}^{2
}-3\,{x}^{3}{y}^{4}-x{y}^{6}
-15\,t{x}^{4}y
\\
-16\,t{x}^{2}{y}^{3}-t{y}^{5
}
-4\,{y}^{2}x{t}^{2}+7\,{x}^{5}
+14\,{x}^{3}{y}^{2}
+7\,x{y}^{4}
\\
+18\,t{x
}^{2}y+4\,t{y}^{3}-14\,{x}^{3}-14\,x{y}^{2}-3\,ty+7\,x+1 ) =0
\end{multline}

We consider the ideal of the polynomials generated by \eqref{eq:8_5_1} and 
\eqref{eq:8_5_2}. We then use symbolic software to look for its prime 
decomposition (namely, the decomposition of its radical) and we get the five 
components we described. This gives an algebraic set that contains $X(8_{5})$. 
We check that it is precisely $X(8_{5})$ exploiting the fact that each 
component intersects $t=0$ in one of the points described in 
Section~\ref{sec:pretzel}.

The equations  are invariant by the symmetry $(t,x,y)\mapsto (-t, x, -y)$, 
because $t$ and $y$ are traces of elements whose projections to 
$\mathbb Z/2 \mathbb Z$ are nontrivial, and $x$ is the trace of an element that 
projects to zero in $\mathbb Z/2 \mathbb Z$.

\section{Computations for the $8_{18}$}
\label{appendix:8_18}

To obtain a presentation of its fundamental group, we start with three 
meridians $\alpha$, $\beta$ and $\gamma$ in Figure~\ref{fig:8_18}. We apply the 
map induced by the period $\phi^{\pm 1}_*$ on these generators, 
which is equivalent to moving right and left in Figure~\ref{fig:braid}.

 \begin{figure}[h]
 \begin{center}
  \begin{tikzpicture}[scale=1]
 \begin{scope}[shift={(2,0)}]
  \draw [->] (0.1,1.1) -- (0.1,1.4); 
  \draw (0.1, 0.7)--(0.1,0.9); 
  \draw [->] (0.1,2.1) -- (0.1,2.4); 
  \draw (0.1, 1.7)--(0.1,1.9);
  \draw [->] (0.1,3.1) -- (0.1,3.4); 
  \draw (0.1, 2.7)--(0.1,2.9);
 \braid[line width=1pt,  color=black, rotate=90] s_2^{-1} s_1;
  \draw [->] (2.4,1.1) -- (2.4,1.4); 
  \draw (2.4, 0.7)--(2.4,0.9); 
  \draw [->] (2.4,2.1) -- (2.4,2.4); 
  \draw (2.4, 1.7)--(2.4,1.9);
  \draw [->] (2.4,3.1) -- (2.4,3.4); 
  \draw (2.4, 2.7)--(2.4,2.9);
  \node at (-0.2,1) {$\gamma$};
  \node at (-0.2,2) {$\beta$};
  \node at (-0.2,3) {$\alpha$};
  \node at (3.2,3) {$\alpha^{-1}\beta\alpha$};
  \node at (2.7,2) {$\gamma$};
  \node at (3.2,1) {$\gamma^{-1}\alpha\gamma$};
   \end{scope}
    \begin{scope}[shift={(-2,0)}]
  \draw [->] (0.1,1.1) -- (0.1,1.4); 
  \draw (0.1, 0.7)--(0.1,0.9); 
  \draw [->] (0.1,2.1) -- (0.1,2.4); 
  \draw (0.1, 1.7)--(0.1,1.9);
  \draw [->] (0.1,3.1) -- (0.1,3.4); 
  \draw (0.1, 2.7)--(0.1,2.9);
 \braid[line width=1pt,  color=black, rotate=90] s_2^{-1} s_1;
  \draw [->] (2.4,1.1) -- (2.4,1.4); 
  \draw (2.4, 0.7)--(2.4,0.9); 
  \draw [->] (2.4,2.1) -- (2.4,2.4); 
  \draw (2.4, 1.7)--(2.4,1.9);
  \draw [->] (2.4,3.1) -- (2.4,3.4); 
  \draw (2.4, 2.7)--(2.4,2.9);
  \node at (-0.2,1) {$\beta$};
  \node at (-1.4,2) {$\beta^{-1}\gamma\beta\alpha\beta^{-1}\gamma^{-1}\beta$};
  \node at (-0.6,3) {$\beta^{-1}\gamma\beta$};
  \node at (2.7,3) {$\alpha$};
  \node at (2.7,2) {$\beta$};
  \node at (2.7,1) {$\gamma$};
   \end{scope}
\end{tikzpicture}
 \end{center}
  \caption{The generators $\alpha$, $\beta$ and $\gamma$ and their image by $\phi_*^{\pm 1}$ }
 \label{fig:braid}
\end{figure}
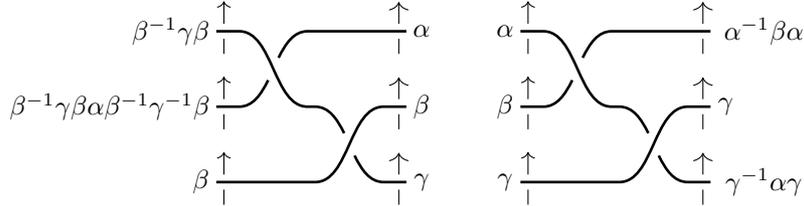
%
%
If we iterate this process once in each direction, we obtain the same elements 
in the fundamental group, yielding to relations. More precisely, the upper and 
lower strands yield
\begin{align*}
 \beta^{-1}\gamma\beta\alpha^{-1}\beta^{-1}\gamma^{-1}\beta 
\gamma\beta\alpha\beta^{-1}\gamma^{-1}\beta & = 
\alpha^{-1}\beta^{-1}\alpha\gamma\alpha^{-1}\beta\alpha \\
 \beta^{-1}\gamma\beta\alpha\beta^{-1}\gamma^{-1}\beta & = 
\gamma\alpha\gamma^{-1}\alpha^{-1}\beta\alpha\gamma\alpha^{-1}\gamma^{-1}
\end{align*}
Since one relation is redundant, we skip the relation of the middle strand, 
for it is longer. As they are written, these relations establish that the 
meridians $\alpha$, $\beta$ and $\gamma$ are conjugate, so that by taking
traces we would get the equality $t=t$. Instead, by modifying them slightly,
we derive other equivalent relations, from which we compute and equate traces:
\begin{align}
\beta\gamma\alpha\gamma^{-1}\alpha^{-1}\beta\alpha\gamma &
= \gamma\beta\alpha\beta^{-1}\gamma^{-1}\beta\gamma\alpha  \label{eq:rel1} 
\\ 
\gamma\beta\alpha^{-1}\beta^{-1}\gamma^{-1}\beta\gamma\beta\alpha\beta^{-1} & 
=  \beta\alpha^{-1}\beta^{-1}\alpha\gamma\alpha^{-1}\beta\alpha\beta^{-1}\gamma \label{eq:rel2}
\\
\gamma^{-1}\beta\gamma\alpha\gamma^{-1}\alpha^{-1}\beta\alpha\gamma & =  
\beta\alpha\beta^{-1}\gamma^{-1}\beta\gamma\alpha \label{eq:rel3}
\\
\gamma\alpha\gamma^{-1}\alpha^{-1}\beta\alpha\gamma\alpha^{-1} & 
= \beta^{-1}\gamma\beta\alpha\beta^{-1}\gamma^{-1}\beta\gamma
\label{eq:rel4}
\end{align}
Taking traces to~\eqref{eq:rel1} and writing them as polynomials in our 
coordinates yields:
\begin{multline*}
-{t}^{6}z+{t}^{6}+{t}^{4}xz+2\,{t}^{4}yz+{t}^{4}{z}^{2}-2\,{t}^{4}y-{t
}^{3}wz-2\,{t}^{2}xyz-{t}^{2}x{z}^{2}\\-{t}^{2}{y}^{2}z-{t}^{2}y{z}^{2}
-{t}^{4}+{t}^{3}w-{t}^{2}{x}^{2}+{t}^{2}xz+{t}^{2}{y}^{2}
+twxz+twyz+tw{
z}^{2}\\+x{y}^{2}z+{t}^{2}y-2\,{t}^{2}z-twy-twz-{w}^{2}z+{x}^{2}y+y{z}^{
2}-{z}^{3}+{t}^{2}-y+3\,z=0
 \end{multline*}
We do the same for~\eqref{eq:rel2}:
\begin{multline*}
 {t}^{6}x+{t}^{5}wx-{t}^{4}{x}^{2}y-{t}^{6}-{t}^{5}w+{t}^{4}{w}^{2}-{t}
^{4}{x}^{2}-{t}^{4}xy-2\,{t}^{4}xz-{t}^{3}w{x}^{2}-3\,{t}^{3}wxy
\\
-{t}^{3}wxz
+{t}^{2}{x}^{3}y
+2\,{t}^{2}{x}^{2}{y}^{2}+{t}^{2}{x}^{2}yz
+{t}^{4
}x+2\,{t}^{4}y+{t}^{4}z+3\,{t}^{3}wx+{t}^{3}wy+{t}^{2}{w}^{2}x-{t}^{2}
{w}^{2}y
\\
-{t}^{2}{x}^{2}y
+2\,{t}^{2}{x}^{2}z
+2\,{t}^{2}xyz+{t}^{2}x{z}^
{2}-tw{x}^{2}y+tw{x}^{2}z+2\,twx{y}^{2}
-{x}^{3}yz-{x}^{2}{y}^{3}
+{t}^{
4}
\\
-{t}^{3}w-3\,{t}^{2}{w}^{2}
-{t}^{2}xz-{t}^{2}{y}^{2}-{t}^{2}yz+3\,tw
xy
-2\,twxz+2\,twyz-{x}^{2}{y}^{2}-{x}^{2}{z}^{2}
\\
-2\,x{y}^{2}z-2\,{t}^{
2}x
-{t}^{2}y-twx+twz+{x}^{2}y-y{z}^{2}-{t}^{2}+tw+xz+y+2=0
\end{multline*}
For~\eqref{eq:rel3}:
\begin{equation*}
 \left( {t}^{2}-y-z \right)  \left( {t}^{3}z-{t}^{3}-{t}^{2}w-txz+xt+
ty-zt+wx \right) =0
 \end{equation*}
And for \eqref{eq:rel4}:
\begin{multline*}
- \left( {t}^{2}-y-z \right) ( {t}^{4}z-2\,{t}^{4}-{t}^{2}xz-{t}
^{2}{z}^{2}  +2\,{t}^{2}x-twy+twz+x{y}^{2}-xyz \\ +x{z}^{2}+3\,{t}^{2}-tw+yz
-2\,x-1 ) =0
 \end{multline*}
According to \cite{AcunaMontesinos} for instance, since the presentation of the
group has three generators, there is an additional polynomial relation between
the coordinates of the character variety. 
In our situation, since $t= tr_{\alpha }= tr_{\beta}= tr_{\gamma}$ the equation
reads:
\begin{equation}
\label{eq:pq}
w^2+  ( {t}^{3}-t\, x-t\, y-t\,z   )w-{t}^{2}x-{t}^{2}y-{t}^{2}z+xyz+3\,{t}^{2}
+{x}^{2}+{y}^{2}+{z}^{2}-4= 0
\end{equation}
It turns out that these five polynomials still do not generate the ideal. To
see that, we consider again the action of the period $\phi$. The induced action 
on the characters is given by $\chi\mapsto\chi\circ\phi^{-1}_*$, where $\phi_*$ 
is the action on the fundamental group, By Figure~\ref{fig:braid} (right) 
$\phi_* ^{-1}$ is given by
$$
\begin{array}{r}
\alpha \\
\beta \\
\gamma
\end{array}
\mapsto
\begin{array}{l}
 \alpha^{-1}\beta\alpha\\
 \gamma\\
 \gamma^{-1}\alpha\gamma
\end{array}
$$
We consider the ideal generated by taking the trace in 
Equations~\eqref{eq:rel1}, \eqref{eq:rel2}, \eqref{eq:rel3} and 
\eqref{eq:rel4}, their image by $\phi_*$, and Equation~\eqref{eq:pq}, thus a 
total of 9 polynomials. We then use symbolic software to look for its prime 
decomposition (namely, the decomposition of its radical) and we get the eleven 
components  we described. This gives an algebraic set that contains 
$X(8_{18})$. We check that it is precisely $X(8_{18})$ because each component 
intersects $t=0$ in one of the points described in Section~\ref{sec:t=0}.

We now describe the action of the involutions $\sigma_1$ and $\sigma_2$ on the 
variety of characters.

From Figure~\ref{fig:8_18} the action of $\sigma_1$ on the fundamental group is
$$
\begin{array}{r}
\alpha \\
\beta \\
\gamma
\end{array}
\mapsto
\begin{array}{l}
 \alpha^{-1}\beta^{-1}\alpha\\
   \alpha^{-1}\beta^{-1}\alpha^{-1}\beta\alpha \\
 \alpha^{-1}\beta^{-1} \gamma^{-1}\beta\alpha
\end{array}
$$
Up to inner homomorphisms, it can be simplified as:
$$
\begin{array}{r}
\alpha \\
\beta \\
\gamma
\end{array}
\mapsto
\begin{array}{l}
\beta^{-1}\\
   \alpha^{-1} \\
\gamma^{-1}
\end{array}
$$
Hence $\sigma_1^*\colon X(8_{18})\to X(8_{18})$  writes as:
$$
\begin{pmatrix}
 t\\
 x\\
 y\\
 z\\
 w
\end{pmatrix}
\mapsto
\begin{pmatrix}
 t \\
 x \\
 t^2-z \\
 t^2-y \\
 t^3-t\, y-t\, z+w
 \end{pmatrix}
$$

From Figure~\ref{fig:8_18},  on the fundamental group $\sigma_2$ acts as
$$
\begin{matrix}
\alpha \\
\beta \\
\gamma
\end{matrix}
\mapsto
\begin{matrix}
 \gamma\\
 \gamma\beta\gamma^{-1}\\
 \gamma\beta\alpha\beta^{-1}\gamma^{-1}
\end{matrix}
$$
and on the variety of characters 
$$
\begin{pmatrix}
 t\\
 x\\
 y\\
 z\\
 w
\end{pmatrix}
\mapsto
\begin{pmatrix}
 t \\
 y \\
 x \\
 t^2+t\, w -x\, y-z\\
 w
 \end{pmatrix}
$$

The map induced by the period $\phi$ on the variety of characters is
$$
\begin{pmatrix}
 t\\
 x\\
 y\\
 z\\
 w
\end{pmatrix}
\mapsto
\begin{pmatrix}
 t\\
 -{t}^{4}+{t}^{2}x+{t}^{2}y+z{t}^{2}+{t}^{2}-tw-xz-y \\
 {t}^{2}-z  \\
 -{t}^{4}z+{t}^{4}+{t}^{2}xz+yz{t}^{2}+{z}^{2}{t}^{2}-{t}^{2}x-{t}^{2}y
-zwt-x{z}^{2}+tw-yz+x
  \\
 -{t}^{5}+{t}^{3}x+{t}^{3}y+z{t}^{3}+2\,{t}^{3}-w{t}^{2}-xzt-xt-2\,ty-z
\end{pmatrix}
$$
From this we may derive the description of the action of the symmetry group on 
the variety of characters given in Section~\ref{sec:pihyperbolic}.

\section{The polynomials $\Phi_k$ and $\Psi_k$}
\label{section:PhiPsi}
Here we provide the proof of some of the results stated 
in Section~\ref{sec:torus} for the polynomials $\Phi_k$ and $\Psi_k$ of 
Definition~\ref{Def:PsiPhi}. First of all we rewrite  
Definition~\ref{Def:PsiPhi} to make computations simpler. The polynomials 
are determined by the conditions 
$$
\Phi_k(\lambda+{\lambda}^{-1})=\begin{cases}
                                 \dfrac{\lambda^{k/2}-\lambda^{-k/2}}{\lambda^{1/2}-\lambda^{-1/2}}  & k\textrm{ odd}  \\[7pt]
                                 \dfrac{ \lambda^{k/2}-\lambda^{-k/2} }{\lambda-\lambda^{-1}}  & k\textrm{ even} 
                                \end{cases}
$$
and
$$
\Psi_k(\lambda+{\lambda}^{-1})=\begin{cases}
                                 \dfrac{\lambda^{k/2}+\lambda^{-k/2}}{\lambda ^{1/2}+\lambda^{-1/2}} & k\textrm{ odd}  \\[7pt]
                                 ({ \lambda^{k/2}+\lambda^{-k/2} })  & k\textrm{ even}  
                                \end{cases}
$$
for every $\lambda\in\CC^*$.
As $\Psi_{2k}(\lambda+{\lambda}^{-1})=\lambda^k+\lambda^ {-k}$, 
by viewing ${\lambda}^{\pm 1}$ as the eigenvalues of a matrix, \eqref{eqn:psi2ktraceAk} is clear.

Now we give the proof of \eqref{eqn:psi2kminus2}, but we skip the proof of 
\eqref{eqn:psi2kplus2} as it is analogous.
\begin{proof}[Proof of \eqref{eqn:psi2kminus2}]
Set $u=\lambda+\lambda^{-1}$. For every $k$ we have
\begin{equation}
 \label{eqn:Psi2k}
\Psi_{2k}(u)- 2= \lambda^k+\lambda^ {-k}-2= (\lambda^{k/2}-\lambda^{-k/2} )^2 
\end{equation}
When $k$ is odd, 
\eqref{eqn:Psi2k} gives:
\[
\Psi_{2k}(u)- 2= ( {\lambda^{1/2}-\lambda^{-1/2}}  )^2\left( \dfrac{ \lambda^{k/2}-\lambda^{-k/2}  }{\lambda^{1/2}-\lambda^{-1/2}}\right)^2=
(u-2)\Phi_{k}(u) ^2 .
\]
When $k$ is even:
\[
\Psi_{2k}(u)- 2= ( {\lambda-\lambda^{-1}}  )^2\left( \dfrac{ \lambda^{k/2}-\lambda^{-k/2}  }{\lambda-\lambda^{-1}}\right)^2=
(u^2-4)\Phi_{k}(u) ^2  , 
\]
which concludes the proof of \eqref{eqn:psi2kminus2}.
\end{proof}

\begin{proof}[Proof of Lemma~\ref{lemma:phipsi}]
Let $k=p^r k'$ be as in the statement of the lemma. The main idea is to use the 
identity
$$
\lambda^{k/2}\pm \lambda^{-k/2}\equiv  (\lambda^{k'/2}\pm\lambda^{-k'/2})^{p^r}\mod p
$$
and to adapt it to each case. For instance for $k$ odd:
\begin{multline*}
\Phi_k( u ) =\frac{ \lambda^{k/2}- \lambda^{-k/2} }{\lambda^{1/2}-\lambda^{-1/2}}
 \equiv
 \frac{  (\lambda^{k'/2}-\lambda^{-k'/2})^{p^r}  }{ (\lambda^{1/2}-\lambda^{-1/2})^{p^r} }
 { (\lambda^{1/2}-\lambda^{-1/2})^{p^r-1} }
 \\
 = \Phi_{k'}( u )^{p^r}  ( u-2)^{( p^r-1   )/2}.
\end{multline*}
A similar trick applies to $\Phi_k$ for $k$ even and to $\Psi_k$ for $k$ either 
even or odd.
\end{proof}

Other properties are $\deg(\Phi_k)=\deg(\Psi_k)=(k-1)/2$ for $k$ odd, and 
$\deg(\Phi_k)= k/2-1$ and $\deg(\Psi_k)=k/2$ for $k$ even.

These polynomials are easy to compute if we take into account the recursive 
relations, as polynomials in $\ZZ[x]$:
\[
 \Phi_k=x \Phi_{k-2}- \Phi_{k-4}
 \qquad\textrm{ and }\qquad
 \Psi_k=x \Psi_{k-2}- \Psi_{k-4},
\]
or:
\[
\begin{cases}
  \Phi_{2k}=\Phi_{2k-1} -\Phi_{2k-2} \\
  \Phi_{2k+1}=(x+2)\Phi_{2k} -\Phi_{2k-1} 
\end{cases}
\quad
\textrm{ and }\quad
\begin{cases}
 \Psi_{2k}=(x+2)\Psi_{2k-1} -\Psi_{2k-2}\\
  \Psi_{2k+1}=\Psi_{2k}-\Psi_{2k-1} 
\end{cases}
%
\]
with
$\Phi_1=\Phi_2=\Psi_1=1$  and $\Psi_2=x$. 
Here are the polynomials for  $k$ up to 10:
\begin{align*}
 \Phi_1&=1   & \Psi_1&=1 \\
 \Phi_2&=1   & \Psi_2&=x \\
  \Phi_3&=x+1   & \Psi_3&=x-1 \\
   \Phi_4&=x   & \Psi_4&=x^2-2 \\
 \Phi_5&=x^2+x-1   & \Psi_5&=x^2-x-1 \\
 \Phi_6&=x^2-1   			& \Psi_6&=x^3-3x \\
 \Phi_7&={x}^{3}+{x}^{2}-2\,x-1   	 & \Psi_7&= {x}^{3}-{x}^{2}-2\,x+1 \\
 \Phi_8&= {x}^{3}-2\,x  		& \Psi_8&= {x}^{4}-4\,{x}^{2}+2 \\
 \Phi_9&= {x}^{4}+{x}^{3}-3\,{x}^{2}-2\,x+1  & \Psi_9&= {x}^{4}-{x}^{3}-3\,{x}^{2}+2\,x+1 \\
 \Phi_{10}&= {x}^{4}-3\,{x}^{2}+1 	& \Psi_{10}&= {x}^{5}-5\,{x}^{3}+5\,x
\end{align*}

\end{appendices}


%
%
%
%
%
\begin{footnotesize}
\bibliographystyle{plain}

%
%
%
%
\end{footnotesize}

\medskip

\textsc{Aix-Marseille Univ, CNRS, Centrale Marseille, I2M, UMR 7373, 13453 Marseille, France}

\texttt{luisa.paoluzzi@univ-amu.fr}

\medskip

\textsc{Departament de Matem\` atiques, Universitat Aut\` onoma de Barcelona, 08193 Cerdanyola del Vall\` es, Spain, and BGSMath}

\texttt{porti@mat.uab.cat}

\end{document}